\theoremstyle{plain}
\newtheorem {lemma}{Lemma}
\newtheorem {proposition}[lemma]{Proposition}
\newtheorem {theorem}[lemma]{Theorem}
\newtheorem {corollary}[lemma]{Corollary}
\theoremstyle{definition}
\newtheorem {definition}[lemma]{Definition}
\newtheorem {example}[lemma]{Example}
\newtheorem {question}[lemma]{Question}
\newcommand{\N}{\mathbb{N}}
\newcommand{\End}{\operatorname{End}}
\newcommand{\Ann}{\operatorname{ann}}
\newcommand{\id}{\operatorname{id}}
\newcommand{\tg}{\operatorname{tag}}
\newcommand{\mot}{\operatorname{st}}
\newcommand{\Path}{\operatorname{Path}}
\newcommand{\Ob}{\operatorname{Ob}}
\newcommand{\op}{\operatorname{op}}
\newcommand{\vc}{\operatorname{Vec}}
\newcommand{\Rep}{\operatorname{Rep}}
\newcommand{\Hom}{\operatorname{Hom}}
\DeclareMathOperator{\len}{len}
\DeclareMathOperator{\tagg}{tag}
\DeclareMathOperator{\RG}{\mathbf{RG}}
\def\star{{\rm St}}
\def\op{{\rm op}}
\newcommand{\Modd}{\operatorname{Mod}}
\newcommand{\nb}{\operatorname{nb}}
\tikzset{
  my label/.style={font=\scriptsize,inner sep=2pt},
  a/.style={my label,above,node contents={$\mathtt{e}$}},
  b/.style={my label,right,node contents={$\mathtt{f}$}},
  c/.style={my label,above,node contents={$\mathtt{g}$}},
}
\newcommand\caley[5]{
  \ifthenelse{0<#1}{
    \pgfmathtruncatemacro\newlev{#1-1}
    \pgfmathtruncatemacro\len{#2}
    \ifthenelse{#1>2}{
    \draw[draw=black,-latex] (0,0) -- +(30:\len pt) node[pos=.6,#3] coordinate (O);
}
 {  
    \draw[draw=black,-latex] (0,0) -- +(30:\len pt)  coordinate (O);
}
    \begin{scope}[shift={(O)}]
     \begin{scope}[rotate=0]  \caley{\newlev}{\len/2}{#3}{#4}{#5} \end{scope}

     \begin{scope}[rotate=60] \caley{\newlev}{\len/2}{#4}{#5}{#3} \end{scope}
       
      \begin{scope}[rotate=-60] \caley{\newlev}{\len/2}{#5}{#5}{#3} \end{scope}

    \end{scope}
  }
  {\fill[red] circle(1pt);}
}
\begin{document}

\title[Irreducible representations of Leavitt algebras]{Irreducible representations of Leavitt algebras}

\author{Roozbeh Hazrat}
\address{Hazrat: 
Centre for Research in Mathematics\\
Western Sydney University\\
Australia} \email{r.hazrat@westernsydney.edu.au}

\author{Raimund Preusser}
\address{Preusser: Chebyshev Laboratory, St. Petersburg State University, Russia} \email{raimund.preusser@gmx.de}

\author{Alexander Shchegolev}
\address{Shchegolev: Department of Mathematics, St. Petersburg State University, Russia} \email{iRyoka@gmail.com}

\subjclass[2020]{16S88}
\keywords{Leavitt path algebra, Weighted Leavitt path algebra, Irreducible representation}

\maketitle

\begin{abstract}

For a weighted graph $E$, we construct  representation graphs $F$, and consequently, $L_K(E)$-modules $V_F$, where  $L_K(E)$ is the Leavitt path algebra associated to 
$E$, with coefficients in a field $K$. We characterise representation graphs $F$ such that $V_F$ are simple $L_K(E)$-modules. We show that the category of representation graphs of $E$, $\RG(E)$, is a disjoint union of subcategories, each of which contains a unique  universal object $T$ which gives an indecomposable $L_K(E)$-module $V_T$ and a unique irreducible representation graph $S$, which gives a simple $L_K(E)$-module $V_S$.

Specialising to graphs with one vertex and $m$ loops of weight $n$, we construct irreducible representations for the celebrated Leavitt algebras $L_K(n,m)$. 
On the other hand, specialising to graphs with weight one, we recover the simple modules of Leavitt path algebras constructed by Chen via infinite paths or sinks and give a large class of non-simple indecomposable modules.  
\end{abstract}

\tableofcontents

\section{Introduction}

In a series of papers~\cite{vitt56,vitt57,vitt62}, William Leavitt studied algebras that are now denoted by $L_K(n,n+k)$ and have been coined {\it Leavitt algebras}. Let $X=(x_{ij})$ and $Y=(y_{ji})$ be $n\times (n+k)$ and $(n+k)\times n$ matrices consisting of symbols $x_{ij}$ and 
$y_{ji}$, respectively. Then for a field $K$, $L_K(n,n+k)$ is the unital $K$-algebra generated by all $x_{ij}$ and $y_{ji}$ subject to the relations $XY=I_n$ and $YX=I_{n+k}$.  Leavitt  established that these algebras are of type $(n,k)$. Recall that a ring $A$ is of type $(n,k)$ if $n$ and $k$ are the least positive integers such that $A^n\cong A^{n+k}$ as right $A$-modules. He further showed that   
$L_K(1,k+1)$ are (purely infinite) simple rings and $L_K(n,n+k)$, $n\geq 2$ are domains (see also~\cite{cohn66}). 

A {\it Leavitt path algebra}, first introduced in~\cite{AA,AMP},   is a certain quotient of the path algebra of a directed graph.  When the graph consists of one vertex,  $k+1$-loops, and their doubles, its Leavitt path algebra is $L_K(1,k+1)$.  The Leavitt path algebras $L_K(E)$ associated to the graphs $E$ with coefficients in the field $K$ in general and the algebras $L_K(1,k+1)$ in particular turned out to be a very rich and interesting class of algebras whose studies so far have constitute over 150 research papers and counting. A comprehensive treatment of the subject can be found in the book~\cite{AASbook}.

There have been a substantial number of papers devoted to  (irreducible) representations of a Leavitt path algebra $L_K(E)$, i.e., (simple) $L_K(E)$-modules. Starting with a path algebra, the celebrated theorem of Gabriel characterises graphs $E$ whose path algebras $KE$ have a finite number of isomorphism classes of indecomposable representations. For Leavitt path algebras, Ara and Brustenga~\cite{AB1,AB} studied their finitely presented modules, proving that the category of finitely presented modules over a Leavitt path algebra $L_K(E)$ is equivalent to a quotient category of the corresponding category of modules over the path algebra $KE$. A similar statement for graded modules over a Leavitt path algebra was established by Paul Smith~\cite{smith2}, allowing for making a bridge to study finite dimensional algebras with radical square zero.  On the other hand, by introducing the notion of 
branching systems for a graph $E$, Gon\c{c}alves, and Royer~\cite{GR} could construct representations for $L_K(E)$, providing sufficient conditions when these representations are faithful.

In \cite{C}, starting from a one-sided infinite path $p$ in a graph $E$, Chen considered the $K$-vector space $V_{[p]}$ with basis given by the infinite paths  tail equivalent to $p$. Observing that this vector space  is endowed with a natural action of $L_K(E)$, he proved that $V_{[p]}$ is a simple $L_K(E)$-module. A similar construction was also given for paths ending on a sink vertex.

 Numerous work followed, noteworthy the work of Ara-Rangaswamy and Rangaswamy~\cite{R-2,R-3,ARa}  producing new simple modules associated to infinite emitters and characterising those algebras which have countably (finitely) many distinct isomorphism classes of simple  modules.  Abrams, Mantese and Tonolo~\cite{AMT} studied the projective resolutions for these simple modules. 
The recent work of \'Anh and Nam \cite{nam} provides another way to describe the so-called Chen and Rangaswamy simple modules. We note that these simple modules can also be recovered via the setting of Steinberg algebras (see ~\cite{ahls,nguyen,stein}). 

The algebras $L_K(n,n+k)$, for  $n> 1$, $k\geq 0$, can not be obtained via  Leavitt path algebras. For this reason, the  {\it weighted Leavitt path algebras} were introduced in~\cite{H-1} which give $L_K(n,n+k)$ for a weighted graph with one vertex and $n+k$ loops of weight $n$ (Example~\ref{wlpapp}). If the weights of all the edges are $1$ (i.e., the graph is unweighted), then the weighted Leavitt path algebras reduce to the usual Leavitt path algebras (Example~\ref{exex1}). 

 In this note for a weighted graph $E$ we construct a representation graph $F$, and consequently a representation $V_F$ for the weighted Leavitt path algebra $L_K(E)$. We characterise the representation graphs $F$ such that $V_F$ are irreducible representations, i.e, they are simple $L_K(E)$-modules. A  graph $F$ is a representation graph for $E$, if there is a graph homomorphism $F\rightarrow E$ such that, roughly speaking,  each edge of $E$ `uniquely' lifts to $F$ (Definition~\ref{defwp}). Then $V_F$ is generated as a $K$-vector space by vertices of $F$ as the basis  and the action of $L_K(E)$  on each vertex $v$ is uniquely determined by moving $v$ on the graph $F$ along the edges determined by the monomials of $L_K(E)$.

 Specialising to graphs with one vertex and $m$ loops of weight $n$, we can construct irreducible representations for the classical Leavitt algebras $L_K(n,m)$.  As an example, the  algebra $L_K(2,3)$ can be obtained as a Leavitt path algebra of a weighted graph with one vertex and two loops of weight $3$: \[\xymatrix{ v\ar@(dr,ur)_{\mathtt{e_1,e_2,e_3}}\ar@(dl,ul)^{\mathtt{f_1,f_2,f_3}}}.\] 
 The representation graph $F$ below gives rise to a simple $L(2,3)$-module $V_F$, as follows: 
 \begin{equation}\label{wlpa3}
  \xymatrix@=15pt{ F: \,\,&v_0 \ar@[blue]@(l,u)^{e_1}  \ar@[red]@(d,l)^{f_2}  \ar[r]_{f_3}  & v_1   \ar@[blue]@(ur,ul)_{e_1}    \ar@[blue]@/^1pc/[r]^{e_2}  \ar@[blue]@/^1.6pc/[rr]^{e_3} & v_2\ar@[red]@(dr,dl)^{f_1}    \ar@[red]@/_1pc/[r]_{f_2}  
 \ar@[red]@/_1.6pc/[rr]_{f_3} & v_3   \ar@[blue]@/^1pc/[r]^{e_1}  \ar@[blue]@/^1.6pc/[rr]^{e_2}  \ar@[blue]@/^2.3pc/[rrr]^{e_3} & 
 v_4  \ar@[red]@/_1pc/[r]_{f_1}  \ar@[red]@/_1.6pc/[rr]_{f_2}  \ar@[red]@/_2.3pc/[rrr]_{f_3} & v_5 \ar@[blue]@/^1pc/[rr]^{e_1}  \ar@[blue]@/^1.6pc/[rrr]^{e_2}   \ar@[blue]@/^2.3pc/[rrrr]^{e_3}
 & v_6    \ar@[red]@/_1pc/[rr]_{f_1}  \ar@[red]@/_1.6pc/[rrr]_{f_2}  \ar@[red]@/_2.3pc/[rrrr]_{f_3} & v_7   \ar@{.>}@/^2.3pc/[rrr]^{e_1} & v_8& v_9 & \dots}
 \end{equation}
Here  $V_F$ is a $K$-vector space with basis $\{ v_i \}_{i \in \mathbb N_0}$ with the action of $\mathtt{e_i,e_i^*}\in L_K(2,3)$ on $v_k$ defined by $v_k \mathtt{e_i} = r(e_i)$ and 
 $v_k \mathtt{e_i^*} = s(e_i)$ and similarly for $\mathtt{f_i}$'s. Here $\mathtt{e_i}$ and $v_k$, uniquely determine an edge $e_i$ in $F$ with $s(e_i)=v_k$ and thus $v_k\mathtt{e_i}$ slides $v_k$ along the edge $e_i$ to $r(e_i)$. Therefore $v_5\mathtt{e_3}=v_9$ and $v_6\mathtt{f_2^*}=v_4$.

On the other hand, specialising to graphs with weight one, we recover simple modules of Leavitt path algebras constructed by Chen via infinite paths. Our approach gives a completely new way to represent simple modules of these algebras. Besides being more visual, this approach allows for carrying calculus on these modules with ease. 

As an example, for the graph 
\begin{equation}\label{lustigdog}
\xymatrix{
 \bullet \ar@(u,r)^{\mathtt{e}} \ar@(ur,rd)^{\mathtt{f}} \ar@(r,d)^{\mathtt{g}} &
}
\end{equation}
the infinite paths $p=\mathtt{efg\,efg\dots}$ and $q=\mathtt{efef^2ef^3\dots}$, give rise to Chen simple $L_K(E)$-modules $V_{[p]}$ and $V_{[q]}$. Using our approach, the representation graphs $F$ and $G$ below give rise to simple $L_K(E)$-modules $V_F$ and $V_G$ such that $V_F\cong V_{[p]}$ and $V_G\cong V_{[q]}$. 
\begin{equation}\label{wlpa4}
\xymatrix@=10pt{
& && & &  & & & & &\\
& &  & & &v_6  \ar@{<.}[u] \ar@{<.}[ul] \ar@{<.}[ur] \ar[ddr]_{\textcolor{red}{f}} & &v_7  \ar@{<.}[u] \ar@{<.}[ul] \ar@{<.}[ur] \ar[ddl]^{\textcolor{red}{g}}&  \\
& F:& & & &  & & & & &\\
& & & v_5  \ar@{<.}[l] \ar@{<.}[ul] \ar@{<.}[dl] \ar[drr]^{\textcolor{brown}{f}} &&&  v_2  \ar@/^/[dr]^{\textcolor{blue}{f}} &&&v_8 \ar@{<.}[r] \ar@{<.}[ur]  \ar@{<.}[dr] \ar[lld]_{\textcolor{blue}{g}}& & &  \\
& & & & & v_1 \ar@/^/[ur]^{\textcolor{red}{e}} &&v_3 \ar@/^1.7pc/[ll]^{\textcolor{brown}{g}}  & & & & & & & &\\
& & & v_4  \ar@{<.}[l] \ar@{<.}[ul] \ar@{<.}[dl] \ar[urr]_{\textcolor{brown}{e}}&&&&&& v_9 \ar@{<.}[r] \ar@{<.}[ur] \ar@{<.}[dr]   \ar[llu]^{\textcolor{blue}{e}}& &  \\
& & & & &  & & & & &\\
}
\end{equation}
\begin{equation}\label{wlpa45}
\xymatrix@=10pt{
                   &&          \ar@{.>}[dr]            &  \ar@{.>}[d] &      \ar@{.>}[dr]                &   \ar@{.>}[d] &       \ar@{.>}[dr]                &   \ar@{.>}[d] &                      \ar@{.>}[dr]             &  \ar@{.>}[d] &     && \\
                & \ar@{.>}[ddr] &              \ar@{.>}[r]        &          \bullet  \ar[ddr]     &  \ar@{.>}[r] &  \bullet \ar[ddr]    &  \ar@{.>}[r] &     \bullet  \ar[ddr]                    &  \ar@{.>}[r]&      \bullet  \ar[ddr] && \ar@{.>}[ddr]\\
                   &&                      &&                      &&                       &&                               && &&\\
G:\, \, \, \ar@{.>}[rr] && \bullet \ar[rr]^{\textcolor{red}{e}}   &&  \bullet \ar[rr]^{\textcolor{red}{f}} &&   \bullet \ar[rr]^{\textcolor{red}{e}}  && \bullet \ar[rr]^{\textcolor{red}{f}}    && \bullet \ar[rr]^{\textcolor{red}{f}}&&  \ar@{.>}[rr]^{\textcolor{red}{e}} && \\
              &&                      &&                      &&                       &&                               && \\
                   & \ar@{.>}[uur]  &         \ar@{.>}[r]          &          \bullet   \ar[uur]      &  \ar@{.>}[r]    &  \bullet   \ar[uur]    &  \ar@{.>}[r]   &    \bullet     \ar[uur]                     &  \ar@{.>}[r]  &      \bullet  \ar[uur] &&  \ar@{.>}[uur] \\
                    &&            \ar@{.>}[ur]              &  \ar@{.>}[u]&            \ar@{.>}[ur]              &  \ar@{.>}[u]&             \ar@{.>}[ur]              &  \ar@{.>}[u]&                  \ar@{.>}[ur]                & \ar@{.>}[u]&  &&\\
}
\end{equation}
Here $V_F$ is the $K$-vector space with basis $\{v_i \mid i \in \mathbb N\}$ and the action of edges slides the vertices along the graph $F$ such as 
$v_1 \mathtt{efg}=v_1$ and $v_9 \mathtt{egef^*}=v_6$.


We will next study the functor from the category of representation graphs of the graph $E$ (see~\S\ref{catrepa}) to the category of (right) $L_K(E)$-modules, arising from our construction: 
\begin{align*}
V: \RG(E) &\longrightarrow \Modd L_K(E),\\
(F,\phi) & \longmapsto V_F.\notag
\end{align*}

We show that $\RG(E)$ can be written as a disjoint union of certain subcategories, each of which contains a unique universal representation and a unique irreducible representation of $E$, up to isomorphism. We show that the unique universal representation $T$ of each of these subcategories gives an indecomposable $L_K(E)$-module $V_T$, whereas the irreducible representation $S$, gives a simple $L_K(E)$-module $V_S$.

Next in Section~\ref{branchhonda} we describe branching systems for weighted graphs and show how the representation graphs give rise to examples of branching systems. Branching systems for Leavitt path algebras were systematically studied by Gon\c{c}alves and Royer (\cite{GR,GR-0,GR-1}).

As expounded by Green~\cite{green}, one can describe the module category of a certain class of quotient path algebras $A_K(E,r):= KE/\langle r \rangle$, where $KE$ is the path algebra of the finite graph $E$ with coefficients in the field $K$ and $r$ is a set of certain relations,  via  the following equivalence of categories
\begin{equation}\label{catgluts}
 \Modd \, A_K(E,r) \longrightarrow \Rep(E,r).
\end{equation}
Here $\Modd \, A_K(E,r)$ is the category of right $A_K(E,r)$-modules and $\Rep(E,r)$ is the category of representations of the graph $E$ with relations as described in~\cite{green}. The objects of the category $\Rep(E,r)$ consist of placing arbitrary $K$-vector spaces on the vertices of the graph $E$ and assigning linear transformations to the edges that satisfy the relations $r$ (see Appendix~\ref{appendaa}). 
Since (weighted) Leavitt path algebras are among this class, this gives a justification of why branching systems would give representations for (weighted) Leavitt path algebras. However it is not clear how this general machinery of (\ref{catgluts}) can be used to systematically describe irreducible or indecomposable representations of such algebras as the delicate case of acyclic graphs with no relations, which gives the celebrated Gabriel theorem of indecomposability, shows.

 As such we believe that the notion of representation graphs of this paper allows us, for the  first time, to produce irreducible and indecomposable representations for a wide class of algebras, such as Leavitt algebras $L_K(n,m)$.  
 

Throughout the paper $K$ denotes a field and $K^{\times}:=K\setminus\{0\}$. By a $K$-algebra we mean an associative (but not necessarily commutative or unital) $K$-algebra. The semigroup of positive integers is denoted by $\mathbb N$ and the monoid of non-negative integers by $\mathbb N_0$.

\section{Representation graphs of a given graph} 

A \emph{directed graph} $E$ is a tuple $(E^{0}, E^{1}, r, s)$, where $E^{0}$ and $E^{1}$ are
sets and $r,s$ are maps from $E^1$ to $E^0$.  We think of each $e \in E^1$ as an edge  pointing from vertex $s(e)$ to vertex $r(e)$.  In this paper all directed graphs are assumed to be {\it row-finite}, i.e. no vertex emits infinitely many edges. 

A {\it weighted graph}  is a directed graph $E$ equipped with a map  $w:E^1\rightarrow \N$. If $e\in E^1$, then $w(e)$ is called the {\it weight} of $e$. 
We write $(E,w)$ to emphasise that the graph is weighted. Throughout we develop our concepts in the setting of weighted graphs. When the weight map is the constant map $1$, i.e., $w(e)=1$ for any $e\in E^1$, we retrieve the notions in the classical case of directed graphs.  

The main notion introduced and studied in this paper in relation with the theory of Leavitt path algebras, is the notion of a representation graph of a given weighted graph. The concept is closely related to the theory of covering and immersions in graph theory~\cite{stallings,kp}. We start by recalling these notions in the setting of weighted graphs. 

%

\subsection{Covering and immersions} \label{hnhfgftgrgr}
For weighted graphs $E$ and $G$, a \emph{weighted graph homomorphism} $\phi:E\rightarrow  G$, consists of two maps $\phi^0:E^0\rightarrow  G^0$ and $\phi^1:E^1\rightarrow  G^1$ such that for any $e\in E^1$, $s(\phi^1(e))=\phi^0(s(e))$, $r(\phi^1(e))=\phi^0(r(e))$ and $w(\phi^1(e))=w(e)$. Namely, $\phi$ is a homomorphism of graphs which preserve the weight. For a vertex $v\in E^0$, we define
\[w(v):=\max\{w(e)\mid e\in s^{-1}(v)\}.\] If $v$ is a sink we set $w(v)=0$.

\begin{definition}\label{defcovering}
Let $E$ and $T$ be weighted graphs and $\phi=(\phi^0,\phi^1):T\rightarrow E$ a homomorphism of weighted graphs. 

\begin{enumerate}
\item The pair $(T,\phi)$ is called an \emph{immersion} in $E$, if  for any $v\in T^0$, the map  $\phi: s^{-1}(v)\rightarrow s^{-1}(\phi^0(v))$ is injective. 

\medskip

\item The pair $(T,\phi)$ is called a \emph{covering} of $E$, if the following hold:
\smallskip
\begin{enumerate}[(i)]
\item The morphism $\phi$ is onto, i.e, $\phi^0$ and $\phi^1$ are surjective. 
\smallskip
\item For any $v\in T^0$, the map $\phi^1: r^{-1}(v)\rightarrow r^{-1}(\phi^0(v))$ and $\phi: s^{-1}(v)\rightarrow s^{-1}(\phi^0(v))$ are bijective. 
\end{enumerate}

\end{enumerate}
\end{definition}

Putting it another way, a weighted graph immersion or covering is a classical immersion or covering which preserves the weights.

Let $(E,w)$ be a weighted graph. The directed graph $\hat E=(\hat E^0, \hat E^1, \hat s, \hat r)$, where $\hat E^0=E^0$, $\hat E^1:=\{e_1,\dots,e_{w(e)}\mid e\in E^1\}$, $\hat s(e_i)=s(e)$ and $\hat r(e_i)=r(e)$, is called the {\it directed graph associated to $(E,w)$}. If $e_i\in \hat E^1$, then $\tg(e_i):=i$ is called the {\it tag} of $e_i$ and $\mot(e_i):=e$ is called the {\it structure edge} of $e_i$. There is a forgetful homomorphism 
\begin{align}\label{forhoma}
\hat E &\longrightarrow E,\\
u&\longmapsto u\notag\\
 e_i &\longmapsto e\notag
 \end{align} relating these two graphs. 
 
 It is easy to see that if $\phi:T\rightarrow E$ is an immersion or a covering of weighted graphs, so is the graph homomorphism $\hat \phi: \hat T \rightarrow \hat E$ defined by $\hat\phi(u)=\phi(u)$, $u\in \hat T$, and $\hat \phi(e_i):=\phi(e)_i$, $1\leq i \leq w(e)$.

We use the convention that a (finite) path $p$ in a weighted graph $E$ is either a single vertex $p=v\in E^0$ or a sequence $p=e_{1} e_{2}\cdots e_{n}$ of edges $e_{i}$ in $E$ such that
$r(e_{i})=s(e_{i+1})$ for $1\leq i\leq n-1$. We define $s(p) = s(e_{1})$, and $r(p) =
r(e_{n})$.  We denote by $\Path(E)$ the set of all finite paths in $E$. Moreover, if $u,v\in E^0$, then we denote by $_u\!\Path(E)$ the set of all finite paths starting in $u$, by $\Path_v(E)$ the set of all finite paths ending in $v$ and by $_u\!\Path_v(E)$ the intersection of $_u\!\Path(E)$ and $\Path_v(E)$.

Given a weighted graph $E$, we define the \emph{double graph} $E_d$ of $E$ as weighted graph with $E_d^0=E^0$, $E_d^1=\{e, e^* \mid e\in E^1\}$ with $w(e^*)=w(e)$, where $e^*$ has orientation the reverse of that of its counterpart $e\in E^1$  (see \cite[p. 6]{AASbook}). Note that for a weighted graph $E$, one can identify $(\hat E)_d$ and $\widehat{(E_d)}$. 

A path $p=x_1\dots x_n\in \Path(E_d)$ is called \emph{backtracking} if there is a $1\leq j\leq n-1$ such that $x_jx_{j+1}=ee^*$ or $x_jx_{j+1}=e^*e$ for some $e\in  E^1$. We say $p$ is \emph{reduced} if it is not backtracking. We also use the following convention: when we say $p$ is a \emph{reduced path in $E$}, we mean $p$ is not backtracking path of $E_d$. This is in line with literature in graph theory.

Let $E$ be a connected weighted graph. Fix a base point $v\in E^0$. The \emph{universal covering graph of $E$} is a directed weighted graph  $T = T(E, v)$ constructed as follows: let $T^0$ be the set of all reduced path in $E$ starting from $v$,  $T^1 = \{ (a,e) \in  T^0 \times  E^1 \mid r(a) = s(e) \}$ and put 
$s(a,e)=a$, $r(a,e)=ae$. Furthermore for $(a,e) \in T^1$ we set $w(a,e)=w(e)$ and identify $(a,e)^*$ with $(ae,e^*)$. If the weight of the graph is $1$, we obtain the classical case of universal covering. Similar to the classical case, one can show that $T$ is a tree and the isomorphism class of $T = T (E, v)$ is independent of the choice of base point $v$ (see~\cite{kp}). The notion of universal covers allows us to easily construct representation graphs for a given weighted graph (see Lemma~\ref{hfgfghhffeee} and Example~\ref{hfynvhfhr}).  We note that if $T$ is a universal cover of weighted graph $E$, $\hat T$ is not necessarily the universal cover for $\hat E$. 

For a connected (weighted) graph $E$, we denote by $\pi(E)$ its fundamental group (which is independent of the choice of base-point).
 We define a \emph{length map}  
\begin{align}\label{hfbvhdfhddds}
| \, |: \pi(\hat E) &\longrightarrow \mathbb Z^n,\\
 p &\longmapsto |p| \notag
\end{align}
where $n=\max\{w(e) \mid e\in E^1\}$, as follows: For  
 $\{ e_1,\dots e_{w(e)} \mid e \in E^1 \}$ and $\{ e_1^*,\dots e_{w(e)}^* \mid e \in E^1 \}$ and  $v \in E^0$, set $|v|=0$,  $|e_i|=(0,\dots,0,1,0,\dots)$ and $|e_i^*|=(0,\dots,0,-1,0,\dots) \in  \mathbb Z^n$, where $1\leq i\leq w(e)$ and 
$1$ and $-1$ are in the $i$-th component, respectively. One can extend this to a well-defined map (\ref{hfbvhdfhddds}) by counting the length of the path $p$  which is a homomorphism of groups. Note that $|\pi(\hat E)|=0$ if and only if for paths $p,q$ in $\hat E$ with $s(p)=s(q)$ and $r(p)=r(q)$ we have $|p|=|q|$.  This will be used to give a criterion when a representation graph gives rise to a graded representation (Theorem~\ref{gradedrepres}).

\subsection{Representation graphs} We are in a position to define the main notion of this paper, namely a representation graph of a given weighted graph $E$. Roughly, a graph $F$ is a representation graph of the graph $E$ if  ``locally'' $F$ covers all the structure edges arriving at a vertex and all the tags emitting from a vertex in $E$. For the next definition, recall that to a weighted graph $E$ one can associate a directed graph $\hat E$ (see \S\ref{hnhfgftgrgr}). 

\begin{definition}\label{defwp}
Let $(E,w)$ be a weighted graph. A pair $(F,\phi)$, where $F=(F^0,F^1,s_F,r_F)$ is a directed graph and  $\phi=(\phi^0,\phi^1):F\rightarrow \hat E$ is a homomorphism of directed graphs is called a  
{\it representation graph} of $E$, if  the following hold:
\begin{enumerate}
\item For any $v\in F^0$ and $1\leq i\leq w(\phi^0(v))$, there is precisely one $f\in s_F^{-1}(v)$ such that $\tg(\phi^1(f))=~i$;

\smallskip

\item For any $v\in F^0$ and $e\in r^{-1}(\phi^0(v))$, there is precisely one $f\in r_F^{-1}(v)$ such that $\mot(\phi^1(f))=e$.
\end{enumerate}
\end{definition}

In the definition above, using (\ref{forhoma}) we identify vertices of $\hat E$ with those of $E$ when needed.  Throughout the paper, we simply denote a representation graph $(F,\phi)$ of $(E,w)$ by $F$ if there is no cause for confusion in context. 
Some examples of representation graphs are given in Introduction~(\ref{wlpa3}) and (\ref{wlpa4}). 

Let $E$ be a weighted graph, $\hat E$ the directed graph associated to $E$ and $(F,\phi)$ a representation graph for $E$.  Let $\hat E_d$ and $F_d$ be the double graphs of $\hat E$ and $F$, respectively. Clearly the homomorphism $\phi :F\rightarrow \hat E$ induces a map $\Path(F_d)\rightarrow \Path(\hat E_d)$, which we also denote by $\phi$. We call two vertices $u,v\in F^0$ in $F$ {\it connected} if $ _u\!\Path_v(F_d)\neq\emptyset$.
A representation graph $F$ is called {\it connected} if any $u,v\in F^0$ are connected. This is equivalent to say that the graph $F$ is connected as an undirected graph. If $C$ is a connected component of $F$, then $(C,\phi|_C)$ is again a representation graph for $E$.

 
 
\begin{lemma}\label{lemwell} Let $E$ be a weighted graph and $(F,\phi)$ a representation of $E$ with the induced map $\phi: \Path(F_d)\rightarrow \Path(\hat E_d)$. 
Let $q, q'\in \Path(F_d)$ such that $\phi(q)=\phi(q')$. If $s(q)=s(q')$ or $r(q)=r(q')$, then $q=q'$.
\end{lemma}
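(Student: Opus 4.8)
The plan is to reduce to a single direction and then argue edge by edge from a common endpoint, at each step invoking exactly one of the two defining properties of a representation graph.

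First I would dispose of the case $r(q)=r(q')$ by duality. Reversing a path in the double graph sends $x_1\cdots x_n$ to $x_n^*\cdots x_1^*$, interchanges source and range, and satisfies $\phi(q^*)=\phi(q)^*$ (since the induced map on $F_d$ is defined by $\phi(f^*)=\phi^1(f)^*$). Hence the hypothesis $r(q)=r(q')$ together with $\phi(q)=\phi(q')$ becomes $s(q^*)=s(q'^*)$ together with $\phi(q^*)=\phi(q'^*)$, so it suffices to treat the case $s(q)=s(q')$; from $q^*=q'^*$ one then recovers $q=q'$. Next, because $\phi$ sends each edge of $F_d$ to a single edge of $\hat E_d$, carrying forward edges to forward edges and backward edges to backward edges and a path to the concatenation of the images of its edges, the equality $\phi(q)=\phi(q')$ forces $q$ and $q'$ to have the same length $n$ (both are single vertices, in which case $s(q)=s(q')$ already gives $q=q'$, or both have positive length) and $\phi(x_i)=\phi(x_i')$ for each $i$, where $q=x_1\cdots x_n$ and $q'=x_1'\cdots x_n'$.

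I would then prove $x_1\cdots x_i = x_1'\cdots x_i'$ by induction on $i$, the case $i=0$ being the hypothesis $s(q)=s(q')$. Writing $v$ for the common range $r(x_{i-1})=r(x_{i-1}')$ (or $v=s(q)$ when $i=1$), we have $s(x_i)=s(x_i')=v$ and $\phi(x_i)=\phi(x_i')$, and I split into two cases. If $\phi(x_i)$ is a forward edge $g=e_j\in\hat E^1$, then $x_i,x_i'\in s_F^{-1}(v)$ are forward edges, the index satisfies $1\le j\le w(\phi^0(v))$ because $\hat s(g)=s(\mot(g))=\phi^0(v)$ and $w(\mot(g))\le w(\phi^0(v))$, and both $x_i,x_i'$ have tag $\tg(\phi^1(x_i))=j$; so property (1) of Definition~\ref{defwp} forces $x_i=x_i'$. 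If instead $\phi(x_i)=g^*$ is a backward edge with $g=e_j$, then $x_i=f^*$ and $x_i'=f'^*$ with $f,f'\in r_F^{-1}(v)$, we have $\mot(g)\in r^{-1}(\phi^0(v))$ because the source of $g^*$ is $\hat r(g)=r(\mot(g))=\phi^0(v)$, and $f,f'$ share the structure edge $\mot(\phi^1(f))=\mot(g)$; so property (2) forces $f=f'$ and hence $x_i=x_i'$. Completing the induction yields $q=q'$.

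The main obstacle is not any single hard computation but the bookkeeping in the case split: one must check that the source/range conventions in $F_d$ and $\hat E_d$ align so that $1\le j\le w(\phi^0(v))$ holds in the forward case and the structure edge lies in $r^{-1}(\phi^0(v))$ in the backward case, for these are precisely the hypotheses that make conditions (1) and (2) applicable. Once these are verified, the ``precisely one'' clauses in the definition of a representation graph deliver the conclusion immediately.
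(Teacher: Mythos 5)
Your proof is correct and follows essentially the same strategy as the paper's: an edge-by-edge induction along the path, invoking the two uniqueness clauses of Definition~\ref{defwp}, with the other endpoint case reduced by reversal using $\phi(q^*)=\phi(q)^*$. The only difference is cosmetic — the paper inducts from the common range and derives the common-source case by duality, whereas you induct from the common source and derive the common-range case by duality.
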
 
\begin{proof}
First suppose that $r(q)=r(q')=v$. If one of the paths $q$ and $q'$ is trivial (i.e., is a vertex), then the other must also be trivial and we have $q=v=q'$ as desired. Assume now that $q$ and $q'$ are not trivial. Then $q=x_n\dots x_1$ and $q'=y_n\dots y_1$, for some $n\geq 1$ and $x_1,\dots,x_n, y_1,\dots, y_n\in F_d^1$. We proceed by induction on $n$.

Case $n=1$: Suppose $\phi(x_1)=\phi(y_1)=e_i$ for some $e\in E^1$ and $1\leq i\leq w(e)$. It follows from Definition \ref{defwp}(2) that $x_1=y_1$ and hence $q=q'$. Suppose now that $\phi(x_1)=\phi(y_1)=e_i^*$ for some $e\in E^1$ and $1\leq i\leq w(e)$. Then it  follows from Definition \ref{defwp}(1) that $x_1=y_1$ and hence $q=q'$.

Case $n\to n+1$:
Suppose that $q=x_{n+1}\dots x_1$ and $q'=y_{n+1}\dots y_1$. By the inductive assumption we have $x_i=y_i$ for any $1\leq i\leq n$. It follows that $s_{F_d}(x_n)=s_{F_d}(y_n)=:u$. Hence $x_{n+1}, y_{n+1}\in \Path_u(F_d)$. Now we can apply the case $n=1$ and obtain $x_{n+1}=y_{n+1}$.

Now suppose that $s(q)=s(q')$. Then $r(q^*)=r((q')^*)$. Since clearly $\phi(q^*)=\phi(q)^*=\phi(q')^*=\phi((q')^*)$, we obtain $q^*=(q')^*$. Hence $q=q'$.
\end{proof}

Since the notion of $\Path(F)$ of a graph $F$ plays a prominent role, we remark that in the language of category Lemma~\ref{lemwell} takes on a familiar form. Recall that  a functor $F : \mathcal{C}\rightarrow \mathcal{D}$ between two small categories $\mathcal{C}$ and $\mathcal{D}$ is called \emph{star injective}, if the map \[F |_{\star(x)} : \star(x) \longrightarrow \star(F(x)),\] is injective, where 
\begin{equation}\label{finalclunt}
\star(x) =\{f : x\rightarrow y \text{~a morphism in~} \mathcal{C} \;|\; y \in \mathcal{C}\},
\end{equation} for every object $x \in \mathcal{C}$. Similarly $F$ is called \emph{co-star injective} if 
$F |_{\star(x)^\op}$ is injective, where $\star(x)^\op =\{f : y\rightarrow x \text{~a morphism in~} \mathcal{C} \;|\; y \in \mathcal{C}\}$. 

 One can consider a graph $F$ as a category with vertices as objects and paths as morphisms.  Then the notion $\Path(F)$ represents the morphisms of the category $F$. A graph homomorphism  $\phi:F \rightarrow G$ gives rise to a functor, called $\phi$ again, $\phi:F \rightarrow G$ between the categories $F$ and $G$. With this interpretation, Lemma~\ref{lemwell} states that the functor $\phi: F_d\rightarrow \hat E_d$ is 
 star and co-star injective.

The next lemma shows that any representation for a covering graph induces a representation for the graph as well. This allows us to construct many representations for a given graph (see Example~\ref{hfynvhfhr}). 

\begin{lemma}\label{hfgfghhffeee}
Let $E$ be a weighted graph and  $T$ a covering of $E$. Then any representation graph $F$ of $T$ is a representation graph for $E$. On the other hand, if $F$ is a representation of $E$ and $G$ is a covering of $F$ then $G$ is also a representation of $E$. 
\end{lemma}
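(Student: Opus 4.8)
The plan is to prove both halves by the same device: in each case one is handed a structure map exhibiting a representation graph (landing in $\hat T$, resp.\ arising from a covering onto $F$), and one composes it with a suitable map into $\hat E$, then verifies the two axioms~(1) and~(2) of Definition~\ref{defwp} directly. The mechanism that makes the axioms transport is that a covering induces bijections on the in- and out-neighbourhoods $r^{-1}(v)$ and $s^{-1}(v)$ of every vertex (this is essentially the star/co-star injectivity discussed after Lemma~\ref{lemwell}), together with the compatibility of the construction $E\mapsto \hat E$ with coverings already recorded in~\S\ref{hnhfgftgrgr}.

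For the first statement, write $\psi\colon T\to E$ for the covering and $\phi\colon F\to \hat T$ for the map exhibiting $F$ as a representation graph of $T$, and set $\phi':=\hat\psi\circ\phi\colon F\to\hat E$, where $\hat\psi\colon \hat T\to\hat E$ is the induced covering. I would first record three elementary facts: $\hat\psi$ preserves tags, $\tg(\hat\psi(e_i))=i$; it carries structure edges along $\psi$, $\st(\hat\psi(e_i))=\psi(\st(e_i))$; and $\psi$ preserves vertex weights, $w(\psi^0(u))=w(u)$ for $u\in T^0$ (immediate from the weight-preserving bijection $s^{-1}(u)\to s^{-1}(\psi^0(u))$, taking maxima). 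Granting these, axiom~(1) for $(F,\phi')$ at a vertex $v$ becomes literally axiom~(1) for $(F,\phi)$: the admissible range $1\le i\le w((\phi')^0(v))$ coincides with $1\le i\le w(\phi^0(v))$, and $\tg((\phi')^1(f))=\tg(\phi^1(f))$.

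The delicate point — and the step I expect to be the main obstacle — is axiom~(2) for $(F,\phi')$. Given $v\in F^0$ and an edge $e\in r^{-1}((\phi')^0(v))$ of $E$, I would use surjectivity and, crucially, injectivity of the range-fibre bijection $\psi\colon r^{-1}(\phi^0(v))\to r^{-1}(\psi^0\phi^0(v))$ to produce the unique lift $\tilde e\in r^{-1}(\phi^0(v))$ of $e$. Since any $f\in r_F^{-1}(v)$ has $\st(\phi^1(f))$ lying in $r^{-1}(\phi^0(v))$ (because $r(\phi^1(f))=\phi^0(v)$), the identity $\st((\phi')^1(f))=\psi(\st(\phi^1(f)))$ together with injectivity of $\psi$ on this fibre shows $\st((\phi')^1(f))=e$ if and only if $\st(\phi^1(f))=\tilde e$. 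Hence the ``precisely one $f$'' demanded by axiom~(2) for $(F,\phi')$ at $e$ is exactly the one provided by axiom~(2) for $(F,\phi)$ at $\tilde e$, and the first statement follows.

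For the second statement, write $\pi\colon G\to F$ for the covering and $\phi\colon F\to\hat E$ for the representation graph structure map, and set $\phi'':=\phi\circ\pi$; here no hat-construction is needed. Because $\pi$ is a covering, it restricts to bijections $s_G^{-1}(v)\to s_F^{-1}(\pi^0(v))$ and $r_G^{-1}(v)\to r_F^{-1}(\pi^0(v))$ for every $v\in G^0$, while $\tg((\phi'')^1(g))$ and $\st((\phi'')^1(g))$ depend on $g$ only through $\pi^1(g)$. Consequently the set of $g\in s_G^{-1}(v)$ with a prescribed tag $i$ (resp.\ of $g\in r_G^{-1}(v)$ with prescribed structure edge $e$) maps bijectively, via $\pi$, onto the corresponding set over $\pi^0(v)$ in $F$; since the latter is a singleton by axioms~(1) and~(2) for $(F,\phi)$, so is the former. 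This verifies~(1) and~(2) for $(G,\phi'')$ and completes the proof. Both arguments reduce to routine bookkeeping once the neighbourhood bijections are in place; the only genuinely careful step is the matching of $e$ with its lift $\tilde e$ in the first statement.
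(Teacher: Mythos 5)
Your proposal is correct and follows essentially the same route as the paper: compose with the induced covering $\hat\psi:\hat T\to\hat E$, transport axiom (1) via tag preservation, and transport axiom (2) by lifting $e$ through the bijection on range fibres to the unique $\tilde e$ (the paper's $e'$). The second half, which the paper leaves to the reader, is also handled correctly by your neighbourhood-bijection argument.
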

\begin{proof}
Let $\psi: T\rightarrow E$ be a covering map and $\phi:F\rightarrow \hat T$ a representation for $T$. It is easy to see that there is a covering of unweighted graphs $\hat \psi:\hat T \rightarrow \hat E$ which respects the tags, i.e., if $\psi(t)=e$ then $\hat \psi(t_i)=e_i$, for $1\leq i \leq w(t)=w(e)$.  Consider the following diagram, where $\sigma$ is the graph homomorphism $\hat \psi\phi$.  
\[\xymatrix{
F \ar[r]^{\phi} \ar@{.>}[rd]_{\sigma}& \hat T\ar[d]^{\hat \psi}\\ 
& \hat E}
\]
We check that $(F,\sigma)$ is a representation graph for $E$.

Let $v\in F^0$, and $1\leq i\leq w(\sigma^0(v))$. Since $\phi$ is a representation of $T$, by Definition~\ref{defwp}(1), there is a  unique $f\in s_F^{-1}(v)$ such that $\tg(\phi^1(f))=i$. 
Since $\hat \psi$ is a covering, there is a bijection $s^{-1}(\phi(v))\rightarrow s^{-1}(\sigma(v))$, preserving tags.   Thus $f$ is also the unique edge such that  $\tg(\sigma^1(f))=i$. 

To see the second condition of a representation graph for the morphism $\sigma$, let $v\in F^0$ and $e\in r^{-1}(\sigma^0(v))$. Since $\psi$ is a covering, we have a bijection $\psi: r^{-1}(\phi(v))\rightarrow r^{-1}(\sigma(v))$. Thus there is a unique $e'\in r^{-1}(\phi(v))$ such that $\psi(e')=e$. By Definition~\ref{defwp}(2) for the representation $\phi$,  there is precisely one $f\in r_F^{-1}(v)$ such that $\mot(\phi^1(f))=e'$ and consequently  $\mot(\hat \psi \phi^1(f))=e$.

The second part of the lemma is easy and we leave it to the reader. 
\end{proof}


\begin{example}\label{hfynvhfhr}
Let $E$ be a weighted graph with one vertex, three loops of weight three.
\begin{equation*}
\xymatrix{
E: & \bullet \ar@(u,r)^{\mathtt{e_1,e_2,e_3}} \ar@(ur,rd)^{\mathtt{f_1,f_2,f_3}} \ar@(r,d)^{\mathtt{g_1,g_2,g_3}} &
}
\end{equation*}
The universal cover of the weighted graph $E$ resembles the Cayley graph of the free group $\mathbb F_3$ with three generators (note that in comparison, the universal cover of $\hat E$ is the Cayley graph of the free group $\mathbb F_9$).  Below we show the part of the universal cover $T$ generated by $\mathtt{e},\mathtt{f}$ and $\mathtt{g}$. For each node, choosing $e_1,f_2,g_3$ to emit and the same arrows to arrive at the node, they satisfy Conditions (1) and (2) of Definition~\ref{defwp} of the representation graphs and thus give a representation graph $F$ for $T$ and consequently one for $E$.


\noindent
\begin{minipage}{.5\textwidth}
\centering
\begin{tikzpicture}[scale=0.6]
  \begin{scope}[rotate=0]   \caley{5}{4cm}{a}{b}{c} \end{scope}
 \begin{scope}[rotate=60]  \caley{5}{4cm}{b}{c}{a} \end{scope}
\begin{scope}[rotate=-60] \caley{5}{4cm}{c}{b}{a} \end{scope}
 \end{tikzpicture}

\end{minipage}%
\begin{minipage}{.5\textwidth}
\centering
\begin{tikzpicture}[scale=0.6]
  \draw[draw=black,-latex, thick] (0,0) -- node[above,xshift=0.2cm] {$f_2$} +(90:2);
  \draw[draw=black,-latex, thick] (0,0) -- node[above,xshift=0.7cm] {$e_1$} +(30:2);
    \draw[draw=black,-latex, thick] (0,0) -- node[yshift=-0.2cm, xshift=0.8cm] {$g_3$} +(-30:2);
    \node [red] at (0,0) {\textbullet};

    \draw[draw=black,  <-, thick] (-0.1,-3) -- node[above,xshift=0.1cm] {$f_2$} +(-180:2);
  \draw[draw=black, <-, thick ] (0,-3.2) -- node[above, xshift=-0.8cm, yshift=-.85cm] {$e_1$} +(-90:2);
    \draw[draw=black, <-, thick] (0,-3.1) -- node[yshift=-0.2cm, xshift=0.7cm] {$g_3$} +(-130:2);
     \node [red] at (0,-3) {\textbullet}; 

\end{tikzpicture}

\end{minipage}


\end{example}


We will see in Section~\ref{weightedrbgh} that any  representation graph such as $F$ in Example~\ref{hfynvhfhr} will give a module $V_F$ for the weighted Leavitt path algebra $L_K(E)$. However $V_F$ in this example is not a simple module. Next we characterise those representations that the associated module is simple.

\begin{definition}\label{defrepirres}
Let $E$ be a weighted graph and $(F,\phi)$ a representation graph for $E$ with the induced map $\phi: \Path(F_d)\rightarrow \Path(\hat E_d)$. 
We say  $F$ is an {\it irreducible representation} for $E$ if the following  equivalent conditions hold. 

\begin{enumerate}

\item The graph  $F$ is connected and 
\[\phi(\Path_u(F_d))\neq \phi(\Path_v(F_d)), \text{ for any } u\neq v\in F^0.\]

\item The graph  $F$ is connected and 
\[\phi({}_u\!\Path(F_d))\neq \phi({}_v\!\Path(F_d)), \text{ for any } u\neq v\in F^0.\]

\end{enumerate}

\end{definition}

Note that the condition of irreducibility in Definition~\ref{defrepirres}  in the categorical language (see (\ref{finalclunt})) amounts to saying that for $u\not = v\in F^0$, $\phi(\star(u)) \not = \phi(\star(v))$. 

\begin{example}
Consider the representation graph $H$ below of the graph $E$ with one vertex and three loops (\ref{lustigdog}).  One can check that the conditions of Definition~\ref{defrepirres}  are not satisfied and therefore $H$ is not an irreducible representation for $E$. On the other hand the representation graphs  (\ref{wlpa4}) and (\ref{wlpa45}), respectively, are irreducible. 
\begin{equation}\label{wlpa456}
\xymatrix@=10pt{
                   &&          \ar@{.>}[dr]            &  \ar@{.>}[d] &      \ar@{.>}[dr]                &   \ar@{.>}[d] &       \ar@{.>}[dr]                &   \ar@{.>}[d] &                      \ar@{.>}[dr]             &  \ar@{.>}[d] &     && \\
                & \ar@{.>}[ddr] &              \ar@{.>}[r]        &          \bullet  \ar[ddr]     &  \ar@{.>}[r] &  \bullet \ar[ddr]    &  \ar@{.>}[r] &     \bullet  \ar[ddr]                    &  \ar@{.>}[r]&      \bullet  \ar[ddr] && \ar@{.>}[ddr]\\
                   &&                      &&                      &&                       &&                               && &&\\
H:\, \, \, \ar@{.>}[rr] && \bullet \ar[rr]^{\textcolor{red}{e}}   &&  \bullet \ar[rr]^{\textcolor{red}{e}} &&   \bullet \ar[rr]^{\textcolor{red}{e}}  && \bullet \ar[rr]^{\textcolor{red}{e}}    && \bullet \ar[rr]^{\textcolor{red}{e}}&&  \ar@{.>}[rr]^{\textcolor{red}{e}} && \\
              &&                      &&                      &&                       &&                               && \\
                   & \ar@{.>}[uur]  &         \ar@{.>}[r]          &          \bullet   \ar[uur]      &  \ar@{.>}[r]    &  \bullet   \ar[uur]    &  \ar@{.>}[r]   &    \bullet     \ar[uur]                     &  \ar@{.>}[r]  &      \bullet  \ar[uur] &&  \ar@{.>}[uur] \\
                    &&            \ar@{.>}[ur]              &  \ar@{.>}[u]&            \ar@{.>}[ur]              &  \ar@{.>}[u]&             \ar@{.>}[ur]              &  \ar@{.>}[u]&                  \ar@{.>}[ur]                & \ar@{.>}[u]&  &&\\
}
\end{equation}

\end{example}

\subsection{The category of the representation graphs of a given graph}\label{catrepa} 

Let $E$ be a weighted graph. Denote by $\RG(E)$  the category of nonempty, connected representation graphs for $E$. A morphism $\alpha:(F,\phi)\to (G,\psi)$ in $\RG(E)$ is a homomorphism $\alpha:F\to G$ of directed graphs such that $\psi\alpha=\phi$. One checks easily that a morphism $\alpha:(F,\phi)\to (G,\psi)$ is an isomorphism if and only if $\alpha^0$ and $\alpha^1$ are bijective.  Lemma~\ref{hfgfghhffeee} states that if $T\rightarrow E$ is a covering of the weighted graph $E$, then we have a natural functor $\RG(T) \rightarrow \RG(E)$. 

In this section we show that the category $\RG(E)$ is a disjoint union of certain subcategories, each of which contains a unique universal representation and a unique irreducible representation of $E$. In \S\ref{weightedrbgh} we will show that the universal representations of $E$ give us indecomposable $L_K(E)$-modules, whereas irreducible representations of $E$ give rise to simple $L_K(E)$-modules. 

We need to establish several lemmas which allows us to define equivalence relations on representations. 

\begin{lemma}\label{lempath}
Let $(F,\phi)$ and $(G,\psi)$ be objects in $\RG(E)$. Let $u\in F^0$ and $v\in G^0$. If $\phi({}_u\!\Path(F_d))\subseteq \psi({}_v\!\Path(G_d))$, then $\phi({}_u\!\Path(F_d))=\psi({}_v\!\Path(G_d))$.
\end{lemma}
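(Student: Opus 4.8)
The plan is to deduce the reverse inclusion $\psi({}_v\!\Path(G_d))\subseteq\phi({}_u\!\Path(F_d))$, which together with the hypothesis gives the claimed equality. I would run an induction on the length of paths out of $v$, but only after extracting from the hypothesis a rigid one-step matching between the neighbours of $u$ in $F_d$ and those of $v$ in $G_d$. As a preliminary observation, the trivial path at $u$ lies in ${}_u\!\Path(F_d)$ and $\phi$ sends it to the constant path at $\phi^0(u)$; since $\psi$ preserves path length, the only path in ${}_v\!\Path(G_d)$ with constant image is the trivial path at $v$, so $\phi^0(u)=\psi^0(v)=:x$. This also settles the base case of the induction below.

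The heart of the argument is the local matching step. Restricting the hypothesis to length-one paths: an edge out of $u$ in $F_d$ is either some $f$ with $s_F(f)=u$, mapping to a tagged edge $\phi^1(f)=e_i$, or some $f^*$ with $r_F(f)=u$, mapping to $e_i^*$; as $\psi$ preserves both length and the forward/backward type of an edge, each such image is $\psi$ of a single edge out of $v$ of the same type. This yields an inclusion of the forward one-step projection sets and of the backward ones. I would then upgrade these inclusions to equalities using Definition~\ref{defwp}: condition~(1) says the edges out of any vertex realise each tag $i$ with $1\le i\le w(x)$ exactly once, so matching by tag turns the forward inclusion into a bijection $f\leftrightarrow g$ with $\phi^1(f)=\psi^1(g)$; condition~(2) says the edges into any vertex realise each incoming structure edge $e\in r^{-1}(x)$ exactly once, so matching by $\mot$ does the same for the backward edges. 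I expect this upgrade to be the main obstacle: it is precisely where the defining conditions of a representation graph (finer than the mere star/co-star injectivity of Lemma~\ref{lemwell}) are indispensable, since one must convert a one-sided set inclusion into a vertex-by-vertex identification of emitted tags and received structure edges.

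With the matching in hand I would prove a propagation step: if the inclusion holds at $(u,v)$ and $f\leftrightarrow g$ is a matched pair with targets $u',v'$ in $F_d,G_d$, then the inclusion holds at $(u',v')$. Given $p'\in{}_{u'}\!\Path(F_d)$, prepend the matched edge to form $fp'$ (respectively $f^*p'$) out of $u$, using the convention $s(fp')=s(f)=u$; its image lies in $\psi({}_v\!\Path(G_d))$ by hypothesis, and since the first letter of that image ($e_i$ or $e_i^*$) determines the first edge of the realising path uniquely — again by conditions~(1) and~(2) applied at $v$ — the realising path must begin with $g$, leaving a tail in ${}_{v'}\!\Path(G_d)$ whose image is $\phi(p')$. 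Finally I would induct on the length of $q\in{}_v\!\Path(G_d)$: writing $q=g_1q'$, use the local matching to find the partner $f_1$ of $g_1$ at $u$, apply propagation to pass the inclusion to the matched targets, invoke the induction hypothesis on $q'$ over the partner vertex to obtain $p'$ with $\phi(p')=\psi(q')$, and set $p=f_1p'$, so that $\phi(p)=\psi(q)$. The only remaining care is routine bookkeeping with the double-graph edges $f$ and $f^*$ and with the concatenation convention, none of which should present real difficulty once the local matching of the second paragraph is secured.
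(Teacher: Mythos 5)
Your proposal is correct and follows essentially the same route as the paper: both arguments establish the reverse inclusion by induction on the length of $q\in{}_v\!\Path(G_d)$, using conditions (1) and (2) of Definition~\ref{defwp} to show that each letter of $q$ is matched by a unique edge of $F_d$ with the same image under $\phi$. The only difference is organizational --- the paper extends a matched prefix letter by letter, while you peel off the first letter and propagate the inclusion hypothesis to the matched endpoints before recursing on the tail --- which is a repackaging rather than a new idea.
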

\begin{proof}
Suppose that $\phi({}_u\!\Path(F_d))\subseteq \psi({}_v\!\Path(G_d))$. It follows that $\phi^0(u)=\psi^0(v)$ since $u\in {}_u\!\Path(F_d)$. We have to show that $\psi({}_v\!\Path(G_d))\subseteq \phi({}_u\!\Path(F_d))$. Let $p\in{}_v\!\Path(G_d)$. If $p=v$, then $\psi(p)=\psi^0(v)=\phi^0(u)\in \phi({}_u\!\Path(F_d))$. Suppose now that $p$ is nontrivial. Then $p=y_1\dots y_n$ for some $y_1,\dots, y_n\in G_d^1$ where $n\geq 1$. We proceed by induction on $n$.

Case $n=1$: Suppose that $p=g$ for some $g\in G^1$. Then $\psi^1(g)=e_i$ for some $e\in s^{-1}(\psi^0(v))$ and $1\leq i\leq w(e)$. Since $(F,\phi)$ satisfies Condition (1) in Definition \ref{defwp}, there is precisely one $f\in s_F^{-1}(u)$ such that $\tg(\phi^1(f))=i$. Hence $\phi^1(f)=e'_i$ for some $e'\in s^{-1}(\phi^0(u))$. Assume that $e\neq e'$. Since $\phi({}_u\!\Path(F_d))\subseteq \psi({}_v\!\Path(G_d))$, there is a $g'\in s_G^{-1}(v)$ such that $\psi^1(g')=\phi^1(f)=e'_i$. But this is absurd since $(G,\psi)$ satisfies Condition (1) in Definition \ref{defwp}. Thus $\psi(g)=e_i=\phi(f)\in \phi({}_u\!\Path(F_d))$.

Suppose now that $p=g^*$ for some $g\in G^1$. Then $\psi^1(g)=e_i$ for some $e\in r^{-1}(\psi^0(v))$ and $1\leq i\leq w(e)$. Since $(F,\phi)$ satisfies Condition (2) in Definition \ref{defwp}, there is precisely one $f\in r_F^{-1}(u)$ such that $\mot(\phi^1(f))=e$. Hence $\phi^1(f)=e_j$ for some $1\leq j\leq w(e)$. Assume that $i\neq j$. Since $\phi({}_u\!\Path(F_d))\subseteq \psi({}_v\!\Path(G_d))$, there is a $g'\in r_G^{-1}(v)$ such that $\psi^1(g')=\phi^1(f)=e_j$. But this is absurd since $(G,\psi)$ satisfies Condition (2) in Definition \ref{defwp}. Thus $\psi(g^*)=e^*_i=\phi(f^*)\in \phi({}_u\!\Path(F_d))$.

Case $n\rightarrow n+1$:  Suppose $p=y_1\dots y_ny_{n+1}$. By the induction assumption we know that $\psi(y_1\dots y_n)\in \phi({}_u\!\Path(F_d))$. Hence $\psi(y_1\dots y_n)=\phi(x_1\dots x_n)$ for some path $x_1\dots x_n\in {}_u\!\Path(F_d)$. Set $u':=r_{F_d}(x_n)$ and $v':=r_{G_d}(y_n)$. Clearly $\phi^0(u')=\psi^0(v')$ since $\phi$ and $\psi$ are graph homomorphisms.\\
Suppose that $y_{n+1}=g$ for some $g\in G^1$. Then $\psi^1(g)=e_i$ for some $e\in s^{-1}(\psi^0(v'))$ and $1\leq i\leq w(e)$. Since $(F,\phi)$ satisfies Condition (1) in Definition \ref{defwp}, there is precisely one $f\in s_F^{-1}(u')$ such that $\tg(\phi^1(f))=i$. Hence $\phi^1(f)=e'_i$ for some $e'\in s^{-1}(\phi^0(u'))$. Assume that $e\neq e'$. Since $\phi({}_u\!\Path(F_d))\subseteq \psi({}_v\!\Path(G_d))$, there is a $g'\in s_G^{-1}(v')$ such that $\phi(x_1\dots x_nf)=\psi(y_1\dots y_ng')$, which implies $\psi^1(g')=\phi^1(f)=e'_i$. But this is absurd since $(G,\psi)$ satisfies Condition (1) in Definition \ref{defwp}. Thus $\psi(g)=e_i=\phi(f)$ and hence $\psi(y_1\dots y_ng)=\phi(x_1\dots x_nf)\in \phi({}_u\!\Path(F_d))$.

Suppose now that $y_{n+1}=g^*$ for some $g\in G^1$. Then $\psi^1(g)=e_i$ for some $e\in r^{-1}(\psi^0(v'))$ and $1\leq i\leq w(e)$. Since $(F,\phi)$ satisfies Condition (1) in Definition \ref{defwp}, there is precisely one $f\in r_F^{-1}(u')$ such that $\mot(\phi^1(f))=e$. Hence $\phi^1(f)=e_j$ for some $1\leq j\leq w(e)$. Assume that $i\neq j$. Since $\phi({}_u\!\Path(F_d))\subseteq \psi({}_v\!\Path(G_d))$, there is a $g'\in r_G^{-1}(v')$ such that $\phi(x_1\dots x_nf^*)=\psi(y_1\dots y_n(g')^*)$, which implies $\psi^1(g')=\phi^1(f)=e_j$. But this is absurd since $(G,\psi)$ satisfies Condition (2) in Definition \ref{defwp}. Thus $\psi(g^*)=e^*_i=\phi(f^*)$ and hence $\psi(y_1\dots y_ng^*)=\phi(x_1\dots x_nf^*)\in \phi({}_u\!\Path(F_d))$.
\end{proof}

\begin{proposition}\label{propcruc}
Let $\alpha:(F,\phi)\to (G,\psi)$ be a morphism in $\RG(E)$. If $u\in F^0$, then $\phi({}_u\!\Path(F_d))=\psi({}_{\alpha^0(u)}\!\Path(G_d))$.
\end{proposition}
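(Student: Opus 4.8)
The plan is to reduce the statement to Lemma~\ref{lempath}, which already converts an inclusion of the relevant path-image sets into an equality. Concretely, writing $v:=\alpha^0(u)$, I would first establish the single inclusion $\phi({}_u\!\Path(F_d))\subseteq \psi({}_v\!\Path(G_d))$ directly from the definition of a morphism in $\RG(E)$, and then invoke Lemma~\ref{lempath} to obtain the reverse inclusion for free.

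For the inclusion, the first step is to note that a morphism $\alpha:(F,\phi)\to(G,\psi)$, being a homomorphism of directed graphs $\alpha:F\to G$, extends canonically to a homomorphism of the double graphs $F_d\to G_d$ by setting $\alpha^1(e^*):=(\alpha^1(e))^*$ for $e\in F^1$; this in turn induces a map on paths $\alpha:\Path(F_d)\to\Path(G_d)$, which I again denote by $\alpha$. The defining relation $\psi\alpha=\phi$ then persists at the level of paths, so that $\phi(p)=\psi(\alpha(p))$ for every $p\in\Path(F_d)$.

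Next I would observe that $\alpha$ respects basepoints: if $p\in{}_u\!\Path(F_d)$, then $\alpha(p)$ is a path in $G_d$ with $s(\alpha(p))=\alpha^0(s(p))=\alpha^0(u)=v$, so $\alpha(p)\in{}_v\!\Path(G_d)$. Combining this with the previous identity gives $\phi(p)=\psi(\alpha(p))\in\psi({}_v\!\Path(G_d))$, and since $p$ was arbitrary, $\phi({}_u\!\Path(F_d))\subseteq\psi({}_v\!\Path(G_d))$.

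Finally, applying Lemma~\ref{lempath} with this $u$ and $v=\alpha^0(u)$ upgrades the inclusion to the equality $\phi({}_u\!\Path(F_d))=\psi({}_{\alpha^0(u)}\!\Path(G_d))$, as required. There is no genuinely hard step here: the substantive work is carried by Lemma~\ref{lempath}, and the only point demanding care is the routine verification that $\alpha$ extends compatibly to the double graphs and that the compatibility $\psi\alpha=\phi$ survives at the path level.
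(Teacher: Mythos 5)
Your proposal is correct and follows exactly the paper's own argument: establish the inclusion $\phi({}_u\!\Path(F_d))\subseteq\psi({}_{\alpha^0(u)}\!\Path(G_d))$ by pushing paths forward along $\alpha$ and using $\psi\alpha=\phi$, then invoke Lemma~\ref{lempath} to upgrade it to an equality. The only difference is that you spell out the routine extension of $\alpha$ to double graphs and paths, which the paper leaves implicit.
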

\begin{proof}
Let $u\in F^0$ and $p\in \phi({}_u\!\Path(F_d))$. Then there is a $q\in {}_u\!\Path(F_d)$ such that $\phi(q)=p$. Clearly $\alpha(q)\in {}_{\alpha^0(u)}\!\Path(G_d)$. Hence $p=\phi(q)=\psi(\alpha(q))\in \psi({}_{\alpha^0(u)}\!\Path(G_d))$. We have shown that $\phi({}_u\!\Path(F_d))\subseteq\psi({}_{\alpha^0(u)}\!\Path(G_d))$. It follows from Lemma \ref{lempath} that $\phi({}_u\!\Path(F_d))=\psi({}_{\alpha^0(u)}\!\Path(G_d))$.
\end{proof}

\begin{proposition}\label{propsurj}
Let $\alpha:(F,\phi)\to (G,\psi)$ be a morphism in $\RG(E)$. Then $\alpha$ is a covering map. 
\end{proposition}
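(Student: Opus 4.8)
The plan is to verify directly the two clauses of Definition~\ref{defcovering}, read for the plain directed graphs $F$ and $G$ (all of whose weights equal $1$), so that being a \emph{covering} means that $\alpha$ is surjective and that for every $v\in F^0$ the restrictions $\alpha:s_F^{-1}(v)\to s_G^{-1}(\alpha^0(v))$ and $\alpha:r_F^{-1}(v)\to r_G^{-1}(\alpha^0(v))$ are bijections. The single feature that drives everything is the identity $\psi\alpha=\phi$: since $\psi^0\alpha^0=\phi^0$ we have $\psi^0(\alpha^0(v))=\phi^0(v)$ for each $v$, and since $\psi^1\alpha^1=\phi^1$ the map $\alpha$ preserves both the tag $\tg$ and the structure edge $\mot$ of the image of an edge. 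I would establish the two local bijections first and deduce surjectivity afterwards from connectedness of $G$.

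For the out-edges, fix $v\in F^0$. By Definition~\ref{defwp}(1) the assignment $f\mapsto \tg(\phi^1(f))$ identifies $s_F^{-1}(v)$ with $\{1,\dots,w(\phi^0(v))\}$, and likewise $g\mapsto\tg(\psi^1(g))$ identifies $s_G^{-1}(\alpha^0(v))$ with $\{1,\dots,w(\psi^0(\alpha^0(v)))\}=\{1,\dots,w(\phi^0(v))\}$. Because $\alpha$ preserves tags, these two identifications are compatible with $\alpha$, so $\alpha$ is forced to realise the bijection matching tags. Concretely: injectivity follows from the uniqueness clause of (1) applied in $F$ (two out-edges of $v$ with the same image have equal tag, hence coincide), and surjectivity follows by producing, for a given $g\in s_G^{-1}(\alpha^0(v))$ of tag $i$, the unique $f\in s_F^{-1}(v)$ of tag $i$ and then invoking the uniqueness clause of (1) in $G$ to conclude $\alpha^1(f)=g$.

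The in-edge case is formally identical, with Definition~\ref{defwp}(2) replacing (1) and the structure-edge map $\mot$ replacing the tag: for $v\in F^0$, the map $f\mapsto\mot(\phi^1(f))$ identifies $r_F^{-1}(v)$ with $r^{-1}(\phi^0(v))\subseteq E^1$, and $\mot(\psi^1(\alpha^1(f)))=\mot(\phi^1(f))$ shows that $\alpha$ respects this labelling; injectivity then uses uniqueness of (2) in $F$ and surjectivity uses uniqueness of (2) in $G$. Finally, surjectivity of $\alpha$ is a propagation argument: the nonempty set $\alpha^0(F^0)$ is closed under following edges of $G$ in both directions, since if $\alpha^0(v)=s_G(g)$ (resp.\ $\alpha^0(v)=r_G(g)$) then the already-proved out-edge (resp.\ in-edge) bijection at $v$ lifts $g$ to an edge of $F$, placing the other endpoint of $g$ again in $\alpha^0(F^0)$. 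Hence $\alpha^0(F^0)$ is a union of connected components of $G$, and as $G$ is connected it must equal $G^0$; surjectivity of $\alpha^1$ is then immediate from the out-edge lifting. The point requiring care is not any single estimate but the disciplined two-sided use of the uniqueness clauses of Definition~\ref{defwp}, namely uniqueness in $F$ for injectivity and uniqueness in $G$ for surjectivity, carried out on both the tag side and the structure-edge side; the connectedness step is then routine.
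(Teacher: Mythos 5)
Your proof is correct, but it is organised differently from the paper's. The paper proves global surjectivity of $\alpha^0$ first, by lifting an entire path: given $v\in G^0$ it takes $q\in{}_{\alpha^0(u)}\!\Path_v(G_d)$, uses Proposition~\ref{propcruc} to find $p\in{}_u\!\Path(F_d)$ with $\phi(p)=\psi(q)$, and then invokes the star-injectivity Lemma~\ref{lemwell} to conclude $\alpha(p)=q$; only afterwards does it treat surjectivity of $\alpha^1$ and the local injectivity via the tag argument (handling the in-edge side with ``a similar argument''). You instead establish the two local bijections first, by observing that Definition~\ref{defwp}(1) and (2) identify $s_F^{-1}(v)$ with $\{1,\dots,w(\phi^0(v))\}$ via tags and $r_F^{-1}(v)$ with $r^{-1}(\phi^0(v))$ via structure edges, that the same holds in $G$ over the same index sets because $\psi^0\alpha^0=\phi^0$, and that $\alpha$ commutes with these labellings; you then get surjectivity of $\alpha^0$ by the standard propagation argument that $\alpha^0(F^0)$ is nonempty and closed under undirected adjacency in the connected graph $G$. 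Your route is more elementary and self-contained --- it needs neither Lemma~\ref{lemwell} nor Proposition~\ref{propcruc} --- and it makes the local covering structure the primary object; the paper's route reuses machinery it has already built and needs elsewhere in the section, at the cost of invoking those earlier results. Both arguments are complete; your careful point that uniqueness in $F$ gives injectivity while uniqueness in $G$ gives surjectivity of each local restriction is exactly the content of the commuting-triangle observation.
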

\begin{proof}
We first show that $\alpha^0$ is surjective. Choose a $u\in F^0$. If $v\in G^0$, then there is a path $q\in _{\alpha^0(u)}\!\Path_{v}(G_d)$ since $G$ is connected. By Proposition \ref{propcruc} we have $\phi({}_u\!\Path(F_d))=\psi({}_{\alpha^0(u)}\!\Path(G_d))$. Hence there is a $p\in {}_u\!\Path(F_d)$ such that $\phi(p)=\psi(q)$. Since $\phi=\psi\alpha$, we obtain $\psi(\alpha(p))=\psi(q)$. It follows from Lemma \ref{lemwell} that $\alpha(p)=q$. Thus $\alpha^0(r(p))=r(\alpha(p))=r(q)=v$.

We next show that $\alpha^1$ is surjective. Suppose $g\in G^1$ with $v:=s(g)$, $w:=s(\psi(g))$ and $i:=\tagg(\psi(g))$. Since $\alpha^0$ is surjective, there is a $u\in F^0$ such that $\alpha^0(u)=v$. Since $\phi(u)=\psi(\alpha(u))=w$,  by the definition of a representation graph, there is a unique edge $f \in F^1$ with $s(f)=u$ such that $\tagg(\phi(f))=i$. Since $s(\alpha(f))=s(g)$ and $\tagg(\psi(g))=\tagg(\phi(f))=\tagg(\psi(\alpha(f)))=i$, it follows that $\alpha(f)=g$. This shows that $\alpha^1$ is surjective. 

Next we show that that for any $u\in F^0$, the map $\alpha^1: s^{-1}(u)\rightarrow s^{-1}(\alpha^0(u))$ is injective. 
Let $f_1, f_2 \in F^1$ are distinct,  with $s(f_1)=s(f_2)=u$. By the definition of the representation, $\tagg(\phi(f_1))\not = \tagg(\phi(f_2))$ and thus  $\tagg(\psi(\alpha(f_1)))\not = \tagg(\psi(\alpha(f_1)))$. Hence $\alpha(f_1) \not = \alpha(f_2)$. 

A similar argument shows that for any $u\in F^0$, the map $\alpha^1: r^{-1}(u)\rightarrow r^{-1}(\alpha^0(u))$ is bijective. 
\end{proof}

\subsection{Quotients of representation graphs}
For any object $(F,\phi)$ in $\RG(E)$ we define an equivalence relation $\sim$ on $F^0$ by $u\sim v$ if $\phi({}_u\!\Path(F_d))=\phi({}_v\!\Path(F_d))$. Recall that if $\sim$ and $\approx$ are equivalence relations on a set $X$, then one writes $\approx~\leq~ \sim $ and calls $\approx$ {\it finer} than $\sim$ (and $\sim$ {\it coarser} than $\approx$) if $x\approx y$ implies that $x\sim y$, for any $x,y\in X$. 

\begin{definition}\label{defadm}
Let $(F,\phi) \in \RG(E)$ be a representation graph of $E$. An equivalence relation $\approx$ on $F^0$ is called {\it admissible} if the following hold:
\begin{enumerate}
\item $\approx~\leq~\sim$.
\item If $u\approx v$, $p\in{} _u\!\Path_x(F_d)$, $q\in{} _v\!\Path_y(F_d)$ and $\phi(p)=\phi(q)$, then $x\approx y$.
\end{enumerate}
\end{definition}

The admissible equivalence relations on $F^0$ (with partial order $\leq$) form a bounded lattice 
whose maximal element is $\sim$ and whose minimal element is the equality relation $=$.

Let $(F,\phi) \in \RG(E)$ be a representation graph of $E$ and $\approx$ an admissible equivalence relation on $F^0$. If $f, g\in F^1$ we write $f\approx g$ if $s(f)\approx s(g)$ and $\phi(f)=\phi(g)$. This defines an equivalence relation on $F^1$. Define a representation graph $(F_\approx,\phi_\approx)$ for $E$ by 
\begin{align*}
F_\approx^0&=F^0/\approx,\\
F_\approx^1&=F^1/\approx,\\
s([f])&=[s(f)],\\
r([f])&=[r(f)],\\
\phi_\approx^0([v])&=\phi^0(v),\\
\phi_\approx^1([f])&=\phi^1(f).
\end{align*}
We call $(F_\approx,\phi_\approx)$ a {\it quotient} of $(F,\phi)$. It is easy to check that $(F_\approx,\phi_\approx)$ is a representation graph of $E$. 

\begin{theorem}\label{thmadm}
Let $(F,\phi)$ and $(G,\psi)$ be objects in $\RG(E)$. Then there is a morphism $\alpha:(F,\phi)\to (G,\psi)$ if and only if $(G,\psi)$ is isomorphic to a quotient of $(F,\phi)$.
\end{theorem}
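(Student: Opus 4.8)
The plan is to prove both implications by exhibiting explicit maps, with the bulk of the work in the forward direction. The backward implication is immediate: if $(G,\psi)$ is isomorphic to a quotient $(F_\approx,\phi_\approx)$ via some isomorphism $\beta$, then the canonical projection $\pi\colon(F,\phi)\to(F_\approx,\phi_\approx)$ given by $\pi^0(v)=[v]$ and $\pi^1(f)=[f]$ is a graph homomorphism (since $s([f])=[s(f)]$ and $r([f])=[r(f)]$), and it satisfies $\phi_\approx\pi=\phi$ by the very definition of $\phi_\approx$. Hence $\pi$ is a morphism in $\RG(E)$, and $\beta\pi\colon(F,\phi)\to(G,\psi)$ is the required morphism.

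For the forward direction, suppose $\alpha\colon(F,\phi)\to(G,\psi)$ is a morphism, and take $\approx$ to be the kernel equivalence of $\alpha^0$, that is, $u\approx v$ iff $\alpha^0(u)=\alpha^0(v)$. First I would verify that $\approx$ is admissible in the sense of Definition~\ref{defadm}. Condition~(1), namely $\approx\,\leq\,\sim$, is a direct consequence of Proposition~\ref{propcruc}: if $\alpha^0(u)=\alpha^0(v)$ then $\phi({}_u\!\Path(F_d))=\psi({}_{\alpha^0(u)}\!\Path(G_d))=\psi({}_{\alpha^0(v)}\!\Path(G_d))=\phi({}_v\!\Path(F_d))$, so $u\sim v$. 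For Condition~(2), given $u\approx v$, $p\in{}_u\!\Path_x(F_d)$, $q\in{}_v\!\Path_y(F_d)$ with $\phi(p)=\phi(q)$, I would transport the data into $G_d$: the paths $\alpha(p)$ and $\alpha(q)$ have the common source $\alpha^0(u)=\alpha^0(v)$, and $\psi(\alpha(p))=\phi(p)=\phi(q)=\psi(\alpha(q))$ because $\psi\alpha=\phi$. Lemma~\ref{lemwell} applied to $(G,\psi)$ then forces $\alpha(p)=\alpha(q)$, and comparing ranges gives $\alpha^0(x)=r(\alpha(p))=r(\alpha(q))=\alpha^0(y)$, i.e. $x\approx y$.

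Next I would show that $\alpha$ factors through the quotient, defining $\bar\alpha\colon(F_\approx,\phi_\approx)\to(G,\psi)$ by $\bar\alpha^0([v])=\alpha^0(v)$ and $\bar\alpha^1([f])=\alpha^1(f)$. Well-definedness on vertices is the definition of $\approx$; on edges, if $[f]=[g]$ then $s(f)\approx s(g)$ and $\phi(f)=\phi(g)$, so $\alpha(f)$ and $\alpha(g)$ share a source in $G$ and satisfy $\psi(\alpha(f))=\phi(f)=\phi(g)=\psi(\alpha(g))$, and Lemma~\ref{lemwell} yields $\alpha(f)=\alpha(g)$. A routine check shows $\bar\alpha$ is a graph homomorphism with $\psi\bar\alpha=\phi_\approx$, so it is a morphism in $\RG(E)$. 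It remains to see that $\bar\alpha^0$ and $\bar\alpha^1$ are bijective, which by the characterisation of isomorphisms in $\RG(E)$ makes $\bar\alpha$ an isomorphism. Surjectivity comes for free from Proposition~\ref{propsurj}, which guarantees that $\alpha$ is a covering map, hence $\alpha^0$ and $\alpha^1$ are onto and these maps factor through $\bar\alpha$. Injectivity of $\bar\alpha^0$ is precisely the definition of $\approx$, while injectivity of $\bar\alpha^1$ follows since $\alpha(f)=\alpha(g)$ implies $s(f)\approx s(g)$ and $\phi(f)=\psi\alpha(f)=\psi\alpha(g)=\phi(g)$, whence $[f]=[g]$.

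The main obstacle I anticipate is Condition~(2) of admissibility together with the well-definedness and injectivity of $\bar\alpha$ on edges: all three rest on the star and co-star injectivity of the representation map encoded in Lemma~\ref{lemwell}, and one must be careful to record the correct common endpoint (source or range) of the two paths in $G_d$ before invoking it. The other genuinely non-formal input is the surjectivity of $\bar\alpha$, which is not visible from the construction and is supplied entirely by Proposition~\ref{propsurj}; without the fact that every morphism in $\RG(E)$ is automatically a covering, there would be no reason for the quotient to recover all of $(G,\psi)$.
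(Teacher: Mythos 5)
Your proposal is correct and follows essentially the same route as the paper: the kernel equivalence of $\alpha^0$, admissibility checked via Proposition~\ref{propcruc} and Lemma~\ref{lemwell}, the induced map on the quotient shown bijective using Proposition~\ref{propsurj}, and the canonical projection for the converse. No gaps.
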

\begin{proof}
$(\Rightarrow)$ Suppose there is a morphism $\alpha:(F,\phi)\to (G,\psi)$. If $u,v\in F^0$, we write $u\approx v$ if $\alpha^0(u)=\alpha^0(v)$. Clearly $\approx$ defines an equivalence relation on $F^0$. Below we check that $\approx$ is admissible. 
\begin{enumerate}[(i)]
\item Suppose $u\approx v$. Then $\phi({}_u\!\Path(F_d))=\psi({}_{\alpha^0(u)}\!\Path(G_d))=\psi({}_{\alpha^0(v)}\!\Path(G_d))=\phi({}_v\!\Path(F_d))$ by Proposition \ref{propcruc}. Hence $u\sim v$.
\item Suppose $u\approx v$, $p\in{} _u\!\Path_x(F_d)$, $q\in{} _v\!\Path_y(F_d)$ and $\phi(p)=\phi(q)$. Clearly $\alpha(p)\in{} _{\alpha^0(u)}\!\Path_{\alpha^0(x)}$ $(G_d)$ and $\alpha(q)\in{} _{\alpha^0(v)}\!\Path_{\alpha^0(y)}(G_d)$. Moreover, $\psi(\alpha(p))=\phi(p)=\phi(q)=\psi(\alpha(q))$. Since $\alpha^0(u)=\alpha^0(v)$, it follows from Lemma \ref{lemwell} that $\alpha(p)=\alpha(q)$. Hence $\alpha^0(x)=r(\alpha(p))=r(\alpha(q))=\alpha^0(y)$ and therefore $x\approx y$.
\end{enumerate}

Note that by Lemma \ref{lemwell} we have $f\approx g$ if and only if $\alpha^1(f)=\alpha^1(g)$, for any $f,g\in F^1$. Define a graph homomorphism $\beta:F_\approx\to G$ by $\beta^0([v])=\alpha^0(v)$ and $\beta^1([f])=\alpha^1(f)$. Clearly $\psi\beta=\phi_{\approx}$ and therefore $\beta:(F_\approx,\phi_\approx)\to (G,\psi)$ is a morphism. In view of Proposition \ref{propsurj}, $\beta^0$ and $\beta^1$ are bijective and hence $\beta$ is an isomorphism.

$(\Leftarrow)$ Suppose now that $(G,\psi)\cong (F_\approx,\phi_\approx)$ for some admissible equivalence relation $\approx$ on $F^0$. In order to show that there is a morphism $\alpha:(F,\phi)\to (G,\psi)$ it suffices to show that there is a morphism $\beta:(F,\phi)\to (F_\approx,\phi_\approx)$. But this is obvious (define $\beta^0(v)=[v]$ and $\beta^1(f)=[f]$).
\end{proof}

\subsection{The subcategories $\RG(E)_C$} \label{subsecnn}

Let $(F,\phi)$ and $(G,\psi)$ be objects in $\RG(E)$. We write $(F,\phi)\leftrightharpoons(G,\psi)$ if there is a $u\in F^0$ and a $v\in G^0$ such that $\phi({}_u\!\Path(F_d))=\psi({}_v\!\Path(G_d))$. One checks easily that $\leftrightharpoons$ defines an equivalence relation on $\Ob(\RG(E))$. If $C$ is an $\leftrightharpoons$-equivalence class, then we denote by $\RG(E)_C$ the full subcategory of $\RG(E)$ such that $\Ob(\RG(E)_C)=C$. If $\alpha:(F,\phi)\to (G,\psi)$ is a morphism in $\RG(E)$, then $(F,\phi)\leftrightharpoons(G,\psi)$ by Proposition \ref{propcruc}. Thus $\RG(E)$ is the disjoint union of the subcategories $\RG(E)_C$, where $C$ ranges over all $\leftrightharpoons$-equivalence classes.

Fix an $\leftrightharpoons$-equivalence class $C$, a representation graph $(F,\phi)\in C$ and a vertex $u\in F^0$. 
We denote by $\phi({}_u\!\Path(F_d))_{\nb}$ the set of all paths in $\phi({}_u\!\Path(F_d))$ that are reduced (see~\S\ref{hnhfgftgrgr}). Define a representation graph $(T,\xi)=(T_C,\xi_C)$ for $E$ by
\begin{align*}
T^0&=\{v_p\mid p\in \phi({}_u\!\Path(F_d))_{\nb}\},\\
T^1&=\{e_p\mid p\in \phi({}_u\!\Path(F_d))_{\nb},p\neq \phi(u)\},\\
s(e_{x_1\dots x_n})&=\begin{cases}v_{x_1\dots x_{n-1}},&\text{ if }x_n\in \hat E^1,\\v_{x_1\dots x_n},&\text{ if }x_n\in (\hat E^1)^*,\end{cases}\\
r(e_{x_1\dots x_n})&=\begin{cases}v_{x_1\dots x_n},&\text{ if }x_n\in \hat E^1,\\v_{x_1\dots x_{n-1}},&\text{ if }x_n\in (\hat E^1)^*,\end{cases}\\
\xi^0(v_{x_1\dots x_n})&=\begin{cases}\phi(u),&\text{ if }n=1 \text{ and }x_1=\phi(u),\\r_{\hat E}(x_n),&\text{ if }x_n\in \hat E^1,\\s_{\hat E}(x_n^*),&\text{ if }x_n\in (\hat E^1)^*,\end{cases}\\
\xi^1(e_{x_1\dots x_n})&=\begin{cases}x_n,&\text{ if }x_n\in \hat E^1,\\x_n^*,&\text{ if }x_n\in (\hat E^1)^*.\end{cases}
\end{align*}
Here we use the convention that if $x_1\dots x_n \in \phi({}_u\!\Path(F_d))_{\nb}$, where $n=1$, then $x_1\dots x_{n-1}=\phi(u)$. Clearly $T$ is nonempty and connected. 

\begin{example}\label{exuniv}
Suppose $E$ is a weighted graph with one vertex and two loops of weight $2$: 
\[E:\xymatrix{
 v \ar@(dl,ul)^{\mathtt{e_1,e_2}} \ar@(dr,ur)_{\mathtt{f_1,f_2}} &
}\]
and $(F,\phi)$ is a representation graph for $E$, where $F$ is given by
\[F:\xymatrix{
 u \ar@(dl,ul)^{g} \ar@(dr,ur)_{h} &
}\]
and $\phi:F\to \hat E$ is defined by $\phi^0(u)=v$, $\phi^1(g)=e_1$, $\phi^1(h)=f_2$. Then ${}_u\!\Path(F_d)=\Path(F_d)$ and $\phi({}_u\!\Path(F_d))_{\nb}$ consists of all nonbacktracking paths in $\Path(\hat E_d)$ whose letters come from the alphabet $\{v,e_1,e_1^*,f_2,f_2^*\}$. Let $C$ be the $\leftrightharpoons$-equivalence class of $(F,\phi)$. Then $T_C$ is the graph
\[\xymatrix@C=7pt@R=20pt{
&&&&&&&v_v\ar[ddrr]^(.6){e_{e_1}}\ar[ddrrrrrr]^{e_{f_2}}&&&&&&&\\
&&&&&&&&&&&&&&\\
&v_{e_1^*}\ar[uurrrrrr]^{e_{e_1^*}}\ar[ddr]^(.5){e_{e_1^*f_2}}&&&&v_{f_2^*}\ar[uurr]^(.4){e_{f_2^*}}\ar[ddr]^(.5){e_{f_2^*e_1}}&&&&v_{e_1}\ar[dd]^(.7){e_{e_1e_1}}\ar[ddr]^(.5){e_{e_1f_2}}&&&&v_{f_2}\ar[dd]^(.7){e_{f_2e_1}}\ar[ddr]^(.5){e_{f_2f_2}}&\\
&&&&&&&&&&&&&&\\
v_{e_1^*e_1^*}\ar[uur]^(.4){e_{e_1^*e_1^*}}&v_{e_1^*f_2^*}\ar[uu]^(.3){e_{e_1^*f_2^*}}&v_{e_1^*f_2}&&v_{f_2^*e_1^*}\ar[uur]^(.4){e_{f_2^*e_1^*}}&v_{f_2^*f_2^*}\ar[uu]^(.3){e_{f_2^*f_2^*}}&v_{f_2^*e_1}&&v_{e_1f_2^*}\ar[uur]^(.4){e_{e_1f_2^*}}&v_{e_1e_1}&v_{e_1f_2}&&v_{f_2e_1^*}\ar[uur]^(.4){e_{f_2e_1^*}}&v_{f_2e_1}&v_{f_2f_2}.\\
&\vdots&&&&\vdots&&&&\vdots&&&&\vdots&
}\]
\end{example}

\begin{proposition}\label{propuniv}
If $(G,\psi)\in C$, then there is a morphism $\alpha:(T_C,\xi_C)\to (G,\psi)$. 
\end{proposition}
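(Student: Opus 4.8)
The plan is to realise $\alpha$ as the map sending each reduced path $p$ indexing a vertex $v_p$ of $T_C$ to the endpoint of its unique lift in $G$; the engine throughout is the unique-lifting property of Lemma~\ref{lemwell}.

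First I would transport the base point into $G$. Since $(F,\phi)\in C$ and $(G,\psi)\in C$, by definition of $\leftrightharpoons$ there are $u_0\in F^0$ and $v_0\in G^0$ with $\phi({}_{u_0}\!\Path(F_d))=\psi({}_{v_0}\!\Path(G_d))$. This match sits at $u_0$, not necessarily at our fixed $u$, so the first task is to move it. As $F$ is connected, choose $t\in {}_{u_0}\!\Path_u(F_d)$; then $\phi(t)\in\psi({}_{v_0}\!\Path(G_d))$, so there is $q\in {}_{v_0}\!\Path(G_d)$ with $\psi(q)=\phi(t)$, and I set $v:=r(q)$. For any $a\in {}_u\!\Path(F_d)$, a lift of $\phi(t)\phi(a)$ from $v_0$ has, by star-injectivity (Lemma~\ref{lemwell}), its length-$|t|$ prefix equal to $q$, hence factors as $q$ followed by a lift of $\phi(a)$ from $v$; this gives $\phi({}_u\!\Path(F_d))\subseteq \psi({}_v\!\Path(G_d))$, which Lemma~\ref{lempath} upgrades to equality. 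In particular every reduced $p$ indexing a vertex of $T_C$ lies in $\psi({}_v\!\Path(G_d))$.

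Next I would build $\alpha$. For each $p\in\phi({}_u\!\Path(F_d))_{\nb}=\psi({}_v\!\Path(G_d))_{\nb}$ there is a path $q_p\in {}_v\!\Path(G_d)$ with $\psi(q_p)=p$; by Lemma~\ref{lemwell} it is unique, and since $\psi$ carries a backtracking pair to a backtracking pair, $p$ reduced forces $q_p$ reduced. I set $\alpha^0(v_p):=r(q_p)$. The key coherence fact is that lifting respects prefixes: if $p=p'x_n$, the length-$(n{-}1)$ prefix of $q_p$ has $\psi$-image $p'$ and starts at $v$, so by uniqueness it equals $q_{p'}$; thus $q_p=q_{p'}k$ for a single edge $k\in G_d^1$ with $\psi(k)=x_n$. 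I then set $\alpha^1(e_p):=k$ when $x_n\in\hat E^1$ (so $k\in G^1$), and $\alpha^1(e_p):=k^*$ when $x_n\in(\hat E^1)^*$ (so $\alpha^1(e_p)\in G^1$ and $k=(\alpha^1(e_p))^*$).

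Finally I would verify that $\alpha=(\alpha^0,\alpha^1)$ is a graph homomorphism with $\psi\alpha=\xi$, hence a morphism in $\RG(E)$. Checking $s_G(\alpha^1(e_p))=\alpha^0(s(e_p))$, $r_G(\alpha^1(e_p))=\alpha^0(r(e_p))$ and $\psi(\alpha^1(e_p))=\xi^1(e_p)$ reduces, in each of the two cases for $x_n$, to reading off the source and range of the last edge $k$ of $q_p=q_{p'}k$ and comparing with the defining formulas for $s,r,\xi^1$ on $T_C$; the vertex identity $\psi^0(\alpha^0(v_p))=\xi^0(v_p)$ follows from $\psi^0(r(q_p))=r_{\hat E}(\psi(q_p))=r_{\hat E}(p)$. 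I expect the genuinely non-formal step to be the first paragraph --- transporting the base point and proving the two path-image sets coincide for our \emph{fixed} $u$ --- together with the observation that uniqueness of lifts forces prefix-coherence; once these are in place, the remaining verifications are a direct case check driven by Lemma~\ref{lemwell}.
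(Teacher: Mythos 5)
Your proof is correct and follows essentially the same route as the paper's: lift each reduced path $p\in\phi({}_u\!\Path(F_d))_{\nb}$ uniquely (via Lemma~\ref{lemwell}) to a path in $G_d$ starting at a suitable $v\in G^0$, and read off $\alpha^0$ and $\alpha^1$ from the endpoint and last edge of that lift. Your first paragraph --- transporting the $\leftrightharpoons$-witness from an arbitrary vertex $u_0\in F^0$ to the fixed base vertex $u$ using connectivity, Lemma~\ref{lemwell} and Lemma~\ref{lempath} --- supplies a detail the paper's proof silently assumes (the definition of $\leftrightharpoons$ only gives a matching pair at \emph{some} vertex of $F$, not necessarily at $u$), and the prefix-coherence and homomorphism verifications you spell out are exactly the ones the paper leaves to the reader.
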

\begin{proof}
Since $(G,\psi)\leftrightharpoons(F,\phi)$, there is a $v\in G^0$ such that $\phi({}_u\!\Path(F_d))=\psi({}_v\!\Path(G_d))$.
Define a homomorphism $\alpha:T_C\to G$ as follows. Let $x_1\dots x_n\in \phi({}_u\!\Path(F_d))_{\nb}$. Since $\phi({}_u\!\Path(F_d))=\psi({}_v\!\Path(G_d))$, there is a (uniquely determined) path $y_1\dots y_n\in {}_v\!\Path(G_d)$ such that $\psi(y_1\dots y_n)=x_1\dots x_n$. Define $\alpha^0(v_{x_1\dots x_n})=r(y_n)$, $\alpha^1(e_{x_1\dots x_n})=y_n$ if $y_n\in G^1$ and respectively $\alpha^1(e_{x_1\dots x_n})=y_n^*$ if $y_n\in (G^1)^*$. We leave it to the reader to check that $\alpha$ is a graph homomorphism and that $\psi\alpha=\xi_C$.
\end{proof}

We are in a position to show that in each equivalence class $C$ of the representation graphs of the weighted graph $E$, there is only one irreducible representation graph up to isomorphism. 

\begin{corollary}\label{corcat1}
Up to isomorphism the representation graphs in $C$ are precisely the quotients of $(T_C,\xi_C)$, and consequently  \[(S_C,\zeta_C):=((T_C)_\sim,(\xi_C)_\sim)\]  is the unique irreducible representation graph in $C$.
\end{corollary}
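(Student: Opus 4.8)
The plan is to derive everything from Proposition~\ref{propuniv} and Theorem~\ref{thmadm}, reducing the whole statement to a single clean observation: a quotient $(F_\approx,\phi_\approx)$ is irreducible precisely when $\approx$ is the maximal admissible relation $\sim$.

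First I would settle the assertion that the representation graphs in $C$ are, up to isomorphism, exactly the quotients of $(T_C,\xi_C)$. For one direction, if $(G,\psi)\in C$, then Proposition~\ref{propuniv} supplies a morphism $\alpha:(T_C,\xi_C)\to(G,\psi)$, and Theorem~\ref{thmadm} then exhibits $(G,\psi)$ as (isomorphic to) a quotient of $(T_C,\xi_C)$. Conversely, if $(G,\psi)\cong((T_C)_\approx,(\xi_C)_\approx)$ is a quotient, then Theorem~\ref{thmadm} gives a morphism $(T_C,\xi_C)\to(G,\psi)$, whence $(T_C,\xi_C)\leftrightharpoons(G,\psi)$ by Proposition~\ref{propcruc} (as noted in \S\ref{subsecnn}), so that $(G,\psi)$ lies in $C$.

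The key reduction is the following. Let $(F,\phi)$ be any representation graph and $\approx$ an admissible relation on $F^0$. The quotient map $\beta:(F,\phi)\to(F_\approx,\phi_\approx)$ is a morphism in $\RG(E)$, so Proposition~\ref{propcruc} yields $\phi_\approx({}_{[v]}\!\Path((F_\approx)_d))=\phi({}_v\!\Path(F_d))$ for every $v\in F^0$. Since $F_\approx$ is connected whenever $F$ is, the irreducibility condition of Definition~\ref{defrepirres}(2) for $F_\approx$ reads: for all distinct classes $[u]\neq[v]$ one has $\phi({}_u\!\Path(F_d))\neq\phi({}_v\!\Path(F_d))$, i.e. $u\not\sim v$. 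Taking contrapositives, this says exactly $u\sim v\Rightarrow u\approx v$, that is $\sim\,\leq\,\approx$; combined with admissibility $\approx\,\leq\,\sim$, I conclude that $F_\approx$ is irreducible if and only if $\approx\,=\,\sim$.

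Applying this with $F=T_C$ finishes the proof. Taking $\approx\,=\,\sim$ shows that $(S_C,\zeta_C)=((T_C)_\sim,(\xi_C)_\sim)$ is irreducible, and it belongs to $C$ by the first paragraph. For uniqueness, any irreducible $(G,\psi)\in C$ is isomorphic to a quotient $((T_C)_\approx,(\xi_C)_\approx)$ for some admissible $\approx$ by the first paragraph, and its irreducibility forces $\approx\,=\,\sim$ by the key reduction, so $(G,\psi)\cong(S_C,\zeta_C)$. The main obstacle I expect is the bookkeeping in the second paragraph, namely verifying the transported-path identity $\phi_\approx({}_{[v]}\!\Path((F_\approx)_d))=\phi({}_v\!\Path(F_d))$ and correctly matching the $\sim$-relation on the quotient with the lattice inequality $\sim\,\leq\,\approx$; once that correspondence is pinned down, irreducibility of a quotient becomes literally the statement that $\approx$ is maximal admissible.
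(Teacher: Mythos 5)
Your proposal is correct and follows essentially the same route as the paper: the first assertion from Proposition~\ref{propuniv} together with Theorem~\ref{thmadm}, and the second from the observation that a quotient $((T_C)_\approx,(\xi_C)_\approx)$ is irreducible exactly when $\approx\,=\,\sim$. The paper states that last equivalence without proof, and your "key reduction" via Proposition~\ref{propcruc} is a correct way to fill in that detail.
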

\begin{proof}
The first statement follows from Theorem \ref{thmadm} and Proposition \ref{propuniv}. The second statement now follows since a quotient $((T_C)_\approx,(\xi_C)_\approx)$ satisfies the equivalent conditions in Definition \ref{defrepirres} if and only if $\approx$ equals $\sim$. 
\end{proof}

Recall that an object $X$ in a category {\bf $C$} is called {\it repelling} (resp. {\it attracting}) if for any object $Y$ in {\bf $C$} there is a morphism $X\rightarrow Y$ (resp. $Y\rightarrow X$). By Proposition \ref{propuniv} $(T_C,\xi_C)$ is a repelling object in $C$. On the other hand, if $(G,\psi)$ is an object in $C$, then clearly $(S_C,\zeta_C)$ is isomorphic to a quotient of $(G,\psi)$. It follows from Theorem \ref{thmadm} that $(S_C,\zeta_C)$ is an attracting object in $C$.

\begin{example}\label{excatone}
Suppose $E$ is a weighted graph with one vertex and two loops of weight $2$: 
\[E:\xymatrix{
 v \ar@(dl,ul
 )^{\mathtt{e_1,e_2}} \ar@(dr,ur)_{\mathtt{f_1,f_2}} &
}.\]

Consider the representation graphs $(F_1,\phi_1),\dots, (F_7,\phi_7)$ for $E$ given below.

\begin{tabular}{l}
$\xymatrix@C=10pt@R=10pt{
&&&&&&&&&&&&&&&\\
&&&&&&&\ar@{..>}[r]&v\ar@{..>}[u]\ar@{..>}[r]&&&&&&&\\
&&&&&&&&&&&&&&&\\
&&&&&\ar@{..>}[r]&v\ar@{..>}[u]\ar^{e_1}[rr]&&v\ar[uu]^{f_2}\ar^{e_1}[rr]&&v\ar@{..>}[u]\ar@{..>}[r]&&&&&\\
&&&&&&\ar@{..>}[u]&&&&\ar@{..>}[u]&&&&&\\
&&&\ar@{..>}[r]&v\ar@{..>}[r]\ar@{..>}[u]&&&&&&&\ar@{..>}[r]&v\ar@{..>}[u]\ar@{..>}[r]&&&\\
&&&&&&&&&&&&&&&\\
(F_1,\phi_1):&\ar@{..>}[r]&v\ar@{..>}[u]\ar^{e_1}[rr]&&v\ar^{f_2}[uu]\ar^{e_1}[rrrr]&&&&v\ar^{f_2}[uuuu]\ar^{e_1}[rrrr]&&&&v\ar^{f_2}[uu]\ar^{e_1}[rr]&&v\ar@{..>}[u]\ar@{..>}[r]&.\\
&&\ar@{..>}[u]&&&&&&&&&&&&\ar@{..>}[u]&\\
&&&\ar@{..>}[r]&v\ar^{f_2}[uu]\ar@{..>}[r]&&&&&&&\ar@{..>}[r]&v\ar^{f_2}[uu]\ar@{..>}[r]&&&\\
&&&&\ar@{..>}[u]&&&&&&&&\ar@{..>}[u]&&&\\
&&&&&\ar@{..>}[r]&v\ar@{..>}[u]\ar^{e_1}[rr]&&v\ar^{f_2}[uuuu]\ar^{e_1}[rr]&&v\ar@{..>}[u]\ar@{..>}[r]&&&&&\\
&&&&&&\ar@{..>}[u]&&&&\ar@{..>}[u]&&&&&\\
&&&&&&&\ar@{..>}[r]&v\ar@{..>}[r]\ar^{f_2}[uu]&&&&&&&\\
&&&&&&&&\ar@{..>}[u]&&&&&&&
}$
\\
$\xymatrix@C=20pt@R=20pt{
&&&&&\\
&\ar@{..>}[r]&v\ar^{e_1}[r]\ar@{..>}[u]&v\ar^{e_1}[r]\ar@{..>}[u]&v\ar@{..>}[r]\ar@{..>}[u]&\\
(F_2,\phi_2):&\ar@{..>}[r]&v\ar^{e_1}[r]\ar^{f_2}[u]&v\ar^{e_1}[r]\ar^{f_2}[u]&v\ar^{f_2}[u]\ar@{..>}[r]&.\\
&\ar@{..>}[r]&v\ar^{e_1}[r]\ar^{f_2}[u]&v\ar^{e_1}[r]\ar^{f_2}[u]&v\ar^{f_2}[u]\ar@{..>}[r]&\\
&&\ar@{..>}[u]&\ar@{..>}[u]&\ar@{..>}[u]&
}$
\end{tabular}\\
\begin{tabular}{ll}
$\xymatrix@C=20pt@R=20pt{
(F_3,\phi_3):&\ar@{..>}[r]&v\ar@(ul,ur)^{f_2}\ar^{e_1}[r]&v\ar@(ul,ur)^{f_2}\ar^{e_1}[r]&v\ar@(ul,ur)^{f_2}\ar@{..>}[r]&.}$
&$\xymatrix@C=20pt@R=20pt{
(F_4,\phi_4):&\ar@{..>}[r]&v\ar@(ul,ur)^{e_1}\ar^{f_2}[r]&v\ar@(ul,ur)^{e_1}\ar^{f_2}[r]&v\ar@(ul,ur)^{e_1}\ar@{..>}[r]&.
}$\\
$\xymatrix@C=20pt@R=20pt{
(F_5,\phi_5):&&v\ar@(dl,ul)^{f_2}\ar@/^1.3pc/^{e_1}[r]&v\ar@(dr,ur)_{f_2}\ar@/^1.3pc/^{e_1}[l]&.}$
&$\xymatrix@C=20pt@R=20pt{
(F_6,\phi_6):&&v\ar@(dl,ul)^{e_1}\ar@/^1.3pc/^{f_2}[r]&v\ar@(dr,ur)_{e_1}\ar@/^1.3pc/^{f_2}[l]&.}$\\
$\xymatrix@C=20pt@R=20pt{
(F_7,\phi_7):&&v \ar@(dl,ul)^{e_1} \ar@(dr,ur)_{f_2}&.}$&
\end{tabular}\\
\\
\\
All the representation graphs $(F_i,\phi_i)~(1\leq i\leq 7)$ lie in the same $\leftrightharpoons$-equivalence class $C$. One checks easily that $(F_1,\phi_1)\cong (T_C,\xi_C)$ and $(F_7,\xi_7)\cong (S_C,\zeta_C)$ (cf. Example \ref{exuniv}). Moreover, we have
\[\xymatrix@C=20pt@R=20pt{
&(F_1,\xi_1)\ar[d]&\\
&(F_2,\xi_2)\ar[dl]\ar[dr]&\\
(F_3,\xi_3)\ar[d]&&(F_4,\xi_4)\ar[d]\\
(F_5,\xi_5)\ar[dr]&&(F_6,\xi_6)\ar[dl]\\
&(F_7,\xi_7)&
}\]
where an arrow $(F_i,\phi_i)\longrightarrow(F_j,\phi_j)$ means that $(F_j,\phi_j)$ is a quotient of $(F_i,\phi_i)$.
\end{example}


\section{Weighted Leavitt path algebras and their representations}\label{weightedrbgh}

Weighted Leavitt path algebras were introduced in~\cite{H-1}, as a generalisation of Leavitt path algebras. They are a model for the algebras $L_K(n,m)$ which could not be obtained via the classical theory of Leavitt path algebras. The structure of weighted Leavitt path algebras was further investigated in~\cite{HPR, Raimund1,Raimund3,Raimund4}. Further generalisations of these algebras were also carried out in \cite{Raimund2,mohan}. 

We recall the notion of weighted Leavitt path algebras.

\begin{definition}\label{def3}
Let $(E,w)$ be a weighted graph.  The $K$-algebra $L_K(E,w)$ presented by the generating set $\{v,e_i,e_i^*\mid v\in E^0, e\in E^1, 1\leq i\leq w(e)\}$ and the relations
\begin{enumerate}
\item $uv=\delta_{uv}u\quad(u,v\in E^0)$,
\medskip
\item $s(e)e_i=e_i=e_ir(e),~r(e)e_i^*=e_i^*=e_i^*s(e)\quad(e\in E^1, 1\leq i\leq w(e))$,
\medskip
\item 
$\sum\limits_{1\leq i\leq w(v)}e_i^*f_i= \delta_{ef}r(e)\quad(v\in E^0, e,f\in s^{-1}(v))$ and
\medskip 
\item $\sum\limits_{e\in s^{-1}(v)}e_ie_j^*= \delta_{ij}v\quad(v\in E^0,1\leq i, j\leq w(v))$
\end{enumerate}
is called the {\it weighted Leavitt path algebra} of $(E,w)$.  In relations (3) and (4) we set $e_i$ and $e_i^*$ zero whenever $i > w(e)$. 
\end{definition}

Throughout the paper, we continue to denote a weighted graph $(E,w)$ by $E$ and the weighted Leavitt path algebra $L_K(E,w)$ by $L_K(E)$.  We also interchangeably use the terminology the Leavitt path algebra of a weighted graph instead of a weighted Leavitt path algebra. If all the edges of the weighted graph $E$ have weight one, then $L_K(E,w)$ coincides with the classical Leavitt path algebra $L_K(E)$ (see Example~\ref{exex1}). 

Weighted Leavitt path algebras are involutary graded rings with unit if $E^0$ is finite and local units otherwise. In fact,  the weighted Leavitt path algebra $L_K(E)$ is a $\mathbb Z^n$-graded ring, where $n=\max\{w(e) \mid e \in E\}$. 
The grading is defined as follows:  Consider the free ring $F_E$ generated by  $\{v \mid v \in E^0\}$, $\{ e_1,\dots e_{w(e)} \mid e \in E^1 \}$ and $\{ e_1^*,\dots e_{w(e)}^* \mid e \in E^1 \}$, with the coefficients in $K$, set for $v \in E^0$, $\deg(v)=0$, for $e \in E^1$, $1\leq i\leq w(e)$, $\deg(e_i)=(0,\dots,0,1,0,\dots)$ and $\deg(e_i^*)=(0,\dots,0,-1,0,\dots) \in  \mathbb Z^n$, where 
$1$ and $-1$ are in the $i$-th component, respectively. This defines a $\mathbb Z^n$-grading on the free ring $F_E$.  (Note that $n$ could be infinite).   Since all the relations in Definition~\ref{def3} involve  homogeneous elements, so the quotient of $F_E$  by the homogeneous ideal generated by these relations, i.e., $L_K(E)$ is also a $\mathbb Z^n$-graded ring.

\begin{example}\label{wlpapp}
The Leavitt path algebra of a weighted graph consisting of one vertex and $n+k$ loops of weight $n$ 
is isomorphic to the Leavitt algebra $L_K(n,n+k)$. To show this, let $E^1=\{y_1,\dots,y_{n+k}\}$ with $w(y_i)=n$, $1\leq i\leq n+k$. Denote the $n$ edges corresponding to the (structure) edge $y_i\in E^{1}$ by $\{y_{1i},\dots,y_{ni}\}$. We visualise this data as follows:
\begin{equation*}
\xymatrix{
& \bullet \ar@{.}@(l,d) \ar@(ur,dr)^{y_{11},\dots,y_{n1}} \ar@(r,d)^{y_{12},\dots,y_{n2}} \ar@(dr,dl)^{y_{13},\dots,y_{n3}} \ar@(l,u)^{y_{1,n+k},\dots,y_{n,n+k}}& 
}
\end{equation*}
Set $x_{sr}=y_{rs}^*$ for $1\leq r\leq n$ and $1\leq s\leq n+k$ and arrange the $y$'s and $x$'s in the matrices
\begin{equation*} 
Y=\left( 
\begin{matrix} 
y_{11} & y_{12} & \dots & y_{1,n+k}\\ 
y_{21} & y_{22} & \dots & y_{2,n+k}\\ 
\vdots & \vdots & \ddots & \vdots\\ 
y_{n1} & y_{n2} & \dots & y_{n,n+k} 
\end{matrix} 
\right), \qquad 
X=\left( 
\begin{matrix} 
x_{11\phantom{n+{},}} & x_{12\phantom{n+{},}} & \dots & x_{1n\phantom{n+{},}}\\ 
x_{21\phantom{n+{},}} & x_{22\phantom{n+{},}} & \dots & x_{2n\phantom{n+{},}}\\ 
\vdots & \vdots & \ddots & \vdots\\ 
x_{n+k,1} & x_{n+k,2} & \dots & x_{n+k,n} 
\end{matrix} 
\right) 
\end{equation*} 
Then Condition~(3) of Definition~\ref{def3} precisely says that $Y\cdot X=I_{n,n}$ and Condition~(4) is equivalent to 
$X\cdot Y=I_{n+k,n+k}$ which are the generators of $L_K(n,n+k)$. 
\end{example}

\begin{example}\label{exex1}
Let $(E,w)$ be a weighted graph where $w: E\rightarrow \N$ is the constant map $w(e)=1$, for all $e \in E$. Then $L_K(E,w)$ coincides with  the usual Leavitt path algebra $L_K(E)$. 
\end{example}

\begin{example}
Consider a weighted graph with one vertex and one edge $e$ of weight $n$   and an unweighted graph $F$ with one vertex and $n$ edges $\{e_1,\dots,e_n\}$. Then the map 
\begin{align*}
L_K(E,w) & \rightarrow L_K(F),\\
e_i &\mapsto e_i^*
\end{align*}
induces an isomorphism, i.e., 
\[\xymatrix{
L_K\big(\!\!\!\!\!\!\!\!\!\!\!\!\!& \bullet\ar@(ul,ur)^{\mathtt{e_{1},\dots,e_{n}}}
},w\big)\cong
\xymatrix{
L_K\big(\!\!\!&   \bullet \ar@{.}@(l,d) \ar@(ur,dr)^{e_{1}} \ar@(r,d)^{e_{2}} \ar@(dr,dl)^{e_{3}} 
\ar@(l,u)^{e_{n}}& 
}\big).\]
Note that this isomorphism is not graded as $L(E,w)$ is $\mathbb Z^n$-graded, whereas $L(F)$ is just $\mathbb Z$-graded. 
\end{example}

\subsection{Representations of weighted Leavitt path algebras}
Let $E$ be a weighted graph and $L_K(E)$ the weighted Leavitt path algebra associated to $E$. 
In this section, for a representation graph $(F,\phi)$ of $E$, we construct a $L_K(E)$-module $V_F$. This gives rise to a functor from the category of representations of the graph $E$ (see~\S\ref{catrepa}) to the category of (right) $L_K(E)$-modules:
\begin{align*}
V: \RG(E) &\longrightarrow \Modd L_K(E),\\
(F,\phi) & \longmapsto V_F.\notag
\end{align*}
 
We will then investigate the functor $V$.  In Theorem~\ref{thmirr}, we show that $V_F$ is a simple $L_K(E)$-module if and only if $F$ is an irreducible representation.  In Theorem~\ref{thmdecomp} we further show that a universal representation $T$ of $E$ gives an indecomposable $L_K(E)$-module $V_T$. 
 


For a representation graph $(F,\phi)$ of the weighted graph $E$,  let $V_F$ be the $K$-vector space with basis $F^0$.  We show that there is a natural action of the weighted Leavitt path algebra $L_K(E)$ on the $K$-vector space $V_F$, resulting $V_F$ to be a $L_K(E)$-module.  For any $u\in E^0$, $e\in E^1$ and $1\leq i\leq w(e)$,  define endomorphisms $\sigma_u,\sigma_{e_i},\sigma_{e_i^*}\in \End_K(V_F)$ by
\begin{align*}
\sigma_u(v)          &=\begin{cases}v,\quad\quad\quad             &\text{if }\phi^0(v)=u \\0,                                           & \text{else} \end{cases},\\
\sigma_{e_i}(v)     &=\begin{cases}r_F(f),\quad                       &\text{if }\exists f\in s_F^{-1}(v):~\phi^1(f)=e_i\\0,     & \text{else} \end{cases},\\
\sigma_{e_i^*}(v)  &=\begin{cases}s_F(f),\quad                        &\text{if }\exists f\in r_F^{-1}(v):~\phi^1(f)=e_i\\0,    & \text{else} \end{cases},
\end{align*}
where $v
\in F^0$. It follows from the universal property of $L_K(E)$ that there is an algebra homomorphism 
\begin{equation}\label{direst}
\pi:L_K(E)\longrightarrow \End_K(V_F)^{\op},
\end{equation}
 such that $\pi(u)=\sigma_u$, $\pi(e_i)=\sigma_{e_i}$ and $\pi(e_i^*)=\sigma_{e_i^*}$. We refer to this representation, the {\it representation of $L_K(E)$ defined by $(F,\phi)$}. Clearly $V_F$ becomes a right $L_K(E)$-module by defining $x \cdot a:= \pi(a)(x)$, for any $a\in L_K(E)$ and $x\in V_F$.

The following lemma describes the action of monomial elements of the weighted Leavitt path algebra $L_K(E)$ on the $K$-vector space $V_F$. Note that by Lemma \ref{lemwell}, for any $p\in \Path(\hat E_d)$ and $u\in F^0$ there is at most one $v\in F^0$ such that $p\in\phi({}_u\!\Path_v(F_d))$.
 
\begin{lemma}\label{lemaction}
Let $E$ be a weighted graph and $(F,\phi)$ a representation of $E$ with the induced map $\phi: \Path(F_d)\rightarrow \Path(\hat E_d)$. 
If $p\in \Path(\hat E_d)$ and $u\in F^0$, then
\[u\cdot p=\begin{cases}
v,\quad&\text{if }p\in \phi({}_u\!\Path_v(F_d)),\text{ for some }v\in F^0,\\
0,\quad&\text{otherwise.}
\end{cases}\]
\end{lemma}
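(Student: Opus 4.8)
The plan is to prove the formula by induction on the length of the path $p\in\Path(\hat E_d)$, exploiting the fact that the action of $L_K(E)$ on $V_F$ is defined so that single generators $e_i$ and $e_i^*$ slide a vertex $u$ along the unique edge of $F_d$ over them (when such an edge exists). First I would reduce to the case where $p$ is a single edge of $\hat E_d$, i.e. $p=e_i$ or $p=e_i^*$ for some $e\in E^1$ and $1\le i\le w(e)$. For these base cases the formula is essentially the definition of $\sigma_{e_i}$ and $\sigma_{e_i^*}$: by Definition~\ref{defwp}(1) (resp.\ (2)) there is \emph{at most one} $f\in s_F^{-1}(u)$ (resp.\ $f\in r_F^{-1}(u)$) with $\phi^1(f)=e_i$, and $u\cdot e_i=r_F(f)$ precisely when such an $f$ exists, which is exactly the condition $e_i\in\phi({}_u\!\Path_{r_F(f)}(F_d))$. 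Since $V_F$ is a \emph{right} module via the opposite algebra homomorphism $\pi$ of~\eqref{direst}, I would be careful to record that $u\cdot(pq)=(u\cdot p)\cdot q$, so that reading off the action of a monomial $p=x_1\cdots x_n$ corresponds to applying $\sigma_{x_n}\circ\cdots\circ\sigma_{x_1}$ in the correct order.

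For the inductive step, write $p=x_1\cdots x_n x_{n+1}$ with each $x_j\in\hat E_d^1$, and set $p'=x_1\cdots x_n$. By the inductive hypothesis, $u\cdot p'$ equals the unique $v'\in F^0$ with $p'\in\phi({}_u\!\Path_{v'}(F_d))$ if such a $v'$ exists, and $0$ otherwise. In the vanishing case, applying $\sigma_{x_{n+1}}$ to $0$ gives $0$, and correspondingly no lift of $p$ starting at $u$ can exist (a lift of $p$ restricts to a lift of $p'$, using Lemma~\ref{lemwell} to see that any lift of $p'$ from $u$ is unique); so both sides are $0$. In the surviving case, $u\cdot p=(u\cdot p')\cdot x_{n+1}=v'\cdot x_{n+1}$, and I invoke the base case at the vertex $v'$: this is $r_{F_d}(f)$ for the unique $f\in s_{F_d}^{-1}(v')$ over $x_{n+1}$ when one exists, and $0$ otherwise. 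It remains to match this with the existence of a lift of the whole path $p$. If $f$ exists, concatenating the unique lift of $p'$ ending at $v'$ with $f$ produces a path in ${}_u\!\Path(F_d)$ over $p$, whose range is the claimed $v$; conversely any lift of $p$ from $u$ must end by traversing such an $f$, and uniqueness of all these lifts is guaranteed by Lemma~\ref{lemwell}.

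The main obstacle I anticipate is purely bookkeeping rather than conceptual: keeping the direction of composition straight because of the opposite-algebra convention in~\eqref{direst}, and correctly handling the two sub-cases $x_{n+1}\in\hat E^1$ versus $x_{n+1}\in(\hat E^1)^*$ in both the edge-existence argument and the identification of the source/range vertices in $F_d$. The clean way to manage this is to phrase everything in terms of lifts of paths in the double graph $F_d\to\hat E_d$ and to appeal throughout to Lemma~\ref{lemwell}, which guarantees that a lift with prescribed source (or range) is unique whenever it exists; this is exactly the star/co-star injectivity that makes the ``at most one $v$'' remark preceding the statement legitimate and turns the whole argument into a routine induction.
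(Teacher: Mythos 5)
Your proposal is correct and follows essentially the same route as the paper's proof: induction on the length of $p$, with the base case split into $p=e_i$ and $p=e_i^*$ read off from the definitions of $\sigma_{e_i}$ and $\sigma_{e_i^*}$, and the inductive step decomposing $p$ at its last letter and invoking the uniqueness of lifts (Lemma~\ref{lemwell}) to match the two vanishing/surviving cases. The only detail the paper treats explicitly that you gloss over is the trivial case where $p$ is a vertex, which is immediate.
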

\begin{proof}
Let $p\in \Path(\hat E_d)$ and $u\in F^0$. Suppose that $p=u'$ for some $u'\in E^0$. If $p\in \phi({}_u\!\Path_v(F_d))$, for some $v\in F^0$, then clearly $u=v$ and $u'=\phi^0(v)$. Hence $u\cdot p=u=v$ as desired. If there is no $v\in F^0$ such that $p\in \phi({}_u\!\Path_v(F_d))$, then clearly $u'\neq\phi^0(v)$. Hence $u\cdot p=0$ as desired.\\
Suppose now that $p$ is nontrivial, say $p=x_1\dots x_n$ where $n\geq 1$ and $x_1,\dots,x_n\in \hat E_d^1$. We proceed by induction on $n$.

Case $n=1$: Suppose $p=e_i$ for some $e\in E^1$ and $1\leq i\leq w(e)$. If $p\in \phi({}_u\!\Path_v(F_d))$ for some $v\in F^0$, then clearly $p=\phi^1(f)$ for some $f\in F^1$ such that  $s_F(f)=u$ and $r_F(f)=v$. Hence $u\cdot p=r_F(f)=v$ as desired. If there is no $v\in F^0$ such that $p\in \phi({}_u\!\Path_v(F_d))$, then clearly there is no $f\in s_F^{-1}(v)$ such that $\phi^1(f)=e_i$. Hence $u\cdot p=0$ as desired.

Suppose $p=e_i^*$ for some $e\in E^1$ and $1\leq i\leq w(e)$. If $p\in \phi({}_u\!\Path_v(F_d))$ for some $v\in F^0$, then clearly $e_i=\phi^1(f)$ for some $f\in F^1$ such that $s_F(f)=v$ and $r_F(f)=u$. Hence $u\cdot p=s_F(f)=v$ as desired. If there is no $v\in F^0$ such that $p\in \phi^{-1}({}_u\!\Path_v(F_d))$, then clearly there is no $f\in r_F^{-1}(u)$ such that $\phi^1(f)=e_i$. Hence $u\cdot p=0$ as desired.

Case $n\to n+1$:
Suppose $p=x_1\dots x_nx_{n+1}$. If $p\in \phi({}_u\!\Path_v(F_d))$ for some $v\in F^0$, then clearly $x_1\dots x_n\in \phi({}_{u}\!\Path_{u'}(F_d))$ for some $u'\in F^0$ and moreover $x_{n+1}\in \phi({}_{u'}\!\Path_{v}(F_d))$. By the induction assumption we have $v \cdot x_1\dots x_n=u'$. It follows from the case $n=1$ that $u\cdot p=u'\cdot x_{n+1}=v$. \\
Suppose now that there is no $v\in F^0$ such that $p\in \phi({}_u\!\Path_v(F_d))$. If $x_1\dots x_n\in \phi^{-1}({}_{u}\!\Path_{u'}(F_d))$ for some $u'\in F^0$, then there is no $v\in F^0$ such that $x_{n+1}\in \phi({}_{u'}\!\Path_{v}(F_d))$. Hence, by the induction assumption, $u\cdot p=u'\cdot  x_{n+1}=0$ as desired. If there is no $u'\in F^0$ such that $x_1\dots x_n\in \phi({}_{u}\!\Path_{u'}(F_d))$, then, by the induction assumption, $u\cdot p=(u\cdot x_1\dots x_n)\cdot x_{n+1}=0\cdot x_{n+1}=0$ as desired. 
\end{proof}

\begin{corollary}\label{coraction}
If $a=\sum_{p\in \Path(\hat E_d)}k_pp\in L_K(E)$ and $u\in F^0$, then
\[u\cdot a=\sum_{v\in F^0}\Big(\sum_{p\in\phi({}_u\!\Path_v(F_d))}k_p\Big)v.\]
\end{corollary}

Let $T\rightarrow E$ be a covering of the weighted graph $E$ and $F$ a representation for $T$. By Lemma~\ref{hfgfghhffeee}, the $L_K(T)$-module $V_F$ is also a $L_K(E)$-module. In fact, for a particular kind of coverings, namely universal coverings, we can say more.

\begin{corollary}
Let $E$ be a weighted graph,  $T$ the universal cover of  $E$ and $F \hookrightarrow \hat T$ a connected representation graph of $T$. Then any $L_K(T)$-submodule $A\subseteq V_F$ is also an $L_K(E)$-module. 
\end{corollary}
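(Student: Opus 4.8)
The plan is to compare the $L_K(T)$- and $L_K(E)$-actions on the common underlying space $V_F$ one generator at a time. Writing $\psi\colon T\to E$ for the covering map, Lemma~\ref{hfgfghhffeee} realises the $L_K(E)$-module structure on $V_F$ through the representation graph $(F,\sigma)$ with $\sigma=\hat\psi\,\phi$, where $\hat\psi\colon\hat T\to\hat E$ is the induced covering preserving tags. Since $A$ is an $L_K(T)$-submodule it is in particular a $K$-subspace of $V_F$ (being finitely supported, each $a\in A$ satisfies $a=a\cdot\varepsilon$ for the local unit $\varepsilon=\sum_{w\in\phi^0(\supp(a))}w\in L_K(T)$, whence $ka=a\cdot(k\varepsilon)\in A$ for $k\in K$), so it suffices to prove that $a\cdot g\in A$ for every $a\in A$ and every generator $g\in\{v,e_i,e_i^*\}$ of $L_K(E)$, with $v\in E^0$, $e\in E^1$ and $1\le i\le w(e)$.

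First I would establish, on each basis vertex $x\in F^0$, the three fiber identities
\[
x\cdot v=\sum_{w\in(\psi^0)^{-1}(v)}x\cdot w,\qquad
x\cdot e_i=\sum_{t\in\psi^{-1}(e)}x\cdot t_i,\qquad
x\cdot e_i^*=\sum_{t\in\psi^{-1}(e)}x\cdot t_i^*,
\]
in which the right-hand actions are those of $L_K(T)$. Although the index sets may be infinite, the crucial observation is that for a fixed $x$ at most one summand on each right-hand side is nonzero. For the vertex identity this is clear, since $x\cdot w\neq0$ forces $w=\phi^0(x)$; for $x\cdot e_i$ it follows from Definition~\ref{defwp}(1) for $(F,\phi)$, as there is a unique $f\in s_F^{-1}(x)$ of tag $i$, and it singles out the one $t=\mot(\phi^1(f))$ that can contribute. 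The delicate case — and the step I expect to be the main obstacle — is $x\cdot e_i^*$: a priori Definition~\ref{defwp}(2) for $(F,\phi)$ permits one edge into $x$ for each structure edge of $T$ lying over $e$, so several $t\in\psi^{-1}(e)$ might appear to contribute. This is precisely where the covering hypothesis enters: as $\psi\colon r^{-1}(\phi^0(x))\to r^{-1}(\sigma^0(x))$ is a bijection, there is exactly one $t\in\psi^{-1}(e)$ with $r(t)=\phi^0(x)$, so again a single summand survives. Granting this, each identity reduces summand-by-summand to $\hat\psi$ preserving tags together with the defining formulas for the action of $L_K(E)$ on a basis vertex.

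With the fiber identities in hand the conclusion is immediate. Fixing $a\in A$ and expanding by $K$-linearity, the identity for $g=e_i$ gives $a\cdot e_i=\sum_{t\in\psi^{-1}(e)}a\cdot t_i$, and since $a$ is finitely supported the ``at most one nonzero summand per basis vertex'' observation shows that $a\cdot t_i=0$ for all but finitely many $t$. Thus $a\cdot e_i$ is a finite sum of elements $a\cdot t_i$, each lying in $A$ because $A$ is an $L_K(T)$-submodule; hence $a\cdot e_i\in A$. The same argument applied to the vertex and the $e_i^*$ identities gives $a\cdot v\in A$ and $a\cdot e_i^*\in A$. As $\{v,e_i,e_i^*\}$ generates $L_K(E)$, this yields $A\cdot L_K(E)\subseteq A$, so $A$ is an $L_K(E)$-submodule of $V_F$, as claimed.
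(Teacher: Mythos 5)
Your proof is correct, and it reaches the conclusion by a route that is organized differently from the paper's. The paper's own argument decomposes $a\cdot e_i$ over the support of $a$: writing $a=\sum_j k_j v_j$ and letting $f_j$ be the unique edge of $F$ emitted by $v_j$ lying over the given generator, it uses the hypothesis $F\hookrightarrow \hat T$ (injectivity of $\phi$) to see that $a\cdot\phi^1(f_j)=k_jr(f_j)$, so that each individual term of $a\cdot e_i$ is already an element of $A$. You instead group the terms by the fibre of the covering, proving the identity $a\cdot e_i=\sum_{t\in\psi^{-1}(e)}a\cdot t_i$ (and its analogues for vertices and ghost edges) with only finitely many nonzero summands. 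This buys two things. First, your argument nowhere uses the injectivity of $\phi$, nor the universality of $T$: it establishes the stronger statement that the conclusion holds for an arbitrary covering $\psi:T\to E$ and an arbitrary connected representation graph $F$ of $T$. Second, you treat the $e_i^*$ case explicitly and correctly identify that this is the only place where the covering axiom (bijectivity of $\psi$ on range fibres) is genuinely needed, beyond its role in Lemma~\ref{hfgfghhffeee}; the paper dismisses this case as ``similar,'' which hides the asymmetry between conditions (1) and (2) of Definition~\ref{defwp}. The preliminary local-unit argument showing that $A$ is a $K$-subspace is not needed if one reads ``submodule'' as already entailing closure under scalars, but it is harmless and makes the statement robust under the weaker reading.
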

\begin{proof}
Since by Lemma~\ref{hfgfghhffeee}, $F$ is a representation for $E$, $V_F$ is a an $L_K(E)$-module as well as an $L_K(T)$-module. Let $A \subseteq V_F$ be a $L_K(T)$-submodule and 
$a=\sum_i k_i v_i\in A$. It is enough to show that for an edge $e\in L_K(E)$, we have $ae, ae^* \in A$. We have $ae=\sum_i k_i v_ie=\sum_i k_i r(f_i)$, for those $i$ such that $\sigma(f_i)=e$. Since $F \hookrightarrow T$, we then have $af_i=k_ir(f_i)\in A$ as $A$ is a $L_K(T)$-module. Thus $ae=\sum_i k_i r(f_i) \in A$. The argument for $ae^*$ is similar.  
\end{proof}

Recall that a weighted Leavitt path algebra is a $\mathbb Z^n$-graded ring, where $n$ is the maximum weight in the graph (see ~\S\ref{weightedrbgh}). In particular, Leavitt path algebras (associated to graphs of weight one) are $\mathbb Z$-graded. This grading has played a crucial role in determining the algebraic structure of these algebras. We next characterise the representation graphs whose associated $L_K(E)$-modules are graded.  To do this we will use the following statement whose proof is easy and we will leave it to the reader.

\begin{lemma}\label{windynhgf}
 Let $A$ be a $\Gamma$-graded ring and $M$ a right $A$-module, where $\Gamma$ is a totally ordered abelian group. If there is a homogeneous element $a\in A$ with $\deg(a)\not = 0$ and $0\not = m\in M$ such that $ma=m$, then $M$ cannot be $\Gamma$-graded.  
 \end{lemma}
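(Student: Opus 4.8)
The plan is to argue by contradiction using the fact that a nontrivial homogeneous relation of the form $ma=m$ forces an element of $M$ to live simultaneously in two distinct homogeneous components, which is impossible in a graded module. First I would suppose, for contradiction, that $M$ carries a $\Gamma$-grading $M=\bigoplus_{\gamma\in\Gamma}M_\gamma$ compatible with the $\Gamma$-grading on $A$, meaning $M_\gamma A_\delta\subseteq M_{\gamma+\delta}$ for all $\gamma,\delta\in\Gamma$.

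Next I would decompose the given nonzero element $m$ into its homogeneous components, writing $m=\sum_{\gamma}m_\gamma$ with $m_\gamma\in M_\gamma$ and only finitely many nonzero. Since $a$ is homogeneous of degree $d:=\deg(a)$, right multiplication by $a$ shifts degrees by $d$, so the homogeneous decomposition of $ma$ is $ma=\sum_\gamma m_\gamma a$ with $m_\gamma a\in M_{\gamma+d}$. Comparing the homogeneous components of the equation $ma=m$ then yields, for every $\gamma\in\Gamma$, the identity $m_{\gamma-d}a=m_\gamma$ (equating the $\gamma$-component of $ma$, which is $m_{\gamma-d}a$, with the $\gamma$-component of $m$, which is $m_\gamma$).

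The crux of the argument is to exploit this recursion together with the total order on $\Gamma$ and the hypothesis $d\neq 0$. Choose $\gamma_0$ to be the largest degree in the (finite, nonempty) support of $m$; say $m_{\gamma_0}\neq 0$. Suppose first that $d>0$ (the case $d<0$ is symmetric, or one replaces $a$ by considering the smallest degree instead). Applying the component identity at $\gamma=\gamma_0+d$ gives $m_{\gamma_0}a=m_{\gamma_0+d}$. But $\gamma_0+d>\gamma_0$, so $\gamma_0+d$ lies strictly above the support of $m$, whence $m_{\gamma_0+d}=0$. Iterating $m_{\gamma_0}a=0$ back into the relation, or more directly re-examining the identity $m_{\gamma_0}=m_{\gamma_0-d}a$ at the top of the support, shows that the relation $ma=m$ cannot hold while $m\neq 0$: the topmost nonzero component of $m$ cannot be reproduced, since the only component mapping into degree $\gamma_0$ under $\cdot a$ sits in degree $\gamma_0-d$, and one tracks that the ``highest'' component has nowhere to come from.

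The main obstacle, and the only point requiring care, is organizing this extremal-degree bookkeeping so that the total order genuinely delivers a contradiction rather than merely a consistent infinite system; this is exactly where $d\neq 0$ and the finiteness of the support are both essential, and where the total ordering (as opposed to a mere partial order) on $\Gamma$ is used to pick a well-defined maximal (or minimal) element of the support. Since the statement is flagged as having an easy proof left to the reader, I would present the extremal-component contradiction cleanly and not belabor the symmetric $d<0$ case.
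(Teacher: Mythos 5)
The paper gives no proof of this lemma (it is explicitly left to the reader), so your argument must stand on its own. Your setup is correct and is the natural one: assume a grading $M=\bigoplus_\gamma M_\gamma$, decompose $m=\sum_\gamma m_\gamma$, and compare homogeneous components of $ma=m$ to get $m_{\gamma-d}a=m_\gamma$ for all $\gamma$, where $d=\deg(a)$. The gap is in the concluding step. With $d>0$ and $\gamma_0=\max\supp(m)$, the identity $m_{\gamma_0}=m_{\gamma_0-d}a$ gives no immediate contradiction: since $\gamma_0-d<\gamma_0$, the degree $\gamma_0-d$ may perfectly well lie in the support of $m$, so the claim that the ``highest component has nowhere to come from'' is false as a one-step assertion. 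Likewise the fact you do correctly derive, $m_{\gamma_0}a=m_{\gamma_0+d}=0$, is true but by itself yields nothing; it only says that multiplication by $a$ kills the top component. You gesture at ``iterating'' and ``tracking,'' but the mechanism that actually produces the contradiction is never carried out.

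There are two clean ways to close the argument. Either work at the \emph{bottom} of the support when $d>0$: with $\gamma_1=\min\supp(m)$ one has $m_{\gamma_1}=m_{\gamma_1-d}a=0$ because $\gamma_1-d<\gamma_1$ lies strictly below the support, contradicting $m_{\gamma_1}\neq 0$ (dually, use the top of the support when $d<0$ --- note this is the opposite pairing of extremum and sign from the one your parenthetical suggests). Or genuinely iterate at the top: $m_{\gamma_0}\neq 0$ forces $m_{\gamma_0-d}\neq 0$, hence $m_{\gamma_0-2d}\neq 0$, and so on, producing the infinite strictly decreasing chain $\gamma_0>\gamma_0-d>\gamma_0-2d>\cdots$ inside the finite support, which is absurd. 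Either version uses exactly the ingredients you correctly identify as essential (the total order, $d\neq 0$, and finiteness of the support); you just need to pick the right extremal end, or actually run the induction, rather than asserting its conclusion.
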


Let $F$ be a connected representation graph for the weighted graph $E$. Since $F$ is an immersion into $\hat E$, we have a monomorphism $\pi(F)\rightarrow \pi(\hat E)$. Combining this with the length map~(\ref{hfbvhdfhddds}), we obtain a homomorphism $| \, | : \pi(F) \rightarrow \mathbb Z^n$. This allows us to give a criterion on when the $L_K(E)$-module $V_F$ is a graded module.

\begin{theorem}\label{gradedrepres}
Let $E$ be a weighted graph and $F$ a connected representation graph of $E$. Then the $L(E)$-module $V_F$ is graded if and only if for (reduced) paths $p,q$ of $F$ with $s(p)=s(q)$ and $r(p)=r(q)$, we have $|p|=|q|$.  
\end{theorem}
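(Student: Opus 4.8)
The plan is to exploit the length map $|\cdot|\colon\pi(F)\to\mathbb Z^n$ obtained by composing the monomorphism $\pi(F)\hookrightarrow\pi(\hat E)$ with~(\ref{hfbvhdfhddds}), and to observe that the stated condition is precisely the assertion that this map is trivial, i.e.\ that any two paths in $F_d$ sharing both endpoints have the same length. Since reducing a path in $F_d$ alters neither its endpoints nor its length (a backtracking pair $x x^{*}$ contributes $0$ to the length), it is immaterial whether one quantifies over reduced paths or over all paths, and I will use whichever is convenient. Throughout I regard $\mathbb Z^n$ as a totally ordered abelian group via the lexicographic order, so that Lemma~\ref{windynhgf} is available.

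For the forward implication I would argue by contraposition. If the condition fails, choose paths $p,q$ in $F_d$ with $u:=s(p)=s(q)$, $r(p)=r(q)$ and $|\phi(p)|\neq|\phi(q)|$, and form the closed path $c=pq^{*}\in{}_u\!\Path_u(F_d)$. Then $\phi(c)=\phi(p)\phi(q)^{*}$ is a homogeneous element of $L_K(E)$ of degree $|\phi(p)|-|\phi(q)|\neq 0$, while Lemma~\ref{lemaction} gives $u\cdot\phi(c)=u\neq 0$. Lemma~\ref{windynhgf}, applied with $\Gamma=\mathbb Z^n$, then shows that $V_F$ cannot be $\mathbb Z^n$-graded. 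Note that no reducedness of $c$ is needed here, since Lemma~\ref{lemaction} applies to arbitrary paths in $F_d$.

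For the converse, assuming the length condition, I would construct the grading directly. Fix a base vertex $u_0\in F^0$; as $F$ is connected, every $v\in F^0$ is the range of some path $p$ from $u_0$ in $F_d$, and I set $\deg(v):=|\phi(p)|$. The hypothesis makes this value independent of $p$, so $\deg\colon F^0\to\mathbb Z^n$ is well defined, and declaring $(V_F)_\gamma$ to be the $K$-span of $\{v\in F^0\mid\deg(v)=\gamma\}$ yields a vector-space decomposition $V_F=\bigoplus_{\gamma}(V_F)_\gamma$, because the basis $F^0$ is thereby partitioned into homogeneous pieces. It remains to check compatibility with the action. Since $L_K(E)_\delta$ is spanned by the paths $x\in\Path(\hat E_d)$ with $|x|=\delta$, and the action is $K$-linear by Corollary~\ref{coraction}, it suffices to verify $v\cdot x\in(V_F)_{\gamma+\delta}$ for such an $x$ and a vertex $v$ with $\deg(v)=\gamma$. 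By Lemma~\ref{lemaction} the product $v\cdot x$ is either $0$, which lies in every homogeneous component, or a vertex $w$ witnessed by a path $t\colon v\to w$ in $F_d$ with $\phi(t)=x$; concatenating a path $u_0\to v$ with $t$ then gives $\deg(w)=\deg(v)+|\phi(t)|=\gamma+\delta$, as required.

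The main obstacle is the bookkeeping in the converse: one must ensure both that the degree of each basis vertex is genuinely forced (it is, being pinned down relative to $u_0$ by the well-definedness argument) and that $L_K(E)_\delta$ is actually spanned by degree-$\delta$ monomials. This last point is where the presentation of $L_K(E)$ as a quotient of the free algebra by \emph{homogeneous} relations is essential, as it guarantees that projecting the expansion of Corollary~\ref{coraction} onto degree $\delta$ keeps only the degree-$\delta$ paths. By contrast the forward direction is comparatively immediate once Lemma~\ref{windynhgf} is invoked; the sole point requiring care there is that $\mathbb Z^n$ be equipped with a total order, which is exactly the hypothesis of that lemma.
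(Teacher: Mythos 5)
Your proposal is correct and follows essentially the same route as the paper's proof: the forward direction via the closed path $pq^*$, whose image is a homogeneous monomial of nonzero degree fixing a vertex, combined with Lemma~\ref{windynhgf}; and the converse by fixing a base vertex, defining $\deg$ on $F^0$ via path lengths, and verifying compatibility of the action through Lemma~\ref{lemaction}. Your explicit remark that $\mathbb Z^n$ must be given a total order (e.g.\ lexicographic) for Lemma~\ref{windynhgf} to apply is a point the paper leaves implicit.
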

\begin{proof}
Suppose  for any  paths $p,q$ of $F$, with $s(p)=s(q)$ and $r(p)=r(q)$ we have $|p|=|q|$.  
Choose a base vertex $v$ in $F$ and assigne $\deg(v)=0$. Define a map $\deg:F^0\rightarrow \mathbb Z^n$, by $\deg(w)= |q|$, where $q$ is a (reduced) path with $s(q)=v$ and $r(q)=w$. Since $F$ is connected and the length of paths connecting $v$ to $w$ are the same,   the map is well-defined. For $\alpha \in \mathbb Z^n$, define 
\[(V_F)_\alpha:=\Big \{ \sum k_i v_i\,  | \deg(v_i)=\alpha \Big \}.\]
Clearly $V_F=\bigoplus_{\alpha \in \mathbb Z^n} (V_F)_\alpha$. Next we check that $(V_F)_\alpha L_K(E)_\beta \subseteq (V_F)_{\alpha+\beta}$, where $\alpha,\beta \in \mathbb Z^n$. It is enough to show that $w p \in (V_F)_{\alpha+\beta}$, where $w \in (V_F)_{\alpha} \cap F^0$ and $p$ is a monomial in $L_K(E)_\beta$. Either $wp=0$ or by the definition of representation, one can lift $p$ to a unique path $q$ in $F$ such that $|q|=|p|$ and by Lemma~\ref{lemaction}, $wq=z$, where $z=r(q)$. Since there is a path $t$ with $s(t)=v$, $r(t)=w$ and $|t|=\alpha$, we have $s(tq)=v$, $r(tq)=z$. Since $|tq|=|t|+|q|$ we have $z\in (V_F)_{\alpha+\beta}$.  

Conversely, suppose $V_F$ is a graded $L_K(E)$-module. If there are paths $p,q$ in $F$ with $s(p)=s(q)$ and $r(p)=r(q)$ but  $|p|\not =|q|$, then $s(p)\not = pq^* \in \pi_1(F, s(p))$. Let $t$ be the image of $pq^*$ in $\hat E$. Then by the definition of representation, $v t= v$. Since $|pq^*|\not =0$, it follows from Lemma~\ref{windynhgf} that $V_F$ is not graded, which is a contradiction.  
\end{proof}


\begin{example}
Consider the weighted graph $ \xymatrix{
 v \ar@(dl,ul)^{\mathtt{e_1,e_2}} \ar@(dr,ur)_{\mathtt{f_1,f_2}} &
}$ from Example~\ref{excatone}. Theorem~\ref{gradedrepres} will then give the $L_K(2,2)$-modules $V_{F_1}$ and $V_{F_2}$ are $\mathbb Z^2$-graded  whereas $V_{F_3},V_{F_4}, V_{F_5}, V_{F_6}$ and the simple module $V_{F_7}$ are not graded.   

Consider the graph $E=\xymatrix{\bullet \ar@(u,r)^{\mathtt{e}} \ar@(ur,rd)^{\mathtt{f}} \ar@(r,d)^{\mathtt{g}} &}$ from the Introduction and its representation graphs $F$ and $G$ (see~(\ref{wlpa4}) and (\ref{wlpa45})). Theorem~\ref{gradedrepres} will then give the $L_K(E)$-module $V_G$ is $\mathbb Z$-graded whereas $V_F$ is not a graded module.  

\end{example}

\begin{corollary}\label{acyclicrep}
If $T$ is a universal representation of $E$ then $V_T$ is graded. 
\end{corollary}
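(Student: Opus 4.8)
The plan is to deduce this directly from the criterion of Theorem~\ref{gradedrepres}: one must check that for any two (reduced) paths $p,q$ in $T$ with $s(p)=s(q)$ and $r(p)=r(q)$ one has $|p|=|q|$. The essential observation is that a universal representation $T=(T_C,\xi_C)$ is, by its very construction in~\S\ref{subsecnn}, a \emph{tree}: its vertices $v_p$ are indexed bijectively by the reduced paths $p\in\phi({}_u\!\Path(F_d))_{\nb}$, and each edge $e_{x_1\dots x_n}$ joins $v_{x_1\dots x_n}$ to $v_{x_1\dots x_{n-1}}$, that is, it realises the immediate prefix relation. Thus the root $v_{\phi(u)}$ aside, every vertex has a unique neighbour closer to the root, so $T$ is connected and acyclic (this is also the reason behind the label of the corollary).

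First I would record the standard consequence that in a tree there is a unique reduced path between any two vertices. Hence if $p$ and $q$ are reduced paths in $T_d$ with $s(p)=s(q)$ and $r(p)=r(q)$, then in fact $p=q$, and the equality $|p|=|q|$ is immediate. Since by Theorem~\ref{gradedrepres} it suffices to verify the length condition on reduced paths, this already settles the claim. Alternatively, and without passing to reduced representatives, I would argue via the length homomorphism: given coterminal paths $p,q$ in $T_d$, the concatenation $pq^*$ is a closed path based at $s(p)$ and so represents a class in $\pi(T)$. As $T$ is a tree, $\pi(T)$ is trivial, so $pq^*$ reduces to the trivial path by successive cancellation of backtracks $ee^*$. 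The length map $|\,\cdot\,|$ of~(\ref{hfbvhdfhddds}) is a homomorphism sending each backtrack to $|e_i|+|e_i^*|=0$, hence it is invariant under such reductions, and therefore $|pq^*|=0$. Using $|q^*|=-|q|$ together with additivity, $|p|-|q|=|p|+|q^*|=|pq^*|=0$, i.e.\ $|p|=|q|$. Theorem~\ref{gradedrepres} then yields that $V_T$ is graded.

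I do not expect a genuine obstacle here: the only points needing care are the (essentially combinatorial) verification that $T_C$ is acyclic from its defining prefix structure, and the remark that $|\,\cdot\,|$ is well defined and reduction-invariant on paths via the immersion $\phi$ and the monomorphism $\pi(T)\to\pi(\hat E)$. Once these are in place the corollary follows at once.
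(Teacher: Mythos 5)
Your proposal is correct and follows essentially the same route as the paper: the paper's proof simply notes that $\pi(T)=1$ (since $T$ is a tree) and that this implies the length condition of Theorem~\ref{gradedrepres}. You have merely spelled out the details — the tree structure of $(T_C,\xi_C)$ coming from the prefix relation, and the invariance of the length homomorphism under backtrack cancellation — that the paper leaves implicit.
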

\begin{proof}
If $T$ is a universal cover, then $\pi(T)=1$. This implies the condition of Theorem~\ref{gradedrepres}. 
\end{proof}

\subsection{Irreducible representations of weighted Leavitt path algebras}
In this section we characterise those representation graphs that induce simple modules on the level of Leavitt path algebras. We continue to denote by $E$ a weighted graph, $(F,\phi)$ a representation graph for $E$ and $V_F$ the $L_K(E)$-module defined by $(F,\phi)$. In the following theorem we show that a representation graph $F$ is irreducible (see Definition~\ref{defrepirres}) if and only if $V_F$ is a simple $L_K(E)$-module.


\begin{theorem}\label{thmirr} Let $E$ be a weighted graph, $(F,\phi)$ a representation graph for $E$ and $V_F$ the $L_K(E)$-module defined by $F$.
Then the following are equivalent.
\begin{enumerate}[\upshape(i)]
\item The $L_K(E)$-module $V_F$ is simple.
\smallskip
\item The representation graph $F$ is connected and 
\[\text{for any }x\in V_F\setminus\{0\}, \text{ there is }a\in L_K(E), \text{ such that } x \cdot a \in F^0.\]

\item The representation graph  $F$ is connected and  
\[\text{for any }x\in V_F\setminus\{0\},\text{ there is a }k\in K \text{ and a } p\in \Path(\hat E_d), \text{ such that } x \cdot kp \in F^0. \]

\item The representation graph  $F$ is an irreducible representation.
\end{enumerate}
\end{theorem}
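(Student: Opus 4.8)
The plan is to prove the cycle of implications (iii)$\,\Rightarrow\,$(ii)$\,\Rightarrow\,$(i)$\,\Rightarrow\,$(iv)$\,\Rightarrow\,$(iii), which is more economical than checking each equivalence on its own. The implication (iii)$\Rightarrow$(ii) is immediate, since $kp$ is a particular element of $L_K(E)$. For (ii)$\Rightarrow$(i) I would first record that when $F$ is connected every basis vertex is a cyclic generator: given $v,w\in F^0$, connectivity provides a path $q\in{}_v\!\Path_w(F_d)$, and Lemma~\ref{lemaction} then gives $v\cdot\phi(q)=w$, so $vL_K(E)=V_F$. Hence if $N\subseteq V_F$ is a nonzero submodule, choosing $0\neq x\in N$ and applying (ii) yields $a$ with $x\cdot a=v\in F^0\cap N$, whence $V_F=vL_K(E)\subseteq N$ and $V_F$ is simple.

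For (i)$\Rightarrow$(iv) I would verify the two clauses of Definition~\ref{defrepirres} separately. Connectedness is forced, because a proper connected component $C$ of a disconnected $F$ spans a proper nonzero submodule (the action of a monomial cannot move a vertex out of its component). For the separation clause, suppose toward a contradiction that $\phi({}_u\!\Path(F_d))=\phi({}_v\!\Path(F_d))$ with $u\neq v$; then $\phi^0(u)=\phi^0(v)$, so $(u-v)\cdot\phi^0(u)=u-v$ and the submodule $N=(u-v)L_K(E)$ is nonzero. The key device is the augmentation functional $\varepsilon:V_F\to K$ sending every basis vertex to $1$. By Lemma~\ref{lemaction}, equality of the two path-image sets forces $u\cdot p$ and $v\cdot p$ to be simultaneously $0$ or simultaneously single vertices for each monomial $p$, so $\varepsilon\big((u-v)\cdot p\big)=0$; thus $N\subseteq\ker\varepsilon\subsetneq V_F$, contradicting simplicity.

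The heart of the argument, and the step I expect to be hardest, is (iv)$\Rightarrow$(iii), which I would prove by induction on the number $m$ of basis vertices in the support of $x$. The base case $m=1$ is handled by $x=k_1u_1$, where $x\cdot\big(k_1^{-1}\phi^0(u_1)\big)=u_1\in F^0$. For the inductive step, irreducibility gives $\phi({}_{u_1}\!\Path(F_d))\neq\phi({}_{u_2}\!\Path(F_d))$, so some monomial $P$ lies in exactly one of these sets and therefore annihilates one of $u_1,u_2$ while preserving the other. Applying $P$ to $x$ produces a nonzero vector of strictly smaller support, where Lemma~\ref{lemwell} ensures that the surviving images $u_i\cdot P$ are pairwise distinct; the induction hypothesis then supplies $k'p'$ with $(x\cdot P)\cdot k'p'\in F^0$, and concatenation gives the desired scaled monomial acting via $x\cdot\big(k'(Pp')\big)\in F^0$.

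The delicate points in this last step are exactly where I expect the obstacle to lie. First, the separation is guaranteed only for the chosen pair $u_1,u_2$, so I must argue carefully that $P$ keeps $u_1$ and kills $u_2$ (or vice versa), genuinely dropping the support below $m$, rather than merely altering the vector; distinctness of the remaining images, needed to count the support correctly, is precisely the co-star injectivity recorded in Lemma~\ref{lemwell}. Second, I must confirm that $Pp'$ is again a path in $\Path(\hat E_d)$, which holds because the composability condition $r(P)=s(p')$ is automatic once $p'$ acts nontrivially on $x\cdot P$. Closing the cycle at (iii) then yields the full equivalence.
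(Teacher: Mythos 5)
Your proposal is correct and follows the same overall architecture as the paper: the same cycle of four implications (merely listed in a different starting order), with (iii)$\Rightarrow$(ii) trivial, (ii)$\Rightarrow$(i) via connectedness making every vertex a cyclic generator, and (iv)$\Rightarrow$(iii) by the same support-reduction induction using a separating monomial $P\in\phi({}_{u_1}\!\Path(F_d))\setminus\phi({}_{u_2}\!\Path(F_d))$; your explicit appeal to Lemma~\ref{lemwell} to rule out collisions among the surviving images $u_i\cdot P$ makes precise a point the paper passes over silently, and your remark that composability of $Pp'$ is automatic is likewise a harmless elaboration.

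The one place you genuinely diverge is the separation clause of (i)$\Rightarrow$(iv). The paper reaches the contradiction $v=\sum_{s}k_s(u_s-v_s)$ and then invokes a standalone combinatorial fact (Lemma~\ref{lembasis}, proved by an induction on the number of distinct vertices involved) asserting that such a sum of differences of basis elements can never equal a basis element. You instead introduce the augmentation functional $\varepsilon:V_F\to K$ sending each basis vertex to $1$ and observe that, since equality of the path-image sets forces $u\cdot p$ and $v\cdot p$ to vanish or survive simultaneously for every monomial $p$ (Lemma~\ref{lemaction}), the entire submodule $(u-v)L_K(E)$ lands in $\ker\varepsilon$, which misses $F^0$; combined with $(u-v)\cdot\phi^0(u)=u-v\neq 0$ this exhibits a proper nonzero submodule. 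This is a clean repackaging: $\varepsilon$ is exactly the invariant that makes Lemma~\ref{lembasis} true, so your route proves slightly less (you do not need the lemma in its free-standing form) but does so in one line rather than by induction. Both arguments are valid; yours is the more transparent, while the paper's lemma is stated so that it can be reused verbatim in the appendix (Theorem~\ref{thm0.3}).
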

\begin{proof} 
(i) $\Longrightarrow$ (iv). If $C$ is a connected component of $F$, then the subspace of $V_F$ generated by $C^0$ is clearly invariant under the action of $L:=L_K(E)$. Hence $F$ must be connected. Now assume that there are $u\neq v\in F^0$ such that $\phi({}_u\!\Path(F_d))= \phi({}_v\!\Path(F_d))$. Consider the submodule $(u-v)L\subseteq V_F$. Since $V_F$ is simple by assumption, we have $(u-v)L=V_F$. Hence there is an $a\in L$ such that $(u-v)a=v$. Clearly there is an $n\geq 1$, $k_1,\dots,k_n\in K^\times$ and pairwise distinct $p_1,\dots,p_n\in \Path(\hat E_d)$ such that $a=\sum_{s=1}^nk_sp_s$. We may assume that $(u-v)p_s\neq 0$, for any $1\leq s\leq n$. It follows from Lemma \ref{lemaction} that $p_s\in \phi({}_u\!\Path(F_d))= \phi({}_v\!\Path(F_d))$, for any $s$ and moreover, that $(u-v)p_s=u_s-v_s$ for some distinct $u_s,v_s\in F^0$. Hence 
\[v=(u-v)a=(u-v)(\sum_{s=1}^nk_sp_s)=\sum_{s=1}^nk_s(u_s-v_s)\]
which contradicts Lemma \ref{lembasis}.

\medskip 
(iv) $\Longrightarrow$ (iii). Let $x\in V_F\setminus \{0\}$. Then there is an $n\geq 1$, pairwise disjoint $v_1,\dots, v_n\in F^0$ and $k_1,\dots,k_n\in K^\times$ such that $x=\sum_{s=1}^nk_sv_s$. If $n=1$, then $x\cdot k_1^{-1}\phi^{0}(v_1)= v_1$. Suppose now that $n>1$. By assumption, we can choose a $p_1\in \phi({}_{v_1}\!\Path(F_d))$ such that $p_1\not\in\phi({}_{v_2}\!\Path(F_d))$. Clearly $x\cdot p_1\neq 0$  is a linear combination of at most $n-1$ vertices from $F^0$. Proceeding this way, we obtain paths $p_1,\dots,p_m$ such that $x\cdot p_1\dots p_m=kv$ for some $k\in K^\times$ and $v\in F^0$. Hence $x\cdot k^{-1}p_1\dots p_m=v$.

\medskip 

(iii) $\Longrightarrow$ (ii). Trivial.

\medskip 

(ii) $\Longrightarrow$ (i). Let $U\subseteq V_F$ be a nonzero $L_K(E)$-submodule and $x\in U\setminus\{0\}$. By assumption, there is an $a\in L_K(E)$ and a $v\in F^0$ such that $v=x\cdot a\in U$. Let now $v'$ be an arbitrary vertex in $F^0$. Since by assumption $F$ is connected, there is a $p\in {}_{v}\!\Path_{v'}(F_d)$. It follows from Lemma \ref{lemaction} that $v'=v\cdot \phi(p)\in U$. Hence $U$ contains $F^0$ and thus $U=V_F$.
\end{proof}

\subsection{The fullness of the functor $V$}

Let $\alpha:(F,\phi)\rightarrow(G,\psi)$ be a morphism in $\RG(E)$. Then it induces a surjective $L_K(E)$-module homomorphism $V_\alpha:V_F\rightarrow V_G$ such that $V_\alpha(u)=\alpha^0(u)$, for any $u\in F^0$.

The example below shows that $V$ is not full, namely, there can be $L_K(E)$-module homomorphisms $V_F\rightarrow V_G$ that are not induced by a morphism $(F,\phi)\rightarrow(G,\psi)$. 

\begin{example}\label{excat11}
Suppose $E$ is the weighted graph 
\[E:\xymatrix{
 u \ar@/^1.3pc/^{\mathtt{e_1,e_2}}[r]\ar@/_1.3pc/_{\mathtt{f_1,f_2}}[r] &v
}.\]
Consider the representation graphs $(F,\phi)$ and $(G,\psi)$ for $E$ given below.\\
\begin{tabular}{ll}
$\xymatrix@C=20pt@R=20pt{
&u_1\ar^{e_1}[rr]\ar^(.4){f_2}[ddrr]&&v_1\\
(F,\phi):&&&\\
&u_2\ar^{e_1}[rr]\ar[uurr]^(.4){f_2}&&v_2.
}$
&\quad\quad 
$\xymatrix@C=20pt@R=20pt{&&\\
(G,\psi):&u \ar@/^1.3pc/^{e_1}[r]\ar@/_1.3pc/_{f_2}[r] &v.\\&&}$
\end{tabular}\\
\\

Since $(G,\psi)$ is a quotient of $(F,\phi)$, by Theorem \ref{thmadm}, there is a morphism $\alpha:(F,\phi)\rightarrow(G,\psi)$, which induces a homomorphism $V_\alpha:V_F\rightarrow V_G$. Although $(F,\phi)$ is not a quotient of $(G,\psi)$, there is an $L_K(E)$-module homomorphism $\sigma:V_G\rightarrow V_F$ in the opposite direction such that $\sigma(u)=u_1+u_2$ and $\sigma(v)=v_1+v_2$. Note that $V_\alpha$ and $\sigma$ are not inverse to each other.
\end{example}

\begin{question}\label{Q1}
Can it happen that $V_F\cong V_G$ as $L_K(E)$-modules although $(F,\phi)\not\cong(G,\psi)$ in $\RG(E)$?
\end{question}


The authors do not know the answer to Question \ref{Q1}. But we will show that if $V_F\cong V_G$, then $(F,\phi)$ and $(G,\psi)$ must be equivalent.

\begin{lemma}\label{lemmodule}
Let $(F,\phi)$ and $(G,\psi)$ be objects in $\RG(E)$ and let $\sigma:V_F\rightarrow V_G$ be a $L_K(E)$-module homomorphism. Let $u\in F^0$ and $\sigma(u)=\sum_{s=1}^nk_sv_s$, where $n\geq 1$, $k_1,\dots,k_n\in K^\times$ and $v_1,\dots,v_n$ are pairwise distinct vertices from $G^0$. Then $\phi({}_u\!\Path(F_d))=\psi({}_{v_s}\!\Path(G_d))$, for any $1\leq s\leq n$.
\end{lemma}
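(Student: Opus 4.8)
The plan is to test the module homomorphism $\sigma$ against the path elements of $L_K(E)$ and to translate the resulting vanishing conditions into containments of the path-sets $\phi({}_u\!\Path(F_d))$ and $\psi({}_{v_s}\!\Path(G_d))$, which are then upgraded to equalities by Lemma~\ref{lempath}. The starting observation, drawn from Lemma~\ref{lemaction}, is that for any vertex $w$ of a representation graph and any $p\in\Path(\hat E_d)$ the product $w\cdot p$ is either a single basis vertex (precisely when $p$ lies in the image of the $\hat E_d$-paths issuing from $w$) or $0$; thus for $w=u$ one has $u\cdot p\neq 0$ if and only if $p\in\phi({}_u\!\Path(F_d))$, and likewise for each $v_s$ in $G$.

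First I would fix an arbitrary $p\in\Path(\hat E_d)$, regard it as an element of $L_K(E)$, and use that $\sigma$ is a module map to write $\sigma(u\cdot p)=\sigma(u)\cdot p=\sum_{s=1}^{n}k_s\,(v_s\cdot p)$. The crucial point is to understand when the right-hand side vanishes. Here Lemma~\ref{lemwell} enters: if for distinct indices $s\neq t$ both $v_s\cdot p$ and $v_t\cdot p$ were equal to a common vertex $w$, then the lifts $q_s\in{}_{v_s}\!\Path_w(G_d)$ and $q_t\in{}_{v_t}\!\Path_w(G_d)$ of $p$ would share the same range and satisfy $\psi(q_s)=\psi(q_t)=p$, forcing $q_s=q_t$ and hence $v_s=v_t$, a contradiction. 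Consequently the nonzero terms among $v_1\cdot p,\dots,v_n\cdot p$ are pairwise distinct basis vectors, so that $\sum_{s}k_s\,(v_s\cdot p)=0$ holds if and only if every $v_s\cdot p=0$, since all $k_s\in K^\times$ preclude cancellation.

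Combining these two facts yields the key implication in one direction. If $p\notin\phi({}_u\!\Path(F_d))$, then $u\cdot p=0$, whence $\sigma(u)\cdot p=\sigma(u\cdot p)=0$, and by the no-cancellation observation every $v_s\cdot p=0$, i.e. $p\notin\psi({}_{v_s}\!\Path(G_d))$ for each $s$. Taking contrapositives, I obtain for each fixed $s$ the containment $\psi({}_{v_s}\!\Path(G_d))\subseteq\phi({}_u\!\Path(F_d))$. Finally, since $(F,\phi)$ and $(G,\psi)$ are both objects of $\RG(E)$, Lemma~\ref{lempath}, applied with the roles of the two representation graphs interchanged, promotes this containment to the desired equality $\psi({}_{v_s}\!\Path(G_d))=\phi({}_u\!\Path(F_d))$.

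The main obstacle is that only this single containment is available directly: the reverse inclusion cannot be argued by the same test, because $\sigma$ need not be injective and might conceivably send a reachable vertex $w=u\cdot p$ to $0$, so that $p\in\phi({}_u\!\Path(F_d))$ would not visibly force $\sigma(u)\cdot p\neq 0$. The resolution is precisely Lemma~\ref{lempath}, whose content is that for path-sets of representation graphs containment already implies equality; it is this rigidity, rather than any further computation, that closes the argument. A secondary technical point requiring care is the no-cancellation step, which rests essentially on the co-star injectivity encoded in Lemma~\ref{lemwell}.
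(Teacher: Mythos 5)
Your proof is correct and follows essentially the same route as the paper: test $\sigma$ against a path $p\notin\phi({}_u\!\Path(F_d))$, use Lemma~\ref{lemaction} to get $\sum_s k_s(v_s\cdot p)=0$ and hence each $v_s\cdot p=0$, deduce the containment $\psi({}_{v_s}\!\Path(G_d))\subseteq\phi({}_u\!\Path(F_d))$, and upgrade to equality via Lemma~\ref{lempath}. If anything, your explicit no-cancellation argument via Lemma~\ref{lemwell} (distinct $v_s$ cannot land on the same vertex under $\cdot\,p$) makes precise a step the paper leaves implicit.
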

\begin{proof} 
Let $p\in \Path(\hat E_d)$ such that $p\not\in\phi({}_u\!\Path(F_d))$. Then
\[0=\sigma(0)=\sigma(u\cdot p)=\sigma(u)\cdot p=\sum_{s=1}^nk_sv_s\cdot p=\sum_{s=1}^nk_s(v_s\cdot p)\]
by Lemma \ref{lemaction}. One more application of Lemma \ref{lemaction} gives that $v_s\cdot p =0$, for any $1\leq s\leq n$, whence $p\not\in \psi({}_{v_s}\!\Path(G_d))$ for any $1\leq s\leq n$. Hence we have shown that $\phi({}_u\!\Path(F_d))\supseteq\psi({}_{v_s}\!\Path(G_d))$, for any $1\leq s\leq n$. It follows from Lemma \ref{lempath} that $\phi({}_u\!\Path(F_d))=\psi({}_{v_s}\!\Path(G_d))$, for any $1\leq s\leq n$.
\end{proof}

\begin{proposition}\label{prop42}
Let $(F,\phi)$ and $(G,\psi)$ be objects in $\RG(E)$. If there is a nonzero $L_K(E)$-module homomorphism $\sigma:V_F\rightarrow V_G$, then $(F,\phi)\leftrightharpoons (G,\psi)$.
\end{proposition}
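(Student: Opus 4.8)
The plan is to derive this almost immediately from Lemma~\ref{lemmodule}, since that lemma already packages the essential computation. The only preliminary observation needed is that a nonzero module homomorphism cannot annihilate every basis vector: because $F^0$ is a $K$-basis of $V_F$, if we had $\sigma(u)=0$ for all $u\in F^0$, then $\sigma$ would vanish on a spanning set and hence be the zero map, contrary to hypothesis. So first I would fix a vertex $u\in F^0$ with $\sigma(u)\neq 0$.

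Next I would expand $\sigma(u)$ in the basis $G^0$ of $V_G$, writing $\sigma(u)=\sum_{s=1}^n k_s v_s$ with $n\geq 1$, scalars $k_1,\dots,k_n\in K^\times$, and pairwise distinct vertices $v_1,\dots,v_n\in G^0$. This is exactly the setup of Lemma~\ref{lemmodule}, whose conclusion gives $\phi({}_u\!\Path(F_d))=\psi({}_{v_s}\!\Path(G_d))$ for every $1\leq s\leq n$. In particular, taking $s=1$, we obtain vertices $u\in F^0$ and $v_1\in G^0$ with $\phi({}_u\!\Path(F_d))=\psi({}_{v_1}\!\Path(G_d))$.

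Finally I would simply invoke the definition of $\leftrightharpoons$ from \S\ref{subsecnn}: the existence of such a pair $(u,v_1)$ is precisely the statement that $(F,\phi)\leftrightharpoons(G,\psi)$, completing the proof. There is no real obstacle here; the entire weight of the argument has already been discharged in Lemma~\ref{lemmodule}, and the present proposition is essentially its immediate corollary once one notes that a nonzero homomorphism is nonzero on some basis vertex.
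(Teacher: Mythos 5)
Your proposal is correct and follows exactly the paper's route: the paper's own proof is the one-line observation that the proposition follows from Lemma~\ref{lemmodule} together with the definition of $\leftrightharpoons$, and your write-up merely spells out the (correct) details of picking a basis vertex $u$ with $\sigma(u)\neq 0$ and expanding $\sigma(u)$ in the basis $G^0$.
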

\begin{proof}
The  proposition follows from Lemma \ref{lemmodule} and the definition of the equivalence $\leftrightharpoons$.
\end{proof}

\begin{proposition}\label{prop43}
Let $(F,\phi)$ and $(G,\psi)$ be irreducible representation graphs for the weighted graph $E$. Then $V_F\cong V_G$ as $L_K(E)$-modules if and only if  $(F,\phi)\cong(G,\psi)$.
\end{proposition}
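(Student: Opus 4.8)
*Let $(F,\phi)$ and $(G,\psi)$ be irreducible representation graphs for the weighted graph $E$. Then $V_F\cong V_G$ as $L_K(E)$-modules if and only if $(F,\phi)\cong(G,\psi)$.*

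My reasoning follows.

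The plan is to prove the two directions separately, with the nontrivial content lying entirely in the forward direction. The backward direction is immediate: if $\alpha:(F,\phi)\to(G,\psi)$ is an isomorphism in $\RG(E)$, then by the discussion opening \S4.3 it induces a surjective $L_K(E)$-module homomorphism $V_\alpha:V_F\to V_G$ with $V_\alpha(u)=\alpha^0(u)$; since $\alpha^0$ is a bijection of bases $F^0\to G^0$, the map $V_\alpha$ is a $K$-linear isomorphism, hence an isomorphism of $L_K(E)$-modules.

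For the forward direction, suppose $\sigma:V_F\to V_G$ is an $L_K(E)$-module isomorphism. The first step is to show that $\sigma$ carries basis vertices to scalar multiples of basis vertices. Fix $u\in F^0$ and write $\sigma(u)=\sum_{s=1}^n k_s v_s$ with the $v_s\in G^0$ pairwise distinct and $k_s\in K^\times$. By Lemma~\ref{lemmodule}, $\psi({}_{v_s}\!\Path(G_d))=\phi({}_u\!\Path(F_d))$ for every $s$, so in particular $\psi({}_{v_1}\!\Path(G_d))=\psi({}_{v_2}\!\Path(G_d))$ whenever $n\geq 2$. But $(G,\psi)$ is irreducible, so by Definition~\ref{defrepirres}(2) distinct vertices of $G$ have distinct image path-sets; this forces $n=1$. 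Thus $\sigma(u)=k_u\,w(u)$ for a unique $w(u)\in G^0$ and $k_u\in K^\times$, and moreover $\psi({}_{w(u)}\!\Path(G_d))=\phi({}_u\!\Path(F_d))$. Applying the same argument to $\sigma^{-1}$ shows that $u\mapsto w(u)$ is a bijection $F^0\to G^0$.

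The second step is to upgrade this vertex bijection to a morphism in $\RG(E)$. I claim the natural candidate is $\alpha^0:=w$ together with the edge map forced by the representation-graph structure. Since $\phi({}_u\!\Path(F_d))=\psi({}_{w(u)}\!\Path(G_d))$ and both $\phi,\psi$ are star- and co-star-injective on the double graphs (Lemma~\ref{lemwell}), each edge $f\in F^1$ with $s(f)=u$ has $\phi^1(f)\in\psi({}_{w(u)}\!\Path(G_d))$, so there is a unique edge $g\in G^1$ with $s(g)=w(u)$ and $\psi^1(g)=\phi^1(f)$; set $\alpha^1(f):=g$. One checks that $\alpha=(\alpha^0,\alpha^1)$ preserves $s,r$ (using that $\phi(f)$ determines $r(f)$ up to the path-set equality, compatibly with $w$) and satisfies $\psi\alpha=\phi$, so $\alpha:(F,\phi)\to(G,\psi)$ is a morphism in $\RG(E)$. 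Because $\alpha^0=w$ and $\alpha^1$ are both bijections (the latter by symmetry, running the construction for $\sigma^{-1}$ to produce a two-sided inverse), the criterion recorded in \S\ref{catrepa} shows $\alpha$ is an isomorphism.

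The main obstacle I anticipate is the second step: verifying that the edge assignment $\alpha^1$ is well-defined, respects $r_F$ and $r_G$, and is genuinely a bijection rather than merely an injection. The cleanest way to dispatch this is to note that $\sigma$ being an $L_K(E)$-homomorphism already encodes the edge behaviour: for any $p\in\Path(\hat E_d)$, applying $\sigma$ to $u\cdot p$ and using Lemma~\ref{lemaction} forces $w(u)\cdot p = k_u^{-1}k_{u\cdot p}\,w(u\cdot p)$ whenever $u\cdot p\neq 0$, so the vertex map $w$ automatically intertwines the slide actions along every edge and its ghost. This makes the compatibility $\alpha^0\bigl(r_F(f)\bigr)=r_G\bigl(\alpha^1(f)\bigr)$ and the surjectivity of $\alpha^1$ fall out directly, bypassing any delicate case analysis on edge tags and structure edges.
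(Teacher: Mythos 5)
Your proof is correct, but it takes a genuinely different route from the paper's. The paper's argument is essentially two lines: a module isomorphism is in particular a nonzero homomorphism, so Proposition~\ref{prop42} places $(F,\phi)$ and $(G,\psi)$ in the same $\leftrightharpoons$-class $C$, and Corollary~\ref{corcat1} says that each class contains exactly one irreducible representation graph up to isomorphism, namely $(S_C,\zeta_C)$; hence $(F,\phi)\cong(S_C,\zeta_C)\cong(G,\psi)$. You instead build the graph isomorphism by hand from $\sigma$: Lemma~\ref{lemmodule} plus irreducibility of $G$ forces $\sigma(u)=k_u\,w(u)$ for a single vertex $w(u)$, the same argument applied to $\sigma^{-1}$ (using irreducibility of $F$) makes $w$ a bijection, and the identity $\sigma(u\cdot p)=\sigma(u)\cdot p$ together with Lemmas~\ref{lemwell} and \ref{lemaction} forces the edge map and its compatibility with ranges. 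Your approach is longer and leaves a few verifications as sketches (well-definedness of $\alpha^1$, the range compatibility, $\psi\alpha=\phi$), but all of them do go through, and it yields strictly more information than the paper's proof: it shows that any module isomorphism between irreducible representation graphs is, up to rescaling each basis vertex, induced by a graph isomorphism --- a point of independent interest given the paper's discussion of the non-fullness of $V$ and Question~\ref{Q1}. The paper's route buys brevity by reusing the structure theory of the subcategories $\RG(E)_C$ already established in \S\ref{subsecnn}.
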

\begin{proof}
Clearly isomorphic objects in $\RG(E)$ yield isomorphic $L_K(E)$-modules. Hence we only have to show that $V_F\cong V_G$ implies $(F,\phi)\cong(G,\psi)$. Suppose that $V_F\cong V_G$. Then $(F,\phi)\leftrightharpoons(G,\psi)$ by Proposition \ref{prop42}, i.e., $(F,\phi)$ and $(G,\psi)$ are in the same $\leftrightharpoons$-equivalence class $C$. Since they are irreducible, it follows from Corollary \ref{corcat1} that $(F,\phi)\cong (S_C,\zeta_C)\cong (G,\psi)$.
\end{proof}

\subsection{Indecomposability of $L_K(E)$-modules $V_F$}
Recall that for a ring $R$, an $R$-module is called \emph{indecomposable} if it is non-zero and cannot be written as a direct sum of two non-zero submodules. It is easy to see that an $R$-module $M$ is indecomposable if and only if the only idempotent elements of the endomorphism ring $\End_R(M)$ are $0$ and $1$. 

We will show in Theorem~\ref{thmdecomp} that for any universal representation $T$ of the graph $E$, the $L_K(E)$-module $V_T$ is indecomposable.  However, Example \ref{exdec} below shows that in general the indecomposibility of $L_K(E)$-module $V_F$, for a representation  $(F,\phi)$, depends on the ground field $K$.  \begin{example}\label{exdec}
Let $E$ be the graph 
\[\xymatrix{
 v \ar@(ul,dl)_{\mathtt{e}} \ar@(dr,ur)_{\mathtt{f}} &
}\]
and the graph $F$ below the representation graph for $E$:
\[\xymatrix@=10pt{
& && && &  & & & & &&&&&&\\
& & & && &  & & & & &&&&&\\
& & & v_7 \ar@{<.}[l] \ar@{<.}[ul] \ar[drr]^{e} &&&&&  &&&&&v_4 \ar@{<.}[r] \ar@{<.}[ur]   \ar[lld]_{e}& & &  \\
& & & && v_6\ar[rr]^{f}&& v_1 \ar@/^1.7pc/[rr]^{e} &&v_2 \ar@/^1.7pc/[ll]^{e} & & v_3\ar[ll]_{f}& & & && & & &\\
& & & v_8  \ar@{<.}[l]  \ar@{<.}[dl] \ar[urr]_{f}&&&&&&&& &&v_5 \ar@{<.}[r] \ar@{<.}[dr]   \ar[llu]^{f}& &  \\
& & & &&& &  & & & & &&&&&\\
}\]
If $\epsilon\in \End_L(V_F)$, then by Lemma \ref{lemmodule} there are $k,l\in K$ such that $\epsilon(v_1)=kv_1+lv_2= v_1\cdot(kv+le)$. Conversely, if $k,l\in K$, then there is a uniquely determined endomorphism $\epsilon\in \End_L(V_F)$ such that $\epsilon(v_1)=kv_1+lv_2$. This yields a bijection between $\End_L(V_F)$ and $K\times K$.

Let now $\epsilon\in \End_L(V_F)$ be the endomorphism corresponding to a pair $(k,l)\in K\times K$. Since $v_1$ generates the $L_K(E)$-module $V_F$, the endomorphism $\epsilon$ is idempotent if and only if  $\epsilon(v_1)=\epsilon^2(v_1)$. Clearly
\[\epsilon^2(v_1)=v_1\cdot(kv+le)^2 =v_1\cdot(k^2v+2kle+l^2e^2) =(k^2+l^2)v_1+2klv_2.\]
Thus $\epsilon$ is idempotent if and only if 
\begin{equation}
    \label{eq:lk:1}
    k=k^2+l^2\text{ and }l=2kl.
\end{equation}
If $2=0$ in $K$, then the only solutions for \eqref{eq:lk:1} are $(k,l)=(0,0)$ and $(k,l)=(1,0)$. The corresponding endomorphisms are $\epsilon=0$ and $\epsilon=\id_{V_F}$. Thus $V_F$ is an indecomposable $L_K(E)$-module in this case. If $2\neq 0$, then there are two more solutions for \eqref{eq:lk:1}, namely $(k,l)=(1/2,1/2)$ and $(k,l)=(1/2,-1/2)$. Thus $V_F$ is not indecomposable in this case. 
\end{example}

Fix an $\leftrightharpoons$-equivalence class $C$ and define $(T,\xi)=(T_C,\xi_C)$ and $(S,\zeta)=(S_C,\zeta_C)$ as in Subsection~\ref{subsecnn}. Since the $L_K(E)$-module $V_S$ is simple, it is indecomposable. We will show that the $L_K(E)$-module $V_T$ is indecomposable too.

Let $p,p'\in \Path(\hat E_d)$ be non-backtracking paths such that $r(p)=s(p')=v$. We define a non-backtracking path $p\ast p'\in \Path(\hat E_d)$ as follows. If $p,p'\in \Path(\hat E_d)\setminus \{v\}$ let $p\ast p'$ be the element of $\Path(\hat E_d)$ one gets by removing all subwords of the form $e_ie_i^*$ and $e_i^*e_i$ from the 
juxtaposition $pp'$ (if $p'=p^*$, then $p\ast p':=v)$. Moreover define $v\ast p:=p$ and $p\ast v:=p$.

Define the group 
\begin{equation}\label{gfreegreop}
G:=\big \{p\in \phi({}_u\!\Path(F_d))_{\nb}\mid v_p\sim v_{\phi(u)}\big \}.
\end{equation} In Proposition~\ref{propgroup}, we will show that $(G,\ast)$ is a free group. We need several lemmas. 

\begin{lemma}\label{lemT0}
If $p\in \phi({}_u\!\Path(F_d))_{\nb}$, then $v_p=v_{\phi(u)}\cdot p$.
\end{lemma}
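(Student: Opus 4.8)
The plan is to prove the identity by induction on the length $n$ of the reduced path $p=x_1\cdots x_n$, using the description of the action of $L_K(E)$ on the basis $T^0$ of $V_T$ provided by Lemma~\ref{lemaction} (applied to the representation graph $(T,\xi)=(T_C,\xi_C)$ in place of $(F,\phi)$). Concretely, for a vertex $w\in T^0$ and a monomial $x\in\Path(\hat E_d)$, Lemma~\ref{lemaction} computes $w\cdot x$ as the unique vertex $w'\in T^0$ with $x\in\xi({}_w\!\Path_{w'}(T_d))$ if such a vertex exists, and as $0$ otherwise; uniqueness is guaranteed by Lemma~\ref{lemwell}. Thus it suffices to exhibit, for the specific choice $w=v_{\phi(u)}$ and $x=p$, a path in $T_d$ from $v_{\phi(u)}$ to $v_p$ whose $\xi$-image is $p$, which the induction builds up one letter at a time.

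First I would record the auxiliary fact that every prefix $p_k:=x_1\cdots x_k$ $(0\le k\le n)$, with the convention $p_0:=\phi(u)$, again lies in $\phi({}_u\!\Path(F_d))_{\nb}$, so that $v_{p_k}\in T^0$ is a legitimate vertex. Indeed, writing $p=\phi(q)$ for some $q=f_1\cdots f_n\in{}_u\!\Path(F_d)$ with $\phi(f_i)=x_i$, each truncation $f_1\cdots f_k$ is a path of $F_d$ starting at $u$ and maps under $\phi$ to $p_k$; moreover a prefix of a reduced path is reduced. The base case $n=0$ is then immediate: $p=\phi(u)$ is the vertex $\phi^0(u)\in E^0$, and since $\xi^0(v_{\phi(u)})=\phi(u)$, the trivial path at $v_{\phi(u)}$ has $\xi$-image $\phi(u)$, whence $v_{\phi(u)}\cdot\phi(u)=v_{\phi(u)}=v_p$ by the trivial-path case of Lemma~\ref{lemaction}.

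For the inductive step, write $p=p'x_n$ with $p'=x_1\cdots x_{n-1}$. Since $p=p'\cdot x_n$ as a product of generators in $L_K(E)$ and $V_T$ is a right module, the induction hypothesis gives $v_{\phi(u)}\cdot p=(v_{\phi(u)}\cdot p')\cdot x_n=v_{p'}\cdot x_n$, so it remains to verify the single-step identity $v_{p'}\cdot x_n=v_p$. I would do this by reading the edge $e_p\in T^1$ directly off the definition of $(T,\xi)$, distinguishing the two cases for the last letter. If $x_n\in\hat E^1$, then $s(e_p)=v_{p'}$, $r(e_p)=v_p$ and $\xi^1(e_p)=x_n$, so $e_p\in s_T^{-1}(v_{p'})$ witnesses $v_{p'}\cdot x_n=r(e_p)=v_p$. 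If $x_n\in(\hat E^1)^*$, say $x_n=g^*$ with $g\in\hat E^1$, then $s(e_p)=v_p$, $r(e_p)=v_{p'}$ and $\xi^1(e_p)=x_n^*=g$, so $e_p\in r_T^{-1}(v_{p'})$ witnesses $v_{p'}\cdot g^*=s(e_p)=v_p$. Either way Lemma~\ref{lemaction} yields $v_{p'}\cdot x_n=v_p$, completing the induction.

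I do not expect a genuine obstacle here: the only real content is the bookkeeping in the single step, where one must keep straight the orientation convention of $T_d$ and match the two arms of the definition of $\xi^0,\xi^1$ against the real-edge and ghost-edge cases of Lemma~\ref{lemaction}. The one point deserving explicit mention is that it is precisely the reducedness of $p$ (and of all its prefixes $p_k$) that guarantees the vertices $v_{p_k}$ belong to $T^0$ and that the connecting edge $e_{p_k}$ exists in $T^1$, so that the path assembling $p$ inside $T_d$ is available at every step.
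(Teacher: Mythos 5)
Your proof is correct and takes essentially the same route as the paper: the paper writes down the explicit lift $q=\hat e_{x_1}\hat e_{x_1x_2}\cdots\hat e_{x_1\dots x_n}\in{}_{v_{\phi(u)}}\!\Path_{v_p}(T_d)$ with $\xi(q)=p$ and applies Lemma~\ref{lemaction} once, whereas you assemble the same sequence of edges one letter at a time by induction. The case analysis on the last letter and the observation that prefixes of $p$ remain in $\phi({}_u\!\Path(F_d))_{\nb}$ are both handled correctly.
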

\begin{proof}
Clearly $v_p=v_{\phi(u)}\cdot p$ if $p=\phi(u)$. Suppose now that $p=x_1\dots x_n$ for some $x_1,\dots ,x_n\in \hat E^1\cup (\hat E^1)^*$. For $1\leq i\leq n$ define 
\[\hat e_{x_1\dots x_i}:=\begin{cases}e_{x_1\dots x_i},&\text{ if }x_i\in \hat E^1,\\
e^*_{x_1\dots x_i},&\text{ if }x_i\in (\hat E^1)^*.\end{cases}.\]
One checks easily that $q:=\hat e_{x_1}\hat e_{x_1x_2}\dots \hat e_{x_1\dots x_n}\in {}_{v_{\phi(u)}\!\Path_{v_p}}(T_d)$ and $\xi(q)=p$. It follows from Lemma \ref{lemaction} that $v_{\phi(u)}\cdot p=v_p$. 
\end{proof}

\begin{lemma}\label{lemT1}
Let $p,p'\in G$. Then $p\ast p'$ is defined and $p \ast p'\in \phi({}_u\!\Path(F_d))_{\nb}$.
\end{lemma}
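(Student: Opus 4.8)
The plan is to transport this combinatorial statement into the module $V_T$ and to exploit that the action of $L_K(E)$ on $V_T$ factors through free reduction of paths. Write $o:=v_{\phi(u)}\in T^0$ for the base vertex. First I would record two facts. Every path in $\phi({}_u\!\Path(F_d))$ starts at $\phi^0(u)$, so $s(p')=\phi^0(u)$ automatically. For the other endpoint, the hypothesis $v_p\sim o$ means $\xi({}_{v_p}\!\Path(T_d))=\xi({}_{o}\!\Path(T_d))$; comparing the unique single-vertex member of each set (a nontrivial path of $T_d$ maps to a nontrivial path, so the only length-zero element of $\xi({}_{w}\!\Path(T_d))$ is $\xi^0(w)$) forces $r(p)=\xi^0(v_p)=\xi^0(o)=\phi^0(u)$. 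Hence $r(p)=\phi^0(u)=s(p')$, so $p\ast p'$ is defined; this settles the first assertion.

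For the membership I would first show $o\cdot(pp')\neq 0$. By Lemma~\ref{lemT0}, $o\cdot p=v_p$ and $o\cdot p'=v_{p'}\neq 0$, so Lemma~\ref{lemaction} gives $p'\in\xi({}_{o}\!\Path(T_d))$. Since $v_p\sim o$ we have $\xi({}_{v_p}\!\Path(T_d))=\xi({}_{o}\!\Path(T_d))\ni p'$, whence $v_p\cdot p'\neq 0$ by Lemma~\ref{lemaction}, and therefore $o\cdot(pp')=(o\cdot p)\cdot p'=v_p\cdot p'\neq 0$. The crux is then to see that replacing $pp'$ by its free reduction $p\ast p'$ does not change this value. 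For this I would prove the local cancellation fact that for any vertex $z\in T^0$ and any letter $x\in\hat E_d^1$ one has $z\cdot(xx^*)=z$ whenever $z\cdot x\neq 0$, and $z\cdot(xx^*)=0$ otherwise: in the first case the edge of $T_d$ realizing $z\cdot x$ is unique and its reverse realizes the return, which is exactly the star/co-star injectivity of Lemma~\ref{lemwell}. Consequently, deleting a backtracking pair from a word cannot change a nonzero action value (if it turned the value into $0$, the full product would already have vanished). Inducting over the deletions that turn $pp'$ into $p\ast p'$ then yields $o\cdot(p\ast p')=o\cdot(pp')\neq 0$.

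Finally I would convert this back into a statement about $\phi({}_u\!\Path(F_d))$. From $o\cdot(p\ast p')\neq 0$ and Lemma~\ref{lemaction} we get $p\ast p'\in\xi({}_{o}\!\Path(T_d))$. Applying Proposition~\ref{propuniv} to $(F,\phi)\in C$ produces a morphism $\alpha:(T_C,\xi_C)\to(F,\phi)$ with $\alpha^0(o)=u$, so Proposition~\ref{propcruc} gives the identification $\xi({}_{o}\!\Path(T_d))=\phi({}_u\!\Path(F_d))$. Hence $p\ast p'\in\phi({}_u\!\Path(F_d))$, and since it is reduced by the definition of $\ast$, it lies in $\phi({}_u\!\Path(F_d))_{\nb}$, as claimed. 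I expect the only genuine obstacle to be the local fact $z\cdot(xx^*)=z$ together with the induction built on it, i.e.\ making precise that the module action is invariant under free reduction; the rest is bookkeeping with Lemmas~\ref{lemT0} and \ref{lemaction} and the identification coming from Propositions~\ref{propuniv}--\ref{propcruc}. (An alternative, $T$-free, route would lift $p$ and $p'$ to paths in $F_d$ via star-injectivity and observe that backtracking pairs upstairs correspond bijectively to those downstairs, again by Lemma~\ref{lemwell}; the reduction argument is then carried out in $F_d$ directly.)
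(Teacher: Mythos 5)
Your argument is correct, but it reaches the conclusion by a genuinely different route from the paper's. The paper works combinatorially with explicit lifts: it produces a path $rr''\in{}_u\!\Path(F_d)$ with $\phi(rr'')=pp'$ (building $r''$ by transporting a $T_d$-path through the morphism $\alpha$ of Proposition~\ref{propuniv}), and then cancels backtracking pairs \emph{upstairs in $F_d$}, using the uniqueness clauses of Definition~\ref{defwp} to show that a subword $e_ie_i^*$ or $e_i^*e_i$ of $pp'$ is necessarily the image of a subword $ff^*$ or $f^*f$ of $rr''$; the reduced word is then visibly in $\phi({}_u\!\Path(F_d))_{\nb}$. You instead transport everything into the module $V_T$: you show $v_{\phi(u)}\cdot(pp')\neq 0$, prove that the action is invariant under free reduction via the local fact $z\cdot(xx^*)=z$ whenever $z\cdot x\neq 0$, and read off membership from Lemma~\ref{lemaction} together with the identification $\xi({}_{v_{\phi(u)}}\!\Path(T_d))=\phi({}_u\!\Path(F_d))$ coming from Propositions~\ref{propuniv} and~\ref{propcruc}. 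The two proofs rest on the same kernel — your local cancellation fact is precisely the Definition~\ref{defwp} uniqueness argument that the paper applies to $rr''$, just phrased for the endomorphisms $\sigma_{e_i},\sigma_{e_i^*}$ — so neither is logically stronger, but yours buys a cleaner treatment of the first assertion (comparing the length-zero members of $\xi({}_{v_p}\!\Path(T_d))$ and $\xi({}_{v_{\phi(u)}}\!\Path(T_d))$ to get $r(p)=\phi(u)$ directly, where the paper extracts this as a by-product of $pp'\in\phi({}_u\!\Path(F_d))$), and it isolates a reusable principle (invariance of the action under free reduction, which is implicitly what Proposition~\ref{propKG} needs later). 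The only step that genuinely requires care is that local fact, and your verification of it via the star/co-star injectivity of Lemma~\ref{lemwell} is sound; your parenthetical ``$T$-free'' alternative at the end is essentially the paper's proof.
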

\begin{proof}
Let $r,r'\in {}_u\!\Path(F_d)$ such that $\phi(r)=p$ and $\phi(r')=p'$. By the proof of Lemma \ref{lemT0} there are (uniquely determined) paths $q\in {}_{v_{\phi(u)}\!\Path_{v_{p}}}(T_d)$ and $q'\in {}_{v_{\phi(u)}\!\Path_{v_{p'}}}(T_d)$ such that $\xi(q)=p$ and $\xi(q')=p'$. 
Let $\alpha:(T,C)\to (F,\phi)$ be the morphism defined in the proof of Proposition \ref{propuniv}. One checks easily that $\alpha(q)=r$ and $\alpha(q')=r'$. Since $v_{p}\sim v_{\phi(u)}$, there is a $q''\in {}_{v_{p}}\!\Path(T_d)$ such that $\xi(q'')=p'=\xi(q')$. Set $r'':=\alpha(q'')$. Clearly $s(r'')=s(\alpha(q''))=\alpha(s(q''))=\alpha(v_{p'})=\alpha(r(q))=r(\alpha(q))=r(r)$. Hence $rr''\in {}_u\!\Path(F_d)$. It follows that $pp'=\phi(rr'')\in \phi({}_u\!\Path(F_d))$. In particular $r(p)=s(p')=\phi(u)$ and hence $p \ast p'$ is defined.\\
Write $pp'=x_1\dots x_n$ and $rr''=y_1\dots y_n$. Suppose that $x_jx_{j+1}=e_ie_i^*$ for some $1\leq j\leq n-1$, $e\in E^1$ and $1\leq i\leq w(e)$. Since $\phi(y_j)=x_j$ and $\phi(y_{j+1})=x_{j+1}$, there are $f,g\in F^1$ such that $y_j=f$ and $y_j=g^*$. Moreover $r(f)=r(g)$ and $\phi(f)=e_i=\phi(g)$. It follows from Definition \ref{defwp}(2) that $f=g$. Hence $y_1\dots y_{j-1}y_{j+1}\dots y_n\in {}_u\!\Path(F_d)$ and $\phi(y_1\dots y_{j-1}y_{j+1}\dots y_n)=x_1\dots x_{j-1}x_{j+1}\dots x_n$. Similarly, if $x_jx_{j+1}=e_i^*e_i$ for some $1\leq j\leq n-1$, $e\in E^1$ and $1\leq i\leq w(e)$, then $y_1\dots y_{j-1}y_{j+1}\dots y_n\in {}_u\!\Path(F_d)$ and $\phi(y_1\dots y_{j-1}y_{j+1}\dots y_n)=x_1\dots x_{j-1}x_{j+1}\dots x_n$ (follows from Definition \ref{defwp}(1)). Thus $p \ast p'\in \phi({}_u\!\Path(F_d))_{\nb}$.
\end{proof}

\begin{lemma}\label{lemT2}
Let $p,p'\in G$. Then $p\ast p'\in G$.
\end{lemma}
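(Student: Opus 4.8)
The plan is to reduce the claim to showing $v_{p\ast p'}\sim v_{\phi(u)}$, since Lemma~\ref{lemT1} already guarantees that $p\ast p'$ is defined and lies in $\phi({}_u\!\Path(F_d))_{\nb}$, so that $v_{p\ast p'}$ is a genuine vertex of $T$ and hence a candidate element of the group $G$ defined in~(\ref{gfreegreop}); by that definition, membership $p\ast p'\in G$ amounts precisely to this equivalence.

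The key intermediate identity I would establish is $v_p\cdot p'=v_{p\ast p'}$ in the module $V_T$. Indeed, by Lemma~\ref{lemT0} we have $v_p=v_{\phi(u)}\cdot p$, so $v_p\cdot p'=v_{\phi(u)}\cdot(pp')$, where $pp'$ is the juxtaposition, a legitimate path in $\hat E_d$ since $r(p)=s(p')=\phi^0(u)$. Because $p\in G$ gives $v_p\sim v_{\phi(u)}$, and $p'\in\xi({}_{v_{\phi(u)}}\!\Path(T_d))$ (as $v_{\phi(u)}\cdot p'=v_{p'}\neq 0$ by Lemma~\ref{lemT0} and Lemma~\ref{lemaction}), we get $p'\in\xi({}_{v_p}\!\Path(T_d))$, whence $v_p\cdot p'\neq 0$. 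Now, since $p$ and $p'$ are individually reduced, the only backtracking in $pp'$ sits at the seam, and $p\ast p'$ is obtained by cancelling it inward. The heart of the matter is to check that each such cancellation leaves the (nonzero) action unchanged: if at some vertex $x$ of $T$ one applies $e_ie_i^{*}$ (resp. $e_i^{*}e_i$) with $x\cdot e_i\neq 0$ (resp. $x\cdot e_i^{*}\neq 0$), then $x\cdot e_ie_i^{*}=x$, which is exactly the content of the uniqueness clause in Definition~\ref{defwp}(2) (resp. (1)) applied at $x$. Since the total action $v_{\phi(u)}\cdot(pp')$ is nonzero, every partial action is a single nonzero vertex, so these cancellations are legitimate and yield $v_{\phi(u)}\cdot(pp')=v_{\phi(u)}\cdot(p\ast p')$; the latter equals $v_{p\ast p'}$ by Lemma~\ref{lemT0}.

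With the identity in hand I would invoke the admissibility of $\sim$. The relation $\sim$ on $T^0$ is the maximal admissible equivalence relation on the representation graph $(T_C,\xi_C)$, so in particular it satisfies condition (2) of Definition~\ref{defadm}. By Lemma~\ref{lemaction}, the equalities $v_p\cdot p'=v_{p\ast p'}$ and $v_{\phi(u)}\cdot p'=v_{p'}$ supply paths $q\in{}_{v_p}\!\Path_{v_{p\ast p'}}(T_d)$ and $q'\in{}_{v_{\phi(u)}}\!\Path_{v_{p'}}(T_d)$ with $\xi(q)=\xi(q')=p'$. Since $v_p\sim v_{\phi(u)}$, admissibility gives $v_{p\ast p'}\sim v_{p'}$. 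Finally, $p'\in G$ means $v_{p'}\sim v_{\phi(u)}$, so transitivity yields $v_{p\ast p'}\sim v_{\phi(u)}$, that is, $p\ast p'\in G$.

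The step I expect to be the main obstacle is the identity $v_p\cdot p'=v_{p\ast p'}$, or more precisely the verification that reducing the backtracking at the seam does not alter the module action. Everything there hinges on reading off the cancellation $x\cdot e_ie_i^{*}=x$ from the two uniqueness conditions defining a representation graph, together with the observation that a nonzero total action forces every partial action to be a single nonzero vertex, so that no cancellation can accidentally pass through $0$. Once this is secured, the passage to $\sim$ is a clean application of admissibility and transitivity.
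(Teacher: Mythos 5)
Your proof is correct, and its first half runs parallel to the paper's: both reduce the claim, via Lemma~\ref{lemT1}, to showing $v_{p\ast p'}\sim v_{\phi(u)}$, and both hinge on the identity $v_{p\ast p'}=v_{\phi(u)}\cdot(pp')=v_p\cdot p'$ (the paper phrases this as $v_{p\ast p'}=r(qq'')=r(q'')$ for the lift $q''$ of $p'$ starting at $v_p$; your seam-cancellation argument $x\cdot e_ie_i^{*}=x$ via the uniqueness clauses of Definition~\ref{defwp} is a clean, explicitly module-theoretic way to justify the middle equality, which the paper leaves somewhat implicit, relying on the lifting computation already done in Lemma~\ref{lemT1}). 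Where you genuinely diverge is the final step. The paper proves $v_{p\ast p'}\sim v_{p'}$ by hand: it takes an arbitrary $p''\in\xi({}_{v_{p'}}\!\Path(T_d))$, lifts $p'p''$ starting at $v_p$, splits the lift, identifies its first half with $q''$ using Lemma~\ref{lemwell}, and concludes $\xi({}_{v_{p'}}\!\Path(T_d))\subseteq\xi({}_{v_{p\ast p'}}\!\Path(T_d))$, upgrading to equality with Lemma~\ref{lempath}. You instead observe that this is exactly an instance of condition~(2) of Definition~\ref{defadm} applied to the relation $\sim$, which the paper asserts (in the lattice remark after Definition~\ref{defadm}) to be the maximal \emph{admissible} equivalence relation. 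Your route is shorter and conceptually cleaner --- the paper's argument is, in effect, a re-proof of that admissibility condition in this special case --- but it does lean on the admissibility of $\sim$, which the paper states without proof; since the paper already needs that fact elsewhere (the quotient $(S_C,\zeta_C)=((T_C)_\sim,(\xi_C)_\sim)$ is only defined for admissible relations), this is a legitimate dependency rather than a gap, though a fully self-contained write-up would either verify it once (via Lemmas~\ref{lemwell} and~\ref{lempath}, as the paper's in-line argument effectively does) or cite it explicitly.
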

\begin{proof}
By the previous lemma $p\ast p'$ is defined and $p \ast p'\in \phi({}_u\!\Path(F_d))_{\nb}$. It remains to show that $v_{p\ast p'}\sim v_{\phi(u)}$. By the proof of Lemma \ref{lemT0} there are paths $q\in {}_{v_{\phi(u)}}\!\Path_{v_{p}}(T_d)$ and $q'\in {}_{v_{\phi(u)}}\!\Path_{v_{p'}}(T_d)$ such that $\xi(q)=p$ and $\xi(q')=p'$. Since $v_{p}\sim v_{\phi(u)}$, there is a path $q''\in {}_{v_{p}}\!\Path(T_d)$ such that $\xi(q'')=p'=\xi(q')$. By Lemmas \ref{lemT0} and \ref{lemaction} we have 
\[v_{p\ast p'}=v_{\phi(u)}\cdot (p\ast p')=v_{\phi(u)}\cdot (pp')=r(qq'')=r(q'')
\]
and hence $q''\in {}_{v_{p}}\!\Path_{v_{p\ast p'}}(T_d)$.

We will show that $v_{p\ast p'}\sim v_{p'}$ which implies $v_{p\ast p'}\sim v_{\phi(u)}$. Let $p''\in \xi({}_{v_{p'}}\!\Path(T_d))$. Then there is an $r\in {}_{v_{p'}}\!\Path(T_d)$ such that $\xi(r)=p''$. Since $q'r\in {}_{v_{\phi(u)}}\!\Path(T_d)$, we have $p'p''=\xi(q'r)\in\xi({}_{v_{\phi(u)}}\!\Path(T_d))$. Since $v_{\phi(u)}\sim v_{p}$, there is an $s\in {}_{v_{p}}\!\Path(T_d)$ such that $\xi(s)=p'p''$. Write $s=s_1s_2$ such that $\xi(s_1)=p'$ and $\xi(s_2)=p''$. Clearly $s(s_1)=v_{p}$. It follows from Lemma \ref{lemwell} that $s_1=q''$. Since $q''\in {}_{v_{p}}\!\Path_{v_{p\ast p'}}(T_d)$, we obtain $s_2\in {}_{v_{p\ast p'}}\!\Path(T_d)$. Hence $p''=\xi(s_2)\in \xi({}_{v_{p\ast p'}}\!\Path(T_d))$. We have shown that $\xi({}_{v_{p'}}\!\Path(T_d))\subseteq  \xi({}_{v_{p\ast p'}}\!\Path(T_d))$. It follows from Lemma \ref{lempath} that $\xi({}_{v_{p'}}\!\Path(T_d))= \xi({}_{v_{p\ast p'}}\!\Path(T_d))$ and thus $v_{p\ast p'}\sim v_{p'}$.
\end{proof}

\begin{lemma}\label{lemT3}
Let $p\in G$. Then $p^*\in G$.
\end{lemma}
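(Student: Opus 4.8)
The goal is to verify the two defining conditions for membership in the group $G$ from \eqref{gfreegreop}: that $p^{*}\in\phi({}_u\!\Path(F_d))_{\nb}$ and that $v_{p^{*}}\sim v_{\phi(u)}$. The first is comparatively routine, while the second is where the substance lies; together they express exactly that $G$ is closed under inverses (with $\phi(u)$ as identity and $p\ast p^{*}=\phi(u)$). Throughout I would work inside the repelling object $(T,\xi)=(T_C,\xi_C)$ and use the morphism $\alpha\colon(T_C,\xi_C)\to(F,\phi)$ furnished by Proposition \ref{propuniv}; since $\alpha^{0}(v_{\phi(u)})=u$ by construction, Proposition \ref{propcruc} identifies $\xi({}_{v_{\phi(u)}}\!\Path(T_d))=\phi({}_u\!\Path(F_d))$, an identification I will lean on repeatedly.

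For the first condition, I would take the path $q\in{}_{v_{\phi(u)}}\!\Path_{v_p}(T_d)$ with $\xi(q)=p$ produced in the proof of Lemma \ref{lemT0}. Reversing it gives $q^{*}\in{}_{v_p}\!\Path_{v_{\phi(u)}}(T_d)$ with $\xi(q^{*})=p^{*}$, so $p^{*}\in\xi({}_{v_p}\!\Path(T_d))$. Since $p\in G$ we have $v_p\sim v_{\phi(u)}$, i.e. $\xi({}_{v_p}\!\Path(T_d))=\xi({}_{v_{\phi(u)}}\!\Path(T_d))$, whence $p^{*}\in\xi({}_{v_{\phi(u)}}\!\Path(T_d))=\phi({}_u\!\Path(F_d))$. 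As $p$ is reduced so is $p^{*}$, giving $p^{*}\in\phi({}_u\!\Path(F_d))_{\nb}$; in particular $v_{p^{*}}$ is a genuine vertex of $T$, and by Lemmas \ref{lemaction} and \ref{lemT0} the lift $q'\in{}_{v_{\phi(u)}}\!\Path(T_d)$ of $p^{*}$ satisfies $r(q')=v_{\phi(u)}\cdot p^{*}=v_{p^{*}}$, so $q'\in{}_{v_{\phi(u)}}\!\Path_{v_{p^{*}}}(T_d)$.

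For the second condition I would establish the single inclusion $\xi({}_{v_{p^{*}}}\!\Path(T_d))\subseteq\xi({}_{v_{\phi(u)}}\!\Path(T_d))$ and then invoke Lemma \ref{lempath} to upgrade it to equality, which is precisely $v_{p^{*}}\sim v_{\phi(u)}$. Given $w=\xi(r)$ with $r\in{}_{v_{p^{*}}}\!\Path(T_d)$, I would concatenate with the lift $q'$ above to form $q'r\in{}_{v_{\phi(u)}}\!\Path(T_d)$, which realizes $p^{*}w$; hence $p^{*}w\in\xi({}_{v_{\phi(u)}}\!\Path(T_d))$, and since $v_p\sim v_{\phi(u)}$ this word is also realized by some $s\in{}_{v_p}\!\Path(T_d)$. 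I would then form $qs\in{}_{v_{\phi(u)}}\!\Path(T_d)$, realizing $pp^{*}w$, and argue that its initial block reduces away entirely: writing $p=x_1\cdots x_n$ and $q=g_1\cdots g_n$, co-star injectivity of $\xi$ (Lemma \ref{lemwell}) forces the first edge of $s$ to equal $g_n^{*}$, so the backtracking pair $g_ng_n^{*}$ cancels, and peeling off letters inductively cancels all of $q$ against the $p^{*}$-part of $s$. What remains is a path in ${}_{v_{\phi(u)}}\!\Path(T_d)$ realizing $w$, giving $w\in\xi({}_{v_{\phi(u)}}\!\Path(T_d))$ as required.

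The main obstacle is this final telescoping cancellation. One must check, at each stage, that the relevant edge of $q$ and the corresponding edge of $s$ really are mutually inverse in $T_d$ — which is exactly what the co-star injectivity guaranteed by Lemma \ref{lemwell} provides — and that after deleting a backtracking pair the two neighbouring edges remain composable, so that the induction on the length of $p$ terminates with the residual $w$-block based at $v_{\phi(u)}$. Everything else reduces to bookkeeping with Lemmas \ref{lemT0}, \ref{lemaction}, and \ref{lempath}.
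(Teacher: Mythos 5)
Your argument is correct and follows essentially the same route as the paper's proof: both halves (first that $p^*\in\phi({}_u\!\Path(F_d))_{\nb}$, then that $v_{p^*}\sim v_{\phi(u)}$ via the one-sided inclusion $\xi({}_{v_{p^*}}\!\Path(T_d))\subseteq\xi({}_{v_{\phi(u)}}\!\Path(T_d))$ upgraded to equality by Lemma~\ref{lempath}) use the same lifts and the same splitting $s=s_1s_2$. The only cosmetic difference is at the end, where the paper identifies $s_1=q^*$ in a single application of Lemma~\ref{lemwell} (both start at $v_p$ and have image $p^*$ under $\xi$), whereas your edge-by-edge telescoping of $qs$ just re-runs the induction inside that lemma; note also that the injectivity you invoke there is the star (common source) case of Lemma~\ref{lemwell}, not the co-star case.
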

\begin{proof}
By the proof of Lemma \ref{lemT0} there is a path $q\in {}_{v_{\phi(u)}}\!\Path_{v_{p}}(T_d)$ such that $\xi(q)=p$. Clearly $q^*\in {}_{v_{p}}\!\Path_{v_{\phi(u)}}(T_d)$ and $\xi(q^*)=p^*$. Since $v_{p}\sim v_{\phi(u)}$, there is a path $q'\in {}_{v_{\phi(u)}}\!\Path(T_d)$ such that $\xi(q')=p^*$. Let $\alpha:(T,\xi)\to (F,\phi)$ be the morphism defined in the proof of Proposition \ref{propuniv}. Then $\alpha(q')\in {}_{u}\!\Path(F_d)$ and $\phi(\alpha(q'))=\xi(q')=p^*$. Hence $p^*\in \phi({}_{u}\!\Path(F_d))$. Moreover, $p^*$ is not backtracking since $p$ is not backtracking.

It remains to show that $v_{p^*}\sim v_{\phi(u)}$. Let $p'\in \xi({}_{v_{p^*}}\!\Path(T_d))$. Then there is an $r\in{}_{v_{p^*}}\! \Path(T_d)$ such that $\xi(r)=p'$. By the proof of Lemma \ref{lemT0} there is a path $q''\in {}_{v_{\phi(u)}}\!\Path_{v_{p^*}}(T_d)$ such that $\xi(q'')=p^*$. Clearly $q''r\in {}_{v_{\phi(u)}}\!\Path(T_d)$ and hence $p^*p'=\xi(q''r)\in\xi({}_{v_{\phi(u)}}\!\Path(T_d))$. Since $v_{\phi(u)}\sim v_{p}$, there is an $s\in {}_{v_{p}}\!\Path(T_d)$ such that $\xi(s)=p^*p'$. Write $s=s_1s_2$ such that $\xi(s_1)=p^*$ and $\xi(s_2)=p'$. Clearly $s_1\in {}_{v_{p}}\!\Path(T_d)$. It follows from Lemma \ref{lemwell} that $s_1=q^*$. Since $q^*\in {}_{v_{p}}\!\Path_{v_{\phi(u)}}(T_d)$, we obtain $s_2\in {}_{v_{\phi(u)}}\!\Path(T_d)$. Hence $p'=\xi(s_2)\in \xi({}_{v_{\phi(u)}}\!\Path(T_d))$. We have shown that $\xi({}_{v_{p^*}}\!\Path(T_d))\subseteq  \xi({}_{v_{\phi(u)}}\!\Path(T_d))$. It follows from Lemma \ref{lempath} that $\xi({}_{v_{p^*}}\!\Path(T_d))= \xi({}_{v_{\phi(u)}}\!\Path(T_d))$ and thus $v_{p^*}\sim v_{\phi(u)}$.
\end{proof}

\begin{proposition}\label{propgroup}
The group $(G,\ast)$ defined in (\ref{gfreegreop}) is a free group.
\end{proposition}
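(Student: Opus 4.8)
The plan is to realise $(G,\ast)$ as a subgroup of the fundamental group $\pi(\hat E,\phi^0(u))$ of the graph $\hat E$, and then to invoke the Nielsen--Schreier theorem. Since the hard combinatorial work — that $\ast$ is everywhere defined and that $G$ is closed under $\ast$ and under $p\mapsto p^*$ — has already been carried out in Lemmas~\ref{lemT1}--\ref{lemT3}, the residual task is really just to recognise the ambient free-group structure.

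First I would check that every $p\in G$ is a \emph{reduced closed path based at $\phi^0(u)$}. Being an element of $\phi({}_u\!\Path(F_d))_{\nb}$, the path $p$ is reduced and has $s(p)=\phi^0(u)$. The condition $v_p\sim v_{\phi(u)}$ in particular forces the underlying vertices to coincide, $\xi^0(v_p)=\xi^0(v_{\phi(u)})=\phi^0(u)$; and from the definition of $(T_C,\xi_C)$ one reads off that $\xi^0(v_p)=r(p)$, regardless of whether the last letter of $p$ is a real or a ghost edge. Hence $r(p)=\phi^0(u)$, so $p$ is a reduced loop at $\phi^0(u)$. Such reduced loops, under the operation ``concatenate and freely reduce'' — which is exactly the operation $\ast$ defined before the proposition — are precisely the elements of $\pi(\hat E,\phi^0(u))$ equipped with its group multiplication.

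Next I would record that $G$ is a subgroup of $\pi(\hat E,\phi^0(u))$. It contains the identity, namely the trivial path $\phi(u)$, since $v_{\phi(u)}\sim v_{\phi(u)}$ holds trivially; it is closed under $\ast$ by Lemma~\ref{lemT2}; and it is closed under the inversion $p\mapsto p^*$ by Lemma~\ref{lemT3}, with $p\ast p^*=\phi(u)$ by the very definition of $\ast$, so that $p^*$ is the inverse of $p$. Associativity is the usual associativity of free reduction. Thus $G$ is genuinely a subgroup of $\pi(\hat E,\phi^0(u))$.

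Finally, since $\hat E$ is a graph, its fundamental group $\pi(\hat E,\phi^0(u))$ is free; and by the Nielsen--Schreier theorem every subgroup of a free group is again free. Therefore $(G,\ast)$ is free. The main point to be careful about is the identification in the second paragraph: that the combinatorially defined product $\ast$ matches the multiplication of $\pi(\hat E,\phi^0(u))$ and that the elements of $G$ are genuine based loops. Both follow from the construction of $T_C$ and the definition of the relation $\sim$ as sketched, after which the freeness is immediate.
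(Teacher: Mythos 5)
Your proof is correct and follows essentially the same strategy as the paper: establish the group structure via Lemmas~\ref{lemT2} and \ref{lemT3}, realise $(G,\ast)$ inside a free group, and conclude by Nielsen--Schreier. The only (cosmetic) difference is the choice of ambient free group — the paper maps $G$ injectively into the free group on the alphabet $\{e_i\mid e\in E^1,\,1\leq i\leq w(e)\}$ by rewriting $e_i^*$ as $e_i^{-1}$, whereas you identify $G$ as a subgroup of the fundamental group $\pi(\hat E,\phi^0(u))$ after checking that every element of $G$ is a reduced loop at $\phi^0(u)$ (a fact the paper also records, inside the proof of Lemma~\ref{lemT1}).
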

\begin{proof}
By Lemma \ref{lemT2}, $\ast$ defines a binary operation on $G$. Clearly this operation is associative. Moreover, $\phi(u)\ast p=p\ast \phi(u)$ for any $p\in G$. If $p\in G$, then $p^*\in G$ by Lemma \ref{lemT3}. Clearly $pp^*=p^*p=\phi(u)$. Hence $(G,\ast)$ is a group.

Let $F$ be the free group on $X:=\{e_i \mid e\in E^1,1\leq i\leq w(e)\}$. We identify $F$ with the set of all reduced words over the alphabet $X\cup X^{-1}$ (a word over $X\cup X^{-1}$ is reduced if it does contain any subwords of the form $xx^{-1}$ or $x^{-1}x$ where $x\in X$). The product of two reduced words $w$ and $w'$ is the reduced word one gets by removing all subwords of the form $xx^{-1}$ or $x^{-1}x$ from the juxtaposition $ww'$. 

Define a map $\theta:G\to F$ as follows. If $p=x_1\dots x_n\in G\setminus \{\phi(u)\}$, let $\theta(p)$ be the word one gets by replacing any letter $e_i^*$ by $e_i^{-1}$. Clearly $\theta(p)$ is a reduced word since $p$ is not backtracking. Moreover, we define $\theta(\phi(u))$ as the identity element of $F$, namely the empty word. Clearly $\theta$ is a group homomorphism. Moreover, $\theta$ is injective and hence $G$ is isomorphic to a subgroup of a free group. It follows from the Nielsen-Schreier theorem that $G$ is a free group.
\end{proof}

Let $W\subseteq V_T$ be the linear span of the $\sim$-equivalence class of $v_{\phi(u)}$, i.e. $W=\langle v_p\mid p\in G\rangle$. Let $A$ be the subalgebra of $L_K(E)$ generated by the image of the group $G$ in $L_K(E)$.

\begin{lemma}\label{lemAW}
The $K$-vector space $W$ is a right $A$-module where the action of $A$ on $W$ is induced by the action of $L_K(E)$ on $W$.
\end{lemma}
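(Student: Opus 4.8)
The plan is to prove closure: once I show that $W\cdot A\subseteq W$, the module axioms for $W$ as a right $A$-module are inherited for free from the right $L_K(E)$-module structure on $V_T$. Since $A$ is generated as a $K$-algebra by the image of $G$ in $L_K(E)$, every element of $A$ is a $K$-linear combination of finite products $p_1\cdots p_k$ with $p_j\in G$. By linearity of the action and associativity, it therefore suffices to treat the base case $v_q\cdot p\in W$ for arbitrary $q,p\in G$; the general statement then follows by a routine induction on the length of such products (each factor keeps us inside $W$) together with the fact that $W=\langle v_p\mid p\in G\rangle$ is spanned by the basis vectors it contains.

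For the base case I would use the lemmas already proved in this subsection. By Lemma~\ref{lemT0}, for $q\in G\subseteq\phi({}_u\!\Path(F_d))_{\nb}$ we have $v_q=v_{\phi(u)}\cdot q$, so that $v_q\cdot p=v_{\phi(u)}\cdot(qp)$, where $qp$ denotes the honest (possibly non-reduced) product in $L_K(E)$. The crucial identity is $v_{\phi(u)}\cdot(qp)=v_{q\ast p}$, which is precisely the computation performed inside the proof of Lemma~\ref{lemT2} (there written $v_{\phi(u)}\cdot(pp')=v_{p\ast p'}$). Since $q\ast p\in G$ by Lemma~\ref{lemT2}, I conclude $v_q\cdot p=v_{q\ast p}\in W$, which is exactly what the base case requires.

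The only mild subtlety is the identity $v_{\phi(u)}\cdot(qp)=v_{\phi(u)}\cdot(q\ast p)$, namely that the unreduced concatenation acts on the relevant vertices exactly as its reduced form; this is where the representation-graph axioms enter. Concretely, each backtracking pair $e_ie_i^*$ or $e_i^*e_i$ acts as the identity on any vertex it meets: if $v_t\cdot e_i=v_{t'}\neq 0$ is realised by an edge $f$ of $T$ with $s(f)=v_t$, $r(f)=v_{t'}$ and $\xi^1(f)=e_i$, then $f$ is, by the uniqueness in Definition~\ref{defwp}(2), the unique edge ending at $v_{t'}$ with structure edge $e$, whence $v_{t'}\cdot e_i^*=s(f)=v_t$; the symmetric statement for $e_i^*e_i$ uses Definition~\ref{defwp}(1). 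Combined with Lemma~\ref{lemaction}, removing backtracking does not alter the outcome of the action, so the result is $v_{q\ast p}$. Because this mechanism is already packaged in the proof of Lemma~\ref{lemT2}, I expect the argument to be short, and I anticipate no genuine obstacle beyond making the reduction from a general element of $A$ to single generators explicit.
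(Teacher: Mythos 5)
Your proposal is correct and follows essentially the same route as the paper: reduce to showing that a single generator $p\in G$ sends a basis vector $v_{p'}$ ($p'\in G$) back into $W$, then compute $v_{p'}\cdot p=(v_{\phi(u)}\cdot p')\cdot p=v_{\phi(u)}\cdot (p'p)=v_{p'\ast p}$ via Lemmas~\ref{lemT0} and \ref{lemT1}, and conclude with Lemma~\ref{lemT2} that $p'\ast p\in G$. Your extra discussion of why backtracking pairs act as the identity (via Definition~\ref{defwp}(1),(2)) is a correct unpacking of what the paper delegates to Lemma~\ref{lemT1}.
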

\begin{proof}
It suffices to show that if $p\in G$ and $v_{p'}\sim v_{\phi(u)}$, then $v_{p'}\cdot p =v_{p''}$ for some $v_{p''}\sim v_{\phi(u)}$. Since $v_{p'}\sim v_{\phi(u)}$, we have $p'\in G$. It follows from Lemmas \ref{lemT0} and \ref{lemT1} that
\[v_{p'}\cdot p =( v_{\phi(u)}\cdot p') \cdot p=v_{\phi(u)} \cdot p'p=v_{\phi(u)} \cdot p'\ast p=v_{p\ast p'}.\]
By Lemma \ref{lemT2} we have $p\ast p'\in G$. Thus $v_{p\ast p'}\sim v_{\phi(u)}$.
\end{proof}

Since $W$ is a right $A$-module, there is a $K$-algebra homomorphism $\delta:A\to \End_K(W)^{\op}$ defined by $\delta(a)(w)=w \cdot a$. Define $\bar A:=A/\ker(\delta)$. Let $~\bar{}:A\to \bar A$ be the canonical $K$-algebra homomorphism and $\bar \delta:\bar A\to \End_K(W)^{\op}$ the $K$-algebra homomorphism induced by $\delta$. Then the following diagram commutes.
\[\xymatrix{ A\ar[r]^(.35){\delta}\ar[d]_{\bar{}~}&\End_K(W)^{\op}\\ \bar A\ar[ur]_{\bar\delta}&}.\] 
$W$ becomes a right $\bar A$-module by defining $w\cdot \bar a=\bar\delta(\bar a)(w)=\delta(a)(w)=w\cdot a$.

\begin{proposition}\label{propKG} The algebra  $\bar A$ is isomorphic to the group algebra $K[G]$.
\end{proposition}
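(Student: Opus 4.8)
The plan is to produce an explicit $K$-algebra isomorphism $\Psi\colon K[G]\to\bar A$ that sends each group element to the class of the monomial it represents, and then to read off bijectivity from the fact that $W$, as a right $\bar A$-module, is nothing but the right regular representation of $G$. First I would define $\iota\colon G\to L_K(E)$ by sending a reduced path $p\in G\subseteq\phi({}_u\!\Path(F_d))_{\nb}$ to the corresponding monomial in $L_K(E)$, and set $\Psi(p):=\overline{\iota(p)}\in\bar A$, extended $K$-linearly. Since $A$ is generated as a $K$-algebra by $\iota(G)$ and the canonical surjection $A\to\bar A$ is onto, surjectivity of $\Psi$ will be automatic once $\Psi$ is shown to be multiplicative: any product $\iota(p_1)\cdots\iota(p_n)$ collapses in $\bar A$ to $\overline{\iota(p_1\ast\cdots\ast p_n)}=\Psi(p_1\ast\cdots\ast p_n)$.

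The key step, and the only delicate one, is to verify that $\Psi$ is an algebra homomorphism, i.e. that $\overline{\iota(p)}\,\overline{\iota(q)}=\overline{\iota(p\ast q)}$ for all $p,q\in G$. The subtlety is that the identity $\iota(p)\iota(q)=\iota(p\ast q)$ need not hold in $L_K(E)$ itself, since deleting a subword $e_ie_i^{*}$ is not a legitimate simplification in a weighted Leavitt path algebra; it becomes valid only after passing to the faithful quotient $\bar A=A/\ker\delta$. To establish it there I would use that $\bar\delta$ is injective and compare both sides as operators on $W$. Evaluating on an arbitrary basis vector $v_{p'}$ with $p'\in G$ and using Lemmas \ref{lemT0}, \ref{lemT1} and \ref{lemAW}, the element $\iota(p)$ acts by right $\ast$-multiplication by $p$, so $\iota(p)\iota(q)$ sends $v_{p'}$ to $v_{p'\ast p\ast q}$; by Lemma \ref{lemT2} the path $p\ast q$ again lies in $G$, and $\iota(p\ast q)$ sends $v_{p'}$ to $v_{p'\ast(p\ast q)}$. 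Associativity of $\ast$ (Proposition \ref{propgroup}) makes the two outputs equal, so $\iota(p)\iota(q)-\iota(p\ast q)\in\ker\delta$ and the two classes agree in $\bar A$.

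Finally I would prove injectivity of $\Psi$. Suppose $\sum_{i}k_i\overline{\iota(p_i)}=0$ in $\bar A$ with the $p_i\in G$ pairwise distinct and $k_i\in K$; this means $\sum_i k_i\iota(p_i)\in\ker\delta$, hence annihilates all of $W$. Applying it to the basis vector $v_{\phi(u)}$, which corresponds to the identity of $G$, and invoking Lemma \ref{lemT0} (so that $v_{\phi(u)}\cdot\iota(p_i)=v_{p_i}$) yields $\sum_i k_i v_{p_i}=0$ in $W\subseteq V_T$. As distinct reduced paths index distinct vertices of $T$, the vectors $v_{p_i}$ are linearly independent, forcing every $k_i=0$. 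Thus $\Psi$ is a bijective $K$-algebra homomorphism and $\bar A\cong K[G]$. I expect the multiplicativity verification to be the crux, precisely because it is the step where one must exploit the passage from $A$ to its faithful quotient $\bar A$ rather than working inside $L_K(E)$.
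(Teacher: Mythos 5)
Your proposal is correct and follows essentially the same route as the paper: the paper builds the inverse map $\bar A\to K[G]$ by showing every class has a unique representative $\sum_{p\in G}k_p p$, using exactly your two key observations — that $pq-p\ast q\in\ker(\delta)$ and that evaluation at $v_{\phi(u)}$ together with linear independence of the $v_p$ (Lemma \ref{lemT0}) forces uniqueness of coefficients. Packaging this as an explicit homomorphism $K[G]\to\bar A$ checked for multiplicativity, surjectivity and injectivity is just the mirror image of the paper's argument.
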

\begin{proof}
Let $a\in A$. Then $a=\sum_{i=1}^n k_ip_{i,1}\dots p_{i,m_i}$ for some $n, m_1,\dots, m_n\in \N$, $k_1,\dots, k_n\in K$ and $p_{i,j}\in G~(1\leq i\leq n, 1\leq j \leq m_i)$. Clearly $p_{i,1}\dots p_{i,m_i}-p_{i,1}\ast \dots \ast p_{i,m_i}\in \ker(\delta)$ for any $1\leq i\leq n$. Hence $a\equiv \sum_{i=1}^n k_ip_{i,1}\ast \dots \ast p_{i,m_i}\bmod \ker(\delta)$. We have shown that any element of $\bar A$ has a representative of the form $\sum_{p\in G}k_pp$.

Suppose that $\sum_{p\in G}k_pp\equiv \sum_{p\in G}l_pp \bmod \ker(\delta)$. Then $\delta(\sum_{p\in G}(k_p-l_p)p)=0$ and hence 
\[0=\delta(\sum_{p\in G}(k_p-l_p)p)(v_{\phi(u)})=v_{\phi(u)}\cdot (\sum_{p\in G}(k_p-l_p)p)=\sum_{p\in G}(k_p-l_p)v_{p}\]
by Lemma \ref{lemT0}. Since the $v_p$ are linearly independent, we obtain $k_p=l_p$ for any $p\in G$. Hence any element of $\bar A$ has precisely one representative of the form $\sum_{p\in G}k_pp$.

Define a map $\eta:\bar A\to K[G]$ by $\eta(\overline{\sum_{p\in G}k_pp})=\sum_{p\in G}k_pp$. By the previous two paragraphs $\eta$ is well-defined. Moreover, $\eta$ is clearly bijective. We leave it to the reader to check that $\eta$ is a $K$-algebra homomorphism.
\end{proof}

\begin{proposition}\label{propfree}
The $A$-module $W$ is free of rank $1$ as a $\bar A$-module.
\end{proposition}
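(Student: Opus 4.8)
The plan is to exhibit $v_{\phi(u)}$ as a free generator, i.e.\ to show that the right $\bar A$-module map
\[
\Phi\colon \bar A\longrightarrow W,\qquad \bar a\longmapsto v_{\phi(u)}\cdot \bar a,
\]
is an isomorphism. That $\Phi$ is a homomorphism of right $\bar A$-modules is immediate, since $v_{\phi(u)}\cdot(\bar a\bar b)=(v_{\phi(u)}\cdot \bar a)\cdot \bar b$ by associativity of the module action. Establishing that $\Phi$ is bijective will then prove that $W\cong \bar A_{\bar A}$, which is precisely the statement that $W$ is free of rank $1$.

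First I would unwind $\Phi$ using the normal form from Proposition~\ref{propKG}. Every element of $\bar A$ has a unique representative $\overline{\sum_{p\in G}k_p p}$ with $k_p\in K$, and applying Lemma~\ref{lemT0}, which gives $v_{\phi(u)}\cdot p=v_p$ for each $p\in G$, yields
\[
\Phi\Big(\,\overline{\textstyle\sum_{p\in G}k_p p}\,\Big)=\sum_{p\in G}k_p\,(v_{\phi(u)}\cdot p)=\sum_{p\in G}k_p\,v_p .
\]
Surjectivity is now clear: by definition $W=\langle v_p\mid p\in G\rangle$, so the elements $\sum_{p\in G}k_p v_p$ exhaust $W$, and each is $\Phi$ of some $\bar a$. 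For injectivity, suppose $\Phi(\bar a)=\sum_{p\in G}k_p v_p=0$. The vectors $\{v_p\mid p\in G\}$ are a subset of the basis $\{v_q\}$ of $V_T$, hence linearly independent, forcing every $k_p=0$; by the uniqueness of the normal form this gives $\bar a=0$. Thus $\Phi$ is a bijective $\bar A$-module homomorphism and $W$ is free of rank $1$ over $\bar A$, generated by $v_{\phi(u)}$.

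I do not expect a serious obstacle here, as the substantive work has already been carried out: Proposition~\ref{propKG} supplies the unique normal form on $\bar A$, and Lemma~\ref{lemT0} translates right multiplication by a group element into the bijection $p\mapsto v_p$ between $G$ and the basis of $W$. The only point requiring a moment's care is to keep the right-module bookkeeping consistent—working throughout with $W$ and $\bar A$ as right $\bar A$-modules and noting that $\Phi$ intertwines the two right actions—so that ``free of rank $1$'' is read as $W\cong \bar A_{\bar A}$ rather than as an unadorned vector-space statement.
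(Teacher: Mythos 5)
Your proposal is correct and follows essentially the same route as the paper: both invoke the unique normal form $\overline{\sum_{p\in G}k_pp}$ from Proposition~\ref{propKG} and Lemma~\ref{lemT0} to compute $v_{\phi(u)}\cdot\bar a=\sum_{p\in G}k_pv_p$, and conclude that $v_{\phi(u)}$ is a free generator. Your write-up merely makes the surjectivity and injectivity of the evaluation map explicit, which the paper leaves implicit.
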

\begin{proof}
Let $\bar a\in \bar A$. Then $\bar a=\overline{\sum_{p\in G}k_pp}$ for some $k_p\in K$ (that was shown in the proof of Proposition \ref{propKG}). Hence 
\[v_{\phi(u)}\cdot \bar a= v_{\phi(u)}\cdot\overline{\sum_{p\in G}k_pp} =v_{\phi(u)}\cdot\sum_{p\in G}k_pp =\sum_{p\in G}k_pv_p\]
by Lemma \ref{lemT0}. It follows that $\{v_{\phi(u)}\}$ is a basis for the $\bar A$-module $W$.
\end{proof}

We are in position to prove the main result of this section. 

\begin{theorem}\label{thmdecomp}
The $L_K(E)$-module $V_{T}$ defined by $(T,\xi)=(T_C,\xi_C)$ is indecomposable.
\end{theorem}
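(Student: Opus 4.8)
The plan is to verify indecomposability through the endomorphism-ring criterion recalled just before the statement: it suffices to show that $0$ and $\id$ are the only idempotents of $\End_L(V_T)$, where $L:=L_K(E)$. The whole strategy is to funnel every endomorphism into the group algebra $K[G]$, which has been built for exactly this purpose in Propositions \ref{propgroup}--\ref{propfree}. The starting observation is that $V_T$ is cyclic, generated by $v_{\phi(u)}$: by Lemma \ref{lemT0} one has $v_p=v_{\phi(u)}\cdot p$ for every $p\in\phi({}_u\!\Path(F_d))_{\nb}$, and these $v_p$ form a basis of $V_T$. Hence any $\epsilon\in\End_L(V_T)$ is determined by the single value $\epsilon(v_{\phi(u)})$. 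Applying Lemma \ref{lemmodule} with $\sigma=\epsilon$ and $(F,\phi)=(G,\psi)=(T,\xi)$, if $\epsilon(v_{\phi(u)})=\sum_s k_s w_s$ then each $w_s$ satisfies $\xi({}_{w_s}\!\Path(T_d))=\xi({}_{v_{\phi(u)}}\!\Path(T_d))$, i.e. $w_s\sim v_{\phi(u)}$; thus $\epsilon(v_{\phi(u)})\in W$.

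Next I would promote this to $\epsilon(W)\subseteq W$. For $p\in G$, Lemma \ref{lemT0} gives $\epsilon(v_p)=\epsilon(v_{\phi(u)}\cdot p)=\epsilon(v_{\phi(u)})\cdot p$, which lies in $W$ because $\epsilon(v_{\phi(u)})\in W$, $p\in G\subseteq A$, and $W$ is a right $A$-module by Lemma \ref{lemAW}. Since $\epsilon$ is $L$-linear and $A\subseteq L$, the restriction $\epsilon|_W$ is $\bar A$-linear, so restriction defines a map
\[
\rho:\End_L(V_T)\longrightarrow \End_{\bar A}(W),\qquad \epsilon\longmapsto \epsilon|_W .
\]
By Proposition \ref{propfree}, $W$ is free of rank one over $\bar A$ with basis $\{v_{\phi(u)}\}$, so $\End_{\bar A}(W)\cong\bar A$, and by Proposition \ref{propKG} we have $\bar A\cong K[G]$. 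Thus $\rho$ is a unital ring (anti)homomorphism into $K[G]$ — the direction is immaterial since in either case idempotents are carried to idempotents. Finally $\rho$ is injective: if $\epsilon|_W=0$ then $\epsilon(v_{\phi(u)})=0$, and as $v_{\phi(u)}$ generates $V_T$ this forces $\epsilon=0$. Therefore $\End_L(V_T)$ embeds as a unital subring of $K[G]$.

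It then remains to invoke that $K[G]$ has no idempotents other than $0$ and $1$. By Proposition \ref{propgroup} the group $G$ is free, hence bi-orderable, and the group algebra of an orderable group over a field is an integral domain; a domain has no nontrivial idempotents. Because idempotents of a unital subring are idempotents of the ambient ring, the embedded copy of $\End_L(V_T)$ contains only the idempotents $0$ and $1$, which pull back under $\rho$ to $0$ and $\id$. Hence $\End_L(V_T)$ has no nontrivial idempotents and $V_T$ is indecomposable.

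The step demanding the most care is showing that every endomorphism stabilises $W$ and restricts to a $\bar A$-linear map: this is precisely where Lemma \ref{lemmodule} does the heavy lifting, by forcing $\epsilon(v_{\phi(u)})$ to be supported on the $\sim$-class of $v_{\phi(u)}$. The only genuinely external ingredient is the idempotent-freeness of $K[G]$ for $G$ free, and I would justify it via orderability (so that $K[G]$ is a domain) rather than appealing to the Kaplansky idempotent conjecture, since for free groups the orderability argument is elementary and self-contained.
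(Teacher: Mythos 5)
Your proposal is correct and follows essentially the same route as the paper's proof: use Lemma \ref{lemmodule} to see that an idempotent $\epsilon$ stabilises $W$, identify $\End_{\bar A}(W)$ with $K[G]$ via Propositions \ref{propKG} and \ref{propfree}, conclude from the freeness of $G$ (Proposition \ref{propgroup}) that $K[G]$ is a domain and hence has only trivial idempotents, and propagate back to all of $V_T$ through Lemma \ref{lemT0}. The only cosmetic difference is that you justify the domain property of $K[G]$ by bi-orderability of free groups, whereas the paper cites Higman's theorem on indicable groups; both are standard and interchangeable here.
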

\begin{proof}
Let $\epsilon\in \End_L(V_T)$ be an idempotent endomorphism. It follows from Lemma \ref{lemmodule} that $\epsilon(W)=W$. Hence $\epsilon|_W\in \End_K(W)$. Clearly $\epsilon(\bar w\cdot a)=\epsilon(w\cdot a)=\epsilon(w)\cdot a=\epsilon(w)\cdot \bar a$ for any $a\in A$ and $w\in W$. Hence $\epsilon|_W\in \End_{\bar A}(W)$. By Proposition \ref{propKG} and \ref{propfree} we have $\End_{\bar A}(W)\cong \bar A\cong K[G]$. Since $G$ is free by Proposition \ref{propgroup}, the group ring $K[G]$ has no zero divisors by \cite[Theorem 12]{higman} (note that as explained in the paragraph before \cite[Theorem 12]{higman}, any free group is indicable throughout). It follows that $0$ and $1$ are the only idempotents of $K[G]$. Hence $\epsilon|_W=0$ or $\epsilon|_W=\id_W$.

Let $v_p\in T^0$. Then $\epsilon(v_p)=\epsilon( v_{\phi(u)}\cdot p)=\epsilon(v_{\phi(u)})\cdot p$ by Lemma \ref{lemT0}. Hence $\epsilon=0$ if $\epsilon|_W=0$ and $\epsilon=\id$ if $\epsilon|_W=\id_W$. Thus we have shown that $V_T$ is indecomposable. 
\end{proof}

\section{Irreducible representations of Leavitt path algebras}

Let $K$ be a field, $E$ a directed graph and $L_K(E)$ its associated Leavitt path algebra. For an infinite path $p$ in $E$, Xiao-Wu Chen constructed the left $L_K(E)$-module
$V_{[p]}$ of the space of infinite paths tail-equivalent to $p$ and proved that it is an irreducible representation
of the Leavitt path algebra (see~\cite[Theorem 3.3]{C}). A similar construction was also given for paths ending on a sink vertex.  In this section we recover these irreducible modules via the representation graphs of $E$. This gives another way to express these modules. Besides being more visual, this approach allows for carrying calculus on these modules with ease and produce indecomposable $L_K(E)$-modules via universal representations.

\subsection{Chen simple modules via representation graphs}

We briefly recall the construction of simple modules via infinite paths following the paper of Chen~\cite{C}. 

The set of all right-infinite paths in $E$ is denoted by $E^\infty$. If $p=p_1p_2\dots\in E^\infty$ and $n\geq 0$, then we set $\tau_{\leq n}(p):=p_{1}\dots p_{n}\in E^n$ and $\tau_{>n}(p):=p_{n+1}p_{n+2}\dots\in E^\infty$ (if $n=0$, then $\tau_{\leq n}(p)=s(p)$ and $\tau_{>n}(p)=p$). Two right-infinite paths $p,q\in E^\infty$ are called {\it tail-equivalent}, denoted by $p\sim q$, if there are $m,n\geq 0$ such that $\tau_{>m}(p)=\tau_{>n}(q)$. This defines an equivalence relation on $E^\infty$. We denote by $\widetilde {E^\infty}$ the set of
tail-equivalence classes, and for a path $p\in E^\infty$ denote the corresponding class by $[p]$.

A right-infinite path $p\in E^\infty$ is called {\it cyclic} if $p=ccc\dots$ where $c$ is a (finite) closed path. A $p\in E^\infty$ is called {\it rational} if $p$ is tail-equivalent to a cyclic path. Otherwise $p$ is called {\it irrational}. The class $[p]$ is called {\it rational} if $p$ is rational and {\it irrational} if $p$ is irrational.

If $[p]\in \widetilde {E^\infty}$, then the corresponding Chen module is the $K$-vector space $V_{[p]}$ with basis $[p]$. One can make the vector space $V_{[p]}$ a  right $L_K(E)$-module as follows. For any $v\in E^0$, $e\in E^1$ and $q\in [p]$ define
\begin{align*}
q\cdot v&=\begin{cases}q,\quad \quad \, \,   &\text{if }v=s(q) \\0,\quad&\text{else}
\end{cases}, \\
q\cdot e&=\begin{cases}\tau_{>1}(q),  \, \, &\text{if }e=\tau_{\leq 1}(q)\\0,\quad&\text{else}
\end{cases},\\
q\cdot e^*&=\begin{cases}eq,\quad \, \, \, \, \, &\text{if }r(e)=s(q)\\0,\quad&\text{else}
\end{cases}.
\end{align*}
The $K$-linear extension of this action to all of $V_{[p]}$ endows $V_{[p]}$ with the structure of a right $L_K(E)$-module. Chen has proven that the module 
 $V_{[p]}$  is simple and that $V_{[p]}\simeq V_{[q]}$ as right $L_K(E)$-modules if and only if $[p]=[q]$ \cite{C}.

Let $u\in E^0$ be a {\it sink}, i.e. a vertex that emits no edges. The corresponding Chen module is the $K$-vector space $\mathcal{N}_{u}$ with basis the set of finite paths ending in $u$. The $K$-vector space $\mathcal{N}_{u}$ becomes a right $L_K(E)$-module as follows. For a finite path $q=q_1\dots q_l\in E^l$ and $0\leq n \leq l$ we define $\tau_{\leq n}(q):=q_{1}\dots q_n\in E^n$ and $\tau_{>n}(q):=q_{n+1}\dots q_l\in E^{l-n}$ (if $n=0$, then $\tau_{\leq n}(q)=s(q)$ and if $n=l$, then $\tau_{>n}(q)=r(q)$). For a $v\in E^0$ set $\tau_{\leq 1}(v):=v$ and $\tau_{> 1}(v):=v$. For any $v\in E^0$, $e\in E^1$ and finite path $q$ ending in $u$ define $q\cdot v$, $q\cdot e$ and $q\cdot e^*$ as in the previous paragraph. The $K$-linear extension of this action to all of $\mathcal{N}_{u}$ endows $\mathcal{N}_{u}$ with the structure of a right $L_K(E)$-module. Chen has proven that the module $\mathcal{N}_{u}$ is simple and that $\mathcal{N}_{u}\simeq \mathcal{N}_{v}$ as right $L_K(E)$-modules if and only if $u=v$. Moreover, if $p\in E^\infty$ and $u$ is a sink, then $V_{[p]}\not\simeq \mathcal{N}_{u}$ as right $L_K(E)$-modules \cite{C}.

Translated to the unweighted setting the definition of a representation graph (Definition~\ref{defwp}) reduces to the following. 


\begin{definition}\label{defwp2}
Let $E$ be a graph. A {\it representation graph} for $E$ is a pair $(F,\phi)$, where $F=(F^0,F^1,s_F,r_F)$ is a directed graph and $\phi=(\phi^0,\phi^1):F\rightarrow E$ is a homomorphism of directed graphs such that the following hold:
\begin{enumerate}
\item For any $v\in F^0$ such that $\phi^0(v)$ is not a sink, we have $|s_F^{-1}(v)|=1$.
\smallskip
\item For any $v\in F^0$, the map $\phi^1: r^{-1}(v)\rightarrow r^{-1}(\phi^0(v))$ is bijective. 
\end{enumerate}
\end{definition}

\begin{example}\label{hfynvhfhraaaaa}
Let $E$ be a graph with one vertex and two loops: 
\[\xymatrix{ v\ar@(dr,ur)_{\mathtt{e}}\ar@(dl,ul)^{\mathtt{f}}}.\] 
Then the universal covering graph of $E$ is the Cayley graph of the free group with two generators $\mathbb F_2$. 


\end{example}

Let $(F,\phi)$ be a representation graph for $E$ and $V_F$ the $K$-vector space with basis $F^0$. For any $v\in E^0$, $e\in E^1$ and $u\in F^0$ define 
\begin{align*}
u\cdot v&=\begin{cases}u,\quad\quad \quad&\text{if }\phi^0(u)=v\\0,& \text{else} \end{cases},\\
u\cdot e&=\begin{cases}r_F(f),\quad&\text{if }\exists f\in s_F^{-1}(u):~\phi^1(f)=e\\0,& \text{else} \end{cases},\\
u\cdot e^*&=\begin{cases}s_F(f),\quad&\text{if }\exists f\in r_F^{-1}(u):~\phi^1(f)=e\\0,& \text{else}
 \end{cases}.
\end{align*}
The $K$-linear extension of this action to all of $V_F$ endows $V_F$ with the structure of a right $L_K(E)$-module. We call $V_F$ the {\it $L_K(E)$-module defined by $(F,\phi)$}.

We will construct for any Chen module $V_{[p]}$ (resp. $\mathcal{N}_{u}$) a representation graph $(F,\phi)$ such that the $L_K(E)$-module $V_F$ is isomorphic to $V_{[p]}$ (resp. $\mathcal{N}_{u}$). We divide this into three cases: 

\subsubsection{The case of $\mathcal{N}_{u}$, where $u$ is a sink}
Let $u\in E^0$ be a sink. We denote by $P$ the set of all nontrivial finite paths ending in $u$. Define a directed graph $F$ by 
\begin{align*}
F^0&=\{v\}\sqcup \{v_{p}\mid p\in P \},\\
F^1&=\{f_{p}\mid p\in P\},\\
s_F(f_{p})&=v_{p},\quad r_F(f_{p})=\begin{cases}v_{\tau_{>1}(p)},\quad&\text{if }|p|\geq 2,\\v,\quad&\text{if }|p|=1.\end{cases}
\end{align*}
Define $\phi^0(v)=u$, $\phi^0(v_{p})=s(p)$ and $\phi^1(f_{p})=\tau_{\leq 1}(p)$. One checks easily that $\phi=(\phi^0,\phi^1):F\to E$ is a homomorphism of directed graphs and that Conditions (1) and (2) in Definition \ref{defwp2} are satisfied. Hence $(F,\phi)$ is a representation graph for $E$.

Define a map $\alpha:F^0\to P\cup\{u\}$ by $\alpha(v)=u~\text{  and }~\alpha(v_{p})=p$.
Obviously $\alpha$ is a bijection and hence it induces an isomorphism $\hat\alpha:V_F\rightarrow \mathcal{N}_{u}$ of $K$-vector spaces. One checks easily that $\hat\alpha$ is an isomorphism of $L_K(E)$-modules.

\subsubsection{The case of $V_{[p]}$, where  $[p]$ is irrational} \label{irrationalcase}
Let $[p]\in \widetilde {E^\infty}$ be an irrational class. Then 
\begin{equation}
\label{eq:taumn:1}
\tau_{>m}(p)\neq\tau_{>n}(p)\text{ for any distinct }m,n\geq 0.
\end{equation}

Write $p=p_1p_2p_3\dots$. For any $i\in\N$ we denote by $P_i$ the set of all nontrivial finite paths $q$ such that $r(q)=s(p_i)$ and the last letter of $q$ is not equal to $p_{i-1}$ if $i\geq 2$. Define $F$ by 
\begin{align*}
F^0&=\{v_i\mid i\in \N\}\sqcup \{v_{i,q}\mid i\in \N, q\in P_i\},\\
F^1&=\{f_i\mid i\in \N\}\sqcup \{f_{i,q}\mid i\in \N, q\in P_i\},\\
s_F(f_i)&=v_i,\quad r_F(f_i)=v_{i+1},\\
s_F(f_{i,q})&=v_{i,q},\quad r_F(f_{i,q})=\begin{cases}v_{i,\tau_{>1}(q)},\quad&\text{if }|q|\geq 2,\\v_{i},\quad&\text{if }|q|=1.\end{cases}
\end{align*}

Define $\phi^0(v_i)=s(p_i)$, $\phi^0(v_{i,q})=s(q)$, $\phi^1(f_i)=p_i$ and $\phi^1(f_{i,q})=\tau_{\leq 1}(q)$. One checks easily that $\phi=(\phi^0,\phi^1):F\to E$ is a homomorphism of directed graphs and that Conditions (1) and (2) in Definition \ref{defwp2} are satisfied. Hence $(F,\phi)$ is a representation graph for $E$.

\begin{example}
For the graph $E$ of (\ref{lustigdog}) and the infinite irrational path $p=efef^2ef^3\cdots$, the above construction gives the representation graph (\ref{wlpa45}). 
\end{example}

Define a map $\beta:F^0\to [p]$ by 
\[\beta(v_i)=p_ip_{i+1}\dots~\text{  and }~\beta(v_{i,q})=qp_ip_{i+1}\dots.\]

\begin{lemma}\label{lembet}
The map $\beta$ defined above is bijective.
\end{lemma}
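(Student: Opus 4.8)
The plan is to exhibit $\beta$ as a bijection by producing a canonical decomposition of every element of $[p]$ that inverts $\beta$, with the irrationality hypothesis (\ref{eq:taumn:1}) doing the essential work. First I would record that $\beta$ indeed lands in $[p]$: both $\beta(v_i)=\tau_{>i-1}(p)$ and $\beta(v_{i,q})=q\,\tau_{>i-1}(p)$ have $\tau_{>i-1}(p)$ as a tail, so they are tail-equivalent to $p$, and the equality $r(q)=s(p_i)$ guaranteed by $q\in P_i$ is exactly what makes $q\,p_ip_{i+1}\cdots$ a legitimate infinite path.

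For surjectivity, given $w\in[p]$ I would set $m_0:=\min\{m\geq 0\mid \exists\,n\geq0,\ \tau_{>m}(w)=\tau_{>n}(p)\}$, which is well defined since $w\sim p$, and pick the corresponding $n_0$ (unique by (\ref{eq:taumn:1})). If $m_0=0$ then $w=\tau_{>n_0}(p)=\beta(v_{n_0+1})$. If $m_0\geq 1$, write $q:=\tau_{\leq m_0}(w)$, so that $w=q\,\tau_{>n_0}(p)$ with $|q|=m_0\geq 1$; then $q\in P_{n_0+1}$ and $w=\beta(v_{n_0+1,q})$. The membership $q\in P_{n_0+1}$ is where minimality of $m_0$ is used: if the last letter of $q$ equalled $p_{n_0}$ (the only thing to check, and only when $n_0\geq1$), then $\tau_{>m_0-1}(w)=p_{n_0}\,\tau_{>n_0}(p)=\tau_{>n_0-1}(p)$ would contradict the minimality of $m_0$.

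For injectivity I would compare two elements with equal image by repeatedly applying the shift $\tau_{>(\cdot)}$ and invoking (\ref{eq:taumn:1}) to force indices to agree. A collision $\beta(v_i)=\beta(v_j)$ gives $\tau_{>i-1}(p)=\tau_{>j-1}(p)$, hence $i=j$ directly from (\ref{eq:taumn:1}). For a mixed collision $\beta(v_{i,q})=\beta(v_j)$ with $\ell:=|q|$, shifting by $\ell$ yields $\tau_{>i-1}(p)=\tau_{>j-1+\ell}(p)$, so $i=j+\ell$, and comparing the first $\ell$ letters forces the last letter of $q$ to be $p_{i-1}$ while $i\geq2$, contradicting $q\in P_i$. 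The same mechanism rules out $\beta(v_{i,q})=\beta(v_{j,q'})$ with $|q|<|q'|$: one extracts that the last letter of $q'$ equals $p_{j-1}$ with $j\geq 2$, again contradicting $q'\in P_j$; hence $|q|=|q'|$, and then (\ref{eq:taumn:1}) gives $i=j$ and matching initial segments give $q=q'$.

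The only point deserving care is the edge case where a tagged vertex would carry index $1$ (equivalently $n_0=0$), for which the defining condition of $P_i$ imposes no constraint on the last letter. In the surjectivity step this causes no trouble, since $q\in P_1$ only requires $r(q)=s(p_1)$, which holds automatically. In the injectivity step the index arithmetic forced by (\ref{eq:taumn:1}) always lands the relevant tagged vertex at an index $\geq 2$, so the nonvacuous part of its $P_i$-condition is exactly what delivers the contradiction. I expect the main obstacle to be purely organisational: arranging these shift-and-compare computations so that (\ref{eq:taumn:1}) is invoked cleanly and the role of the last-letter condition defining $P_i$ is transparent in each of the cases above.
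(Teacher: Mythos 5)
Your proof is correct and follows essentially the same route as the paper: the same minimal-shift decomposition $w=\tau_{\leq m_0}(w)\,\tau_{>n_0}(p)$ for surjectivity, and the same shift-and-compare use of \eqref{eq:taumn:1} for injectivity, with the last-letter condition on $P_i$ delivering the contradictions. The only (welcome) addition is your explicit check that the index arithmetic always places the relevant tagged vertex at an index $\geq 2$, a point the paper leaves implicit.
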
 
\begin{proof}
(Surjectivity) Let $p'\in [p]$. Let $k\geq 0$ be minimal such that $\tau_{>k}(p')=\tau_{>l}(p)$ for some $l\geq 0$. If $k=0$, then $p'=p_ip_{i+1}\dots$ for some $i$. Hence $p'=\beta(v_i)$. If $k>0$, then $p'=p'_1\dots p'_kp_ip_{i+1}\dots$ for some $i$. Clearly $p'_k\neq p_{i-1}$ because of the minimality of $k$. Hence $q:=p'_1\dots p'_k\in P_i$ and $p'=\beta(v_{i,q})$. 

(Injectivity) Because of \eqref{eq:taumn:1}, it cannot happen that $\beta(v_i)=\beta(v_j)$ for some $i\neq j\in\N$. Assume now that $\beta(v_{i,q})=\beta(v_j)$ where $1\leq i,j\leq n$ and $q\in P_i$. Then clearly 
\[p_ip_{i+1}\dots=\tau_{>|q|}(\beta(v_{i,q}))=\tau_{>|q|}(\beta(v_j))=p_{r}p_{r+1}\dots\]
for some $r\in\N$. It follows from \eqref{eq:taumn:1} that $r=i$. Hence
\[q_{|q|}p_ip_{i+1}\dots=\tau_{>|q|-1}(\beta(v_{i,q}))=\tau_{>|q|-1}(\beta(v_j))=p_{i-1}p_ip_{i+1}\dots\]
and hence $q_{|q|}=p_{i-1}$, which contradicts the assumption that $q\in P_i$.

Assume now that $\beta(v_{i,q})=\beta(v_{j,q'})$ where $1\leq i,j\leq n$, $q\in P_i$ and $q'\in P_j$. If $|q|\neq |q'|$, then we obtain a contradiction as in the previous paragraph (say $|q|<|q'|$; then consider $\tau_{>|q|}(\beta(v_{i,q}))=\tau_{>|q|}(\beta(v_{j,q'})$). Suppose now that $|q|=|q'|$. It follows that $q=q'$ since $\beta(v_{i,q})=\beta(v_{j,q'})$. Moreover, \eqref{eq:taumn:1} implies that $i=j$ as desired.
\end{proof}

Since $\beta$ is a bijection, it induces an isomorphism $\hat\beta:V_F\rightarrow V_{[p]}$ of $K$-vector spaces. One checks easily that $\hat\beta$ is an isomorphism of $L_K(E)$-modules.

\subsubsection{The case of $V_{[p]}$, where  $[p]$ is rational}\label{rationalcase}
Let $[p]\in \widetilde {E^\infty}$ be a rational class. Then we may assume that $p$ is cyclic, i.e. $p=ccc\dots$, where $c$ is a closed path. We also may assume that $c$ is {\it simple}, i.e. $c$ is not a power of a shorter closed path. 

Suppose that $c=c_1\dots c_n$. For any $1\leq i\leq n$ we denote by $P_i$ the set of all nontrivial finite paths $q$ such that $r(q)=s(c_i)$ and the last letter of $q$ is not equal to $c_{i-1}$, respectively $c_n$ if $i=1$. Define $F$ by 
\begin{align*}
F^0&=\{v_i\mid 1\leq i\leq n\}\sqcup \{v_{i,q}\mid 1\leq i\leq n, q\in P_i\},\\
F^1&=\{f_i\mid 1\leq i\leq n\}\sqcup\{f_{i,q}\mid 1\leq i\leq n, q\in P_i\},\\
s_F(f_i)&=v_i,\quad r_F(f_i)=v_{i+1},\\
s_F(f_{i,q})&=v_{i,q},\quad r_F(f_{i,q})=\begin{cases}v_{i,\tau_{>1}(q)},\quad&\text{if }|q|\geq 2,\\v_{i},\quad&\text{if }|q|=1.\end{cases}
\end{align*}
Here we use the convention $n+1=1$. Define $\phi^0(v_i)=s(c_i)$, $\phi^0(v_{i,q})=s(q)$, $\phi^1(f_i)=c_i$ and $\phi^1(f_{i,q})=\tau_{\leq 1}(q)$. One checks easily that $\phi=(\phi^0,\phi^1):F\to E$ is a homomorphism of directed graphs and that Conditions (1) and (2) in Definition \ref{defwp2} are satisfied. Hence $(F,\phi)$ is a representation graph for $E$.

\begin{example}
For the graph $E$ of (\ref{lustigdog}) and the infinite rational path $p=efgefg\cdots$, the above construction gives the representation graph (\ref{wlpa4}). 
\end{example}

Define a map $\gamma:F^0\to [p]$ by 
\[\gamma(v_i)=c_i\dots c_nccc\dots~\text{  and }~\gamma(v_{i,q})=qc_i\dots c_nccc\dots.\]

\begin{lemma}\label{lemword}
Let $x_1x_2x_3\dots$ be a right-infinite word over some alphabet. Let $n\in \N$ such that $x_{r}=x_{r+n}$ for any $r\in \N$. If there is an $n'\in \N$ such that $n'<n$ and $x_{r}=x_{r+n'}$ for any $r\in \N$, then there is an $m\in \N$ such that $m<n$, $m|n$ and $x_{r}=x_{r+m}$ for any $r\in \N$.
\end{lemma}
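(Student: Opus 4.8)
The plan is to pass to the set of periods of the word and to identify the desired $m$ as its smallest element. Define
\[ S := \{ k \in \N \mid x_r = x_{r+k}\ \text{for all}\ r \in \N \}. \]
By hypothesis $n \in S$, so $S \neq \emptyset$ and, by well-ordering, has a least element $m := \min S$. The claim I aim for is that $m$ divides every element of $S$. Granting this, applying it to $n \in S$ gives $m \mid n$; moreover $n' \in S$ forces $m \le n'$, and since $n' < n$ we get $m < n$; finally, $m \in S$ is precisely the statement $x_r = x_{r+m}$ for all $r$. Thus $m$ is the period required by the lemma, and the proof reduces to the divisibility claim.

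The argument rests on two elementary closure properties of $S$. For addition: if $c, d \in S$, then $x_{r+c+d} = x_{(r+c)+d} = x_{r+c} = x_r$ for every $r$, so $c + d \in S$; in particular $qm \in S$ for every $q \in \N$. For subtraction of a smaller period from a larger one: if $a, b \in S$ with $b < a$, then for every $r \in \N$,
\[ x_{r+(a-b)} = x_{r+(a-b)+b} = x_{r+a} = x_r, \]
where the first equality applies $b \in S$ to the index $r+(a-b) \ge 1$ and the last applies $a \in S$; hence $a - b \in S$. Here I would stress the single point that needs care, and which I expect to be the only genuine subtlety: because the word is one-sided, every manipulation must extend indices \emph{forward} (by adding a period), never backward, and the displayed identity is arranged precisely so that it only ever adds $b$ to an index that is already positive.

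It then remains to show $m \mid a$ for all $a \in S$. Given $a \in S$, write $a = qm + s$ by division with remainder, where $q \ge 1$ (since $a \ge m$) and $0 \le s < m$. By the addition property $qm \in S$, and $a = qm + s \ge qm$. If $s > 0$, then $qm < a$, so the subtraction property yields $a - qm = s \in S$ with $0 < s < m$, contradicting the minimality of $m$. Hence $s = 0$ and $m \mid a$. Taking $a = n$ gives $m \mid n$ and, as noted above, $m < n$ together with $x_r = x_{r+m}$ for all $r$, which is exactly the conclusion of the lemma.
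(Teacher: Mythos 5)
Your proof is correct and follows essentially the same route as the paper: take the minimal period $m$, use division with remainder $n = qm + t$, and derive a contradiction with minimality if $t \neq 0$. The paper does this in one line via the computation $x_r = x_{r+n} = x_{r+qm+t} = x_{r+t}$, whereas you factor the same idea through closure of the set of periods under addition and (forward-safe) subtraction; the extra care you take about never shifting indices backward is exactly the point that makes the argument valid for one-sided words.
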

\begin{proof}
Set $A:=\{n'\in \{1,\dots,n-1\}\mid x_{r}=x_{r+n'}\text{ for any }r\in \N\}$ and $m:=\min(A)$. Assume that $m$ does not divide $n$. Then there are $s, t\in \N$ such that $t<m$ and $n=sm+t$. Clearly $x_r=x_{r+n}=x_{r+sm+t}=x_{r+t}$ for any $r\in \N$. Hence $t\in A$, which contradicts the minimality of $m$. Thus $m|n$.
\end{proof}

\begin{lemma}\label{lemgam}
The map $\gamma$ defined above is bijective.
\end{lemma}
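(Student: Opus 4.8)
The plan is to follow the proof of Lemma~\ref{lembet} almost verbatim, the only genuinely new ingredient being that the role played there by property~\eqref{eq:taumn:1} must now be taken over by a \emph{periodicity} statement adapted to the cyclic path $p=ccc\dots$. Concretely, I would first establish that for $a,b\ge 0$ one has $\tau_{>a}(p)=\tau_{>b}(p)$ if and only if $a\equiv b\pmod n$. The backward implication is immediate from $p=c^\infty$. For the forward one, write $p=x_1x_2\dots$ as a word in the edges, so that $x_r=x_{r+n}$ for all $r$; assuming $a<b$ and setting $d=b-a$ gives $x_s=x_{s+d}$ for all $s>a$, and reducing $d$ modulo $n$ together with the pure $n$-periodicity of $x$ propagates this to $x_s=x_{s+d'}$ for \emph{all} $s\ge 1$, where $d'=d\bmod n$. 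If $d'\ne 0$, then Lemma~\ref{lemword} (applied with $n'=d'$) produces a proper divisor $m\mid n$, $m<n$, with $x_s=x_{s+m}$ for all $s$, that is $c=(c_1\dots c_m)^{n/m}$, contradicting the simplicity of $c$. Hence $d'=0$, i.e. $n\mid d$. This is the step where the hypothesis that $c$ is simple enters, and it is the main (and essentially the only) obstacle; everything afterwards is bookkeeping with indices modulo $n$.

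Surjectivity is then handled exactly as in Lemma~\ref{lembet}: given $p'\in[p]$, choose $k\ge 0$ minimal with $\tau_{>k}(p')=\tau_{>l}(p)$ for some $l$, and write $l\equiv i-1\pmod n$ with $1\le i\le n$, so that $\tau_{>l}(p)=c_i\dots c_nccc\dots=\gamma(v_i)$. If $k=0$ then $p'=\gamma(v_i)$, while if $k>0$ then $p'=q\,c_i\dots c_nccc\dots$ with $q=p'_1\dots p'_k$, where minimality of $k$ forces the last letter of $q$ to differ from $c_{i-1}$ (respectively $c_n$ if $i=1$), so $q\in P_i$ and $p'=\gamma(v_{i,q})$.

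For injectivity I would treat the same three cases as in Lemma~\ref{lembet}. If $\gamma(v_i)=\gamma(v_j)$ then $\tau_{>i-1}(p)=\tau_{>j-1}(p)$, so $i\equiv j\pmod n$ and hence $i=j$ by the periodicity fact. If $\gamma(v_{i,q})=\gamma(v_j)$, applying $\tau_{>|q|}$ gives $\tau_{>i-1}(p)=\tau_{>|q|+j-1}(p)$, whence $|q|+j\equiv i\pmod n$; then applying $\tau_{>|q|-1}$ and comparing first letters yields $q_{|q|}=c_{i-1}$ (respectively $c_n$ if $i=1$), contradicting $q\in P_i$. Finally, if $\gamma(v_{i,q})=\gamma(v_{j,q'})$ with $|q|\ne|q'|$, say $|q|<|q'|$, then applying $\tau_{>|q|}$ reduces matters to the previous case, since the suffix $q''=q'_{|q|+1}\dots q'_{|q'|}$ again lies in $P_j$ and satisfies $\gamma(v_i)=\gamma(v_{j,q''})$, which was just shown impossible; whereas if $|q|=|q'|$ then comparing the first $|q|$ letters forces $q=q'$, and then $\tau_{>i-1}(p)=\tau_{>j-1}(p)$ gives $i=j$ by periodicity, so the two basis vertices coincide. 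This exhausts all cases and shows that $\gamma$ is a bijection.
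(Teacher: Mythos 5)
Your proposal is correct and follows essentially the same route as the paper: surjectivity via the minimal $k$ with $\tau_{>k}(p')=\tau_{>l}(p)$, and injectivity by the same three case distinctions, with the simplicity of $c$ entering through Lemma~\ref{lemword} exactly as in the paper's argument. The only difference is organizational — you isolate the statement $\tau_{>a}(p)=\tau_{>b}(p)\Leftrightarrow a\equiv b\pmod n$ as a preliminary claim, whereas the paper invokes the same periodicity reasoning inline in each case; both versions are sound.
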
 
\begin{proof}
(Surjectivity) Let $p'\in [p]$. Let $k\geq 0$ be minimal such that $\tau_{>k}(p')=\tau_{>l}(p)$ for some $l\geq 0$. If $k=0$, then $p'=c_i\dots c_nccc\dots$ for some $i$. Hence $p'=\gamma(v_i)$. If $k>0$, then $p'=p'_1\dots p'_kc_i\dots c_nccc\dots$ for some $i$. Clearly $p'_k\neq c_{i-1}$ because of the minimality of $k$. Hence $q:=p'_1\dots p'_k\in P_i$ and $p'=\gamma(v_{i,q})$. 

(Injectivity) Assume $\gamma(v_i)=\gamma(v_j)$ for some $1\leq i<j\leq n$. Write $\gamma(v_i)=x_1x_2x_3\dots$ and $\gamma(v_j)=y_1y_2y_3\dots$. Clearly 
\begin{equation*}
x_{r}=x_{r+n}\text{ for any }r\in \N.
\end{equation*}
Moreover, $y_r=x_{r+j-i}$ and hence 
\begin{equation*}
x_r=x_{r+j-i}\text{ for any }r\in \N
\end{equation*}
since by assumption $x_r=y_r$. Obviously $1\leq j-i<n$. It follows from Lemma \ref{lemword} that there is an $m\in \N$ such that $m<n$, $m|n$ and $x_{r}=x_{r+m}$ for any $r\in \N$. Since $\gamma(v_i)=c_i\dots c_nc_1\dots c_nc_1\dots c_n\dots$ we obtain $c=dd\dots d$ where $d=c_1\dots c_m$. But this contradicts the simplicity of $c$.

Assume now that $\gamma(v_{i,q})=\gamma(v_j)$ where $1\leq i,j\leq n$ and $q\in P_i$. Then clearly 
\[c_i\dots c_nccc\dots=\tau_{>|q|}(\gamma(v_{i,q}))=\tau_{>|q|}(\gamma(v_j))=c_r\dots c_nccc\dots\]
for some $1\leq r\leq n$. If $r\neq i$, then we obtain a contradiction as in the previous paragraph. Suppose now that $r=i$. Then
\[q_{|q|}c_i\dots c_nccc\dots=\tau_{>|q|-1}(\gamma(v_{i,q}))=\tau_{>|q|-1}(\gamma(v_j))=c_{i-1}c_i\dots c_nccc\dots\]
and hence $q_{|q|}=c_{i-1}$, which contradicts the assumption that $q\in P_i$.

Assume now that $\beta(v_{i,q})=\beta(v_{j,q'})$ where $1\leq i,j\leq n$, $q\in P_i$ and $q'\in P_j$. If $|q|\neq |q'|$, then we obtain a contradiction as in the previous paragraph (say $|q|<|q'|$; then consider $\tau_{>|q|}(\gamma(v_{i,q}))=\tau_{>|q|}(\gamma(v_{j,q'})$). Suppose now that $|q|=|q'|$. It follows that $q=q'$ since $\gamma(v_{i,q})=\gamma(v_{j,q'})$. If $i\neq j $, then we obtain a contradiction as in the paragraph before the previous one. Thus we also get $i=j$ as desired.
\end{proof}

Since $\gamma$ is a bijection, it induces an isomorphism $\hat\gamma:V_F\rightarrow V_{[p]}$ of $K$-vector spaces. One checks easily that $\hat\gamma$ is an isomorphism of $L_K(E)$-modules.

We can recover the properties of Chen simple modules via our machinery of  representation graphs. 

\begin{theorem}
Let $E$ be a graph. Let $p$ and $q$ be infinite paths. Then

\begin{enumerate}[\upshape(i)]

\item The Chen module $V_{[p]}$ is a simple $L_K(E)$-module. 

\smallskip 

\item The $L_K(E)$-modules $V_{[p]}$ and $V_{[q]}$ are isomorphic if and only if $[p]=[q]$. 

\smallskip 

\item The $L_K(E)$-module $V_{[p]}$ is graded if and only if $p$ is an irrational infinite path. 
\end{enumerate}
\end{theorem}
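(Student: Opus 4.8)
The plan is to route all three statements through the representation-graph machinery of the previous sections, exploiting that for each infinite path $p$ we have already produced in \S\ref{irrationalcase} and \S\ref{rationalcase} a representation graph $(F,\phi)$ for $E$ together with an $L_K(E)$-module isomorphism ($\hat\beta$ in the irrational case, $\hat\gamma$ in the rational case) identifying $V_F$ with the Chen module $V_{[p]}$. Thus every assertion about $V_{[p]}$ becomes an assertion about $V_F$, to which Theorem~\ref{thmirr}, Theorem~\ref{gradedrepres} and Proposition~\ref{prop43} apply directly. The single structural feature driving everything is Condition~(1) of Definition~\ref{defwp2}: over a non-sink vertex each vertex of $F$ emits exactly one edge, so following the unique real forward edges out of any $u\in F^0$ reads off precisely the infinite path $\beta(u)$ (respectively $\gamma(u)$); equivalently, the set of images of all-real forward paths from $u$ is exactly the set of finite prefixes of that infinite path.

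For (i) I would verify that the constructed $(F,\phi)$ is an irreducible representation graph in the sense of Definition~\ref{defrepirres} and then invoke Theorem~\ref{thmirr}. Connectedness is immediate from the construction (the spine, resp. the cycle $v_1\to\cdots\to v_n\to v_1$, together with the in-pointing trees $P_i$). For the separation condition $\phi(\star(u))\neq\phi(\star(v))$ when $u\neq v$, I would argue that a finite all-real forward path of $E$ lifts through $\phi$ only to an all-real forward path of $F_d$, since $\phi$ preserves the real/ghost distinction; hence a prefix of $\beta(u)$ can lie in $\phi(\star(v))$ only if it is also a prefix of $\beta(v)$. Because $\beta$ (resp. $\gamma$) is a bijection onto $[p]$ by Lemma~\ref{lembet} (resp. Lemma~\ref{lemgam}), distinct vertices carry distinct infinite paths, which first differ at some position; the corresponding prefix then separates $\phi(\star(u))$ from $\phi(\star(v))$. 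I expect this separation step to be the main obstacle: one must rule out that a ghost-edge detour in $F_d$ reproduces, starting from $v$, the image of a prefix of $\beta(u)$, and this rigidity must be extracted from the unique-out-edge condition together with $\phi$ respecting the real/ghost splitting.

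For (ii) the direction $[p]=[q]\Rightarrow V_{[p]}\cong V_{[q]}$ is trivial, the two modules being literally equal. For the converse I would combine (i) with Proposition~\ref{prop43}: writing $V_{[p]}\cong V_F$ and $V_{[q]}\cong V_G$ for the irreducible representation graphs $F,G$ attached to $p,q$, an isomorphism $V_{[p]}\cong V_{[q]}$ yields $V_F\cong V_G$, whence $(F,\phi)\cong(G,\psi)$ in $\RG(E)$. An isomorphism $\alpha:F\to G$ with $\psi\alpha=\phi$ sends the unique forward edge out of a vertex to the unique forward edge out of its image and preserves the $E$-label, so it matches the forward real rays; reading off labels gives $\beta_F(u)=\beta_G(\alpha(u))$ for every $u\in F^0$. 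This common infinite path lies in both $[p]$ and $[q]$, forcing $[p]=[q]$.

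For (iii) I would apply Theorem~\ref{gradedrepres}, by which $V_F$ is graded iff every closed reduced path of $F$ has length map $|\cdot|$ equal to $0$ (here $n=1$ and $|\cdot|$ counts real minus ghost edges). In the irrational case $F$ is a tree: the spine is an infinite ray and the $P_i$ are attached along in-edges, so $\pi(F)=1$ and the criterion holds vacuously, giving $V_{[p]}$ graded, consistent with Corollary~\ref{acyclicrep}. In the rational case $F$ contains the genuine cycle $v_1\to\cdots\to v_n\to v_1$; taking $p'$ to be this closed path and $q'=v_1$ gives $s(p')=r(p')=v_1=s(q')=r(q')$ with $|p'|=n\neq 0=|q'|$, so the criterion of Theorem~\ref{gradedrepres} fails and $V_{[p]}$ is not graded. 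Once the separation step of (i) is secured, all three parts then follow mechanically from the quoted results.
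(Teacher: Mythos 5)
Your proposal is correct and follows essentially the same route as the paper: identify $V_{[p]}$ with $V_F$ via $\hat\beta$ (resp.\ $\hat\gamma$), observe that $\phi({}_u\!\Path(F))$ is exactly the set of finite prefixes of $\beta(u)$ and use injectivity of $\beta$/$\gamma$ together with the fact that $\phi$ preserves the real/ghost splitting to get irreducibility and hence simplicity from Theorem~\ref{thmirr}, and settle (iii) by the tree-versus-cycle dichotomy and Theorem~\ref{gradedrepres}; the ``ghost-edge detour'' you flag as a potential obstacle is already neutralised by the real/ghost preservation, exactly as in the paper. The only (immaterial) divergence is in (ii), where you invoke Proposition~\ref{prop43} to get a full isomorphism of representation graphs, whereas the paper gets by with the weaker Proposition~\ref{prop42} ($\leftrightharpoons$-equivalence) and compares prefix sets directly.
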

\begin{proof}
(i) Suppose first that $p$ is irrational. Let $F$ be the representation graph constructed in (\ref{irrationalcase}). Then $V_F\cong V_{[p]}$. Clearly $F$ is connected. Moreover, $\phi({}_{v}\!\Path(F))=\{\tau_{\leq n}(\beta(v))\mid n\geq 0\}$ for any $v\in F^0$. It follows that $\phi({}_{u}\!\Path(F))\neq \phi({}_{v}\!\Path(F))$ for any $u\neq v\in F^0$ since the map $\beta:F^0\to [p]$ is injective by Lemma \ref{lembet}. Clearly this implies that $\phi({}_{u}\!\Path(F_d))\neq \phi({}_{v}\!\Path(F_d))$ for any $u\neq v\in F^0$ and hence $F$ satisfies Condition (2) in Definition \ref{defrepirres}. Thus $V_F\cong V_{[p]}$ is simple by Theorem \ref{thmirr}. The case that $p$ is rational is similar (just replace $\beta$ by $\gamma$).

(ii) Let $(F,\phi)$ and $(G,\psi)$ be the representation graphs constructed in (\ref{irrationalcase}) respectively (\ref{rationalcase}) such that $V_F\cong V_{[p]}$ and $V_G\cong V_{[q]}$. It suffices to show that $V_F\cong V_G$ implies $[p]=[q]$. So suppose that $V_F\cong V_G$. Then by Proposition \ref{prop42} we have $(F,\phi)\leftrightharpoons (G,\psi)$. Hence there are vertices $u\in F^0$ and $v\in G^0$ such that $\phi({}_{u}\!\Path(F_d))= \psi({}_{v}\!\Path(G_d))$. Clearly this implies $\phi({}_{u}\!\Path(F))= \psi({}_{v}\!\Path(G))$. But $\phi({}_{u}\!\Path(F))=\{\tau_{\leq n}(p')\mid n\geq 0\}$ for some $p'\in [p]$ and $\phi({}_{v}\!\Path(G))=\{\tau_{\leq n}(q')\mid n\geq 0\}$ for some $q'\in [q]$ (compare the previous paragraph). It follows that $p'=q'$ and thus $[p]=[p']=[q']=[q]$.

(iii) Note that by the constructions in (\ref{irrationalcase}) and (\ref{rationalcase}),  if the infinite path $p$ is irrational, then the representation graph $F$ is a tree, whereas if $p$ is rational then the representation graph $F$ has a cycle.   Now the statement follows immediately from Theorem~\ref{gradedrepres} and the fact that $V_F \cong V_{[p]}$ as $L_K(E)$-modules. 
\end{proof}

\section{Branching systems of weighted graphs and representations} \label{branchhonda}

Branching systems for Leavitt path algebras were systematically studied by Gon\c{c}alves and Royer (\cite{GR,GR-0,GR-1}). A branching system of a graph gives rise to a representation for its associated Leavitt path algebra. This notion was also generalised to other type of graphs, such as separated graphs and ultragraphs (\cite{GR-4}). 
Here we adopt this notion for the weighted graphs and show that a representation graph for a weighted graph gives a branching system for this graph. We will show that under certain assumptions on the field
each representation defined by a branching system contains a subrepresentation isomorphic to one
given by a representation graph. Conversely, every representation graph defines a branching system
that defines the same (isomorphic) representation as the graph itself.

\begin{definition}[Branching system of a weighted graph]
    Let $(E,w)$ be a weighted graph. Let $X$ be a set, $\{R_{e_i} \mid e_i \in \hat{E}^1 \}$ and $\{ D_{v} \mid v \in E^0 \}$ families of subsets of $X$ and $\{ g_{e_i} \colon R_{e_i} \hookrightarrow D_{r(e)}
    \mid e_i \in \hat{E}^1 \}$ a family of injections such that:
    \begin{enumerate}
    	\item $\{ D_{v} \mid v \in E^0 \}$ is a partition of $X$ (i.e. $D_v \cap D_u = \emptyset$ whenever
    		$v \neq u$ and $\bigcup_{v \in E^0}\limits D_v = X$);
    	\item for each $v \in E^0$ and $1 \leq i \leq w(v)$ the family of sets 
    		$\{ R_{e_i} \mid e \in s^{-1}(v), w(e) \geq i \}$ forms a partition of $D_v$;
    	\item Set $D_{e_i} = g_{e_i}(R_{e_i})$ for each $e_i \in \hat{E}^1$. 
    		Then for each $e \in E^1$ the family $\{ D_{e_i} \mid 1 \leq i \leq w(e) \}$ forms a 
    		partition of $D_{r(e)}$.
    \end{enumerate}
    We call the quadruple $X=(X, \{R_{e_i} \}, \{ D_{v} \}, \{g_{e_i} \})$ an {\it $(E,w)$-branching system} 
    or simply an {\it $E$-branching system}.
\end{definition}

When $w(E)=1$ the definition above reduces to the definition of an $E$-algebraic branching system of \cite{GR} with the only twist that the bijections $f_{e}$ thereof are called $g^{-1}_{e_1}$ here.
Note that we do not assume by default that the sets comprising a branching system are nonempty.

Let $M$ be the $K$-module of all functions $X \rightarrow K$ with respect to point-wise operations. 
Let $M_0$ denote the $K$-submodule of $M$ that consists of all functions with finite support.
We are going to define a structure of a right $L_K(E)$-module on $M$. 
In order to simplify notations, we will abuse the notation as follows. Let $Z \subseteq Y \subseteq X$ 
and $\psi : Y \rightarrow K$. By $\chi_Z \cdot \psi$ we denote the function $X \rightarrow K$
\[
	x \mapsto \begin{cases}
		\psi(x) & \text{ if } x \in Z \\
		0 & \text{ otherwise }.
	\end{cases}
\]
Using this convention, set for any $\phi \in M$, any $e_i \in \hat{E}^1$ and any $v \in E^0$
\begin{equation}
\label{eq:brs:action}
\begin{aligned}
    \phi.e_i &= \chi_{D_{e_i}} \cdot (\phi \circ g^{-1}_{e_i}) \\
    \phi.e_i^* &= \chi_{R_{e_i}} \cdot (\phi \circ g_{e_i}) \\
    \phi.v &= \chi_{D_v} \cdot \phi.
\end{aligned}
\end{equation}

We will show that the multiplication defined above may be extended by linearity to the
structure of a right $L_K(E)$-module on $M$. Prior to that we would like to stress that
\begin{itemize}
	\item by switching the action of $e_i$ and $e_i^*$ on $M$ we get a left $L_K(E)$-module,
		in case of an unweighted		
		graph $E$ this module coincides with the one defined in \cite{GR};
	\item the same action of $L_K(E)$ may be defined on the $K$-module $M_0$ of all
		functions in $M$ with \textit{finite support}. The proof of the next theorem
		remains valid if $M$ is replaced by $M_0$.
	\item the notion of a branching system generalises easily to the case of a not necessarily
		row-finite graph $(E,w)$. In this case one should simply drop the assumption
		that the family of sets $\{ R_{e_i} \mid e \in s^{-1}(v), w(e) \geq i \}$ covers $D_v$ 
		whenever this family is infinite. However one must still assume that this family
		is disjoint.		
\end{itemize}
 
\begin{theorem}
    \label{t.br1}
    Let $(E,w)$ be a weighted graph and $X$ an $(E,w)$-branching system. Then 
    the equalities \eqref{eq:brs:action} define a structure of a 
    right $L_K(E)$-module on $M$.
    \begin{proof}
        Due to the universal nature of $L_K(E)$ we only need to check that the
        standard generators $e_i, e_i^*$ and $v$ of $L_K(E)$ treated as endomorphisms
        of $M$ satisfy the relations (1)--(4) in Definition \ref{def3}.
        
        (i) Clearly $v$ is an idempotent for each $v \in E^0$; and for each $v \neq u \in E^0, \phi \in M$
        one has $\phi.v.u = 0$ as $D_v$ and $D_u$ are disjoint.
        
        (ii) Note that 
        \[
	        \phi.s(e).e_i
	        	= \chi_{D_{e_i}} \cdot ((\chi_{D_{s(e)}} \cdot \phi) \circ g^{-1}_{e_i})
	        	= \chi_{D_{e_i}} \cdot (\phi \circ g^{-1}_{e_i}) = \phi.e_i,
	    \]
	    where the second equality follows from the fact that $g^{-1}_{e_i} (D_{e_i}) =
	    R_{e_i} \subseteq D_{s(e)}$.
		Similarly,
		\begin{align*}
			\phi.e_i.r(e) &= \chi_{D_{r(e)}} \cdot (\chi_{D_{e_i}} \cdot (\phi \circ g^{-1}_{e_i}))
			= \chi_{D_{e_i}} \cdot (\phi \circ g^{-1}_{e_i}) = \phi.e_i,
		\end{align*}			    
		as $D_{e_i} \subseteq D_{r(e)}$. In the same way,
		\begin{align*}
			\phi.e_i^*.s(e) &= \chi_{D_{s(e)}} \cdot (\chi_{R_{e_i}} \cdot (\phi \circ g_{e_i}))
			= \chi_{R_{e_i}} \cdot (\phi \circ g_{e_i}) = \phi.e_i^*,
		\end{align*}			    
		for $R_{e_i} \subseteq D_{s(e)}$ and
		\begin{align*}
			\phi.r(e). e_i^* &= 
			\chi_{R_{e_i}} \cdot ((\chi_{D_{r(e)}} \cdot \phi) \circ g_{e_i})
			= \chi_{R_{e_i}} \cdot (\phi \circ g_{e_i}) = \phi.e_i^*,
		\end{align*}			    
		as $g_{e_i}(R_{e_i}) = D_{e_i} \subseteq D_{r(e)}$.
		
        (iii) Now let $e,f \in E^1$ such that $s(e) = s(f) = v \in E^0$. Then
        \begin{equation}
        	\label{eq:BR:t.br1:1}
        	\sum_{1 \le i \le w(v)} \phi.e_i^*.f_i
        	= \sum_{1 \le i \le \min(w(e),w(f))} \chi_{D_{f_i}} \cdot ((\chi_{R_{e_i}} \cdot \phi \circ g_{e_i})
        	\circ g^{-1}_{f_i}).
        \end{equation}
        Recall that $e_i^* = e_i = 0$ whenever $i > w(e)$.
        Note that if $f \neq e$ the image $g^{-1}_{f_i}(D_{f_i}) = R_{f_i}$ is disjoint with 
        $R_{e_i}$ and thus each summand in \eqref{eq:BR:t.br1:1} equals zero. If
        $f=e$ then we get 
        \[
        	(\chi_{R_{e_i}} \cdot \phi \circ g_{e_i}) \circ g^{-1}_{f_i} 
        = \phi 
        \] when restricted to $D_{e_i}$. This allows to rewrite the 
       	right-hand side of \eqref{eq:BR:t.br1:1} as
       	\[
       		\sum_{1 \le i \le w(e)} \chi_{D_{e_i}} \cdot \phi = \chi_{D_{r(e)}} \cdot \phi = \phi.r(e),
       	\]
       	where the first equality follows from the fact $\{ D_{e_i} \mid 1 \le i \le w(e)\}$ 
       	is a partition of $D_{r(e)}$.
       	
       	(iv) Finally, let $v \in E^0$ and $1 \leq i,j \le w(v)$. 
       	Then
       	\begin{equation}
	       	\label{eq:BR:t.br1:2}
       		\sum_{e \in s^{-1}(v)} \phi.e_i. e_j^* = 
       		\sum_{e \in s^{-1}(v)} 
       		\chi_{R_{e_j}} \cdot (\chi_{D_{e_i}} \cdot \phi \circ g^{-1}_{e_i}) \circ g_{e_j}.
       	\end{equation}
       	If $i \neq j$ then $g_{e_j}(R_{e_j}) = D_{e_j}$ which is disjoint with $D_{e_i}$ and each
       	summand above equals zero. Assume $j=i$. As $g_{e_i}(R_{e_i}) = D_{e_i}$, we get
       	\[
       		(\chi_{D_{e_j}} \cdot \phi \circ g^{-1}_{e_i}) \circ g_{e_j} = 
       		 \phi
       	\]
       	when restricted to $R_{e_i}$. Then the right-hand side of \eqref{eq:BR:t.br1:2} rewrites as 
       	\[
       		\sum_{e \in s^{-1}(v)} 
       		\chi_{R_{e_i}} \cdot \phi = \chi_{D_{v}} \cdot \phi = \phi.v,
       	\]
       	as $\{ R_{e_i} \mid e \in s^{-1}(v)\}$ partition $D_{s(e)}$.
       	Note that this computation makes sense even if the number of summands in
       	\eqref{eq:BR:t.br1:2} is infinite, as the supports of the summands are disjoint. However,
       	this is unnecessary as the corresponding relation in a weighted Leavitt algebra does not 
       	have to hold.
    \end{proof}
\end{theorem}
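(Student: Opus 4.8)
The plan is to invoke the universal property of $L_K(E)$ as the $K$-algebra presented by the generators $\{v, e_i, e_i^*\}$ subject to relations (1)--(4) of Definition~\ref{def3}. Since the formulas \eqref{eq:brs:action} assign to each generator a $K$-linear endomorphism of $M$, it suffices to verify that these assignments satisfy the four defining relations; the presentation then yields a unique $K$-algebra homomorphism $L_K(E)\to\End_K(M)^{\op}$ and hence the desired right $L_K(E)$-module structure. Throughout I would read each relation $a=b$ of $L_K(E)$ as the identity $\phi.a=\phi.b$ for every $\phi\in M$, computed by iterating the elementary actions in \eqref{eq:brs:action}.

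First I would dispatch the two ``bookkeeping'' relations. Relation (1) is immediate: because $\{D_v\mid v\in E^0\}$ partitions $X$, the operators $\phi\mapsto\chi_{D_v}\cdot\phi$ are pairwise orthogonal idempotents, so $\phi.v.u=\delta_{uv}\,\phi.v$. Relation (2) reduces to the two containments $R_{e_i}\subseteq D_{s(e)}$ and $D_{e_i}=g_{e_i}(R_{e_i})\subseteq D_{r(e)}$ built into the definition of a branching system: each of its four equalities merely says that multiplying a function already supported on the smaller set by the characteristic function of a larger set changes nothing.

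The heart of the argument is relations (3) and (4), where the partial injections $g_{e_i}$ and their inverses must cancel. For (3) I would expand $\sum_{i}\phi.e_i^*.f_i$ via \eqref{eq:brs:action}; a typical summand involves a composite of $g_{e_i}$ with $g_{f_i}^{-1}$, which is only relevant where the fibres $R_{e_i}$ and $R_{f_i}$ meet. Since $\{R_{e_i}\mid e\in s^{-1}(v),\,w(e)\ge i\}$ partitions $D_v$, the off-diagonal terms $e\neq f$ vanish, and on the diagonal the composite collapses to the identity on $D_{e_i}$, so the sum telescopes to $\chi_{D_{r(e)}}\cdot\phi=\phi.r(e)$ using that $\{D_{e_i}\mid 1\le i\le w(e)\}$ partitions $D_{r(e)}$. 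Relation (4) is the mirror image: expanding $\sum_{e\in s^{-1}(v)}\phi.e_i.e_j^*$, the partition $\{D_{e_i}\}$ of $D_{r(e)}$ annihilates the terms with $i\neq j$, while for $i=j$ the bijectivity of $g_{e_i}\colon R_{e_i}\to D_{e_i}$ makes $g_{e_i}^{-1}\circ g_{e_i}$ the identity on $R_{e_i}$, reducing the sum to $\chi_{D_v}\cdot\phi=\phi.v$ via the partition $\{R_{e_i}\mid e\in s^{-1}(v)\}$ of $D_v$.

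The main obstacle I anticipate is the careful tracking of domains for the partial maps $g_{e_i}$: one must pin down precisely the subset of $X$ on which each relevant composite is defined and equals the identity, and confirm that the characteristic-function prefactors restrict every expression to exactly that subset. I would also take care to respect the convention $e_i=e_i^*=0$ for $i>w(e)$, so that the index ranges in the sums of (3) and (4) match the partition hypotheses of the branching system and no phantom summands corrupt the telescoping.
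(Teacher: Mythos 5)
Your proposal is correct and follows essentially the same route as the paper's proof: verify relations (1)--(4) of Definition~\ref{def3} for the generator actions and invoke the universal property, with (1) and (2) handled by the disjointness of the $D_v$ and the containments $R_{e_i}\subseteq D_{s(e)}$, $D_{e_i}\subseteq D_{r(e)}$, and with (3) and (4) reduced to the vanishing of off-diagonal terms via the relevant partitions and the collapse of $g_{f_i}^{-1}\circ g_{e_i}$ (resp. $g_{e_j}\circ g_{e_i}^{-1}$) to the identity on the diagonal. The points you flag as requiring care (domain tracking for the partial maps and the convention $e_i=e_i^*=0$ for $i>w(e)$) are exactly the ones the paper attends to.
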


\medskip

\subsection{Branching systems on an interval}

The first series of examples of branching system is a generalization of the one given in 
\cite[Theorem 3.1]{GR}. However, in the 
weighted case it becomes clear that the distinction between sinks and not sinks made there
is redundant, as not only \textit{source sets}, but also \textit{range sets} must be partitioned.
Thus, even in the case of weight 1, we don't get \textit{precisely} the same branching 
system as in \cite{GR}, but \textit{morally} the same (in particular, the resulting classes
of representations coincide).

Let $(E,w)$ be a \textit{at most countable} weighted graph, i.e. $E^0$ and $E^1$ are both 
finite or countable. By fixing some linear order on $E^0$ we may write
$E^0 = \{ v^1, v^2, \dots \}$. For each
$i$ set $D_{v^i} = [i-1,i)$. Clearly, such sets are disjoint. Put $X = \bigcup_i D_{v^i}$.

Now fix a vertex $v^i \in E^0$ and $1 \le j \le w(v^i)$.
The set $X^i_j = \{ e \in s^{-1}(v^i) \mid w(e) \geq j \}$ is finite (as $E$ is row-finite). 
By ordering this set we can rewrite it as
$X^i_j = \{e^{i,1}, e^{i,2}, \dots \}$. For each $1\leq k \leq |X^i_j|$ set
\[
	R_{e^{i,k}_j} = [i-1+\frac{k-1}{|X^i_j|},i-1+\frac{k}{|X^i_j|}).
\]
It is clear that the set $\{ R_{e_j} \mid e \in X^i_j \}$ forms a partition of $D_{v^i}$.

In a similar fashion fix some $e \in E^1$ and let $v^i = r(e)$. For each $1 \leq j \leq w(e)$
set
\[
	D_{e_j} = [i-1+\frac{j-1}{w(e)}, i-1+\frac{j}{w(e)}).
\]
Clearly, the family of sets $\{ D_{e_j} \mid 1 \le j \le w(e) \}$ forms a partition of $D_{v^i}$.

Finally, the bijections $g_{e^i_j} \colon R_{e^i_j} \rightarrow D_{e^i_j}$ may be
chosen arbitrary, for example, a composition of a translation, scaling and another translation.
The following theorem is now obvious.

\begin{theorem}
	The sets defined above form an $(E,w)$-branching system.
\end{theorem}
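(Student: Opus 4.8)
The plan is to verify the three defining conditions of an $(E,w)$-branching system directly for the explicit data $X=\bigcup_i D_{v^i}$, $\{R_{e_i}\}$, $\{D_{v^i}\}$, $\{g_{e_i}\}$ just constructed. Because every set in sight is a half-open real interval, each condition collapses to an elementary statement about subdividing a unit interval into equal pieces, and the only real content is index bookkeeping. Condition (1) is immediate: $D_{v^i}=[i-1,i)$, so distinct $i$ give disjoint half-open intervals, and $\bigcup_i D_{v^i}=X$ by the very definition of $X$.

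Next I would treat condition (2). Fix $v^i\in E^0$ and a tag $1\le j\le w(v^i)$. The family required by the definition, $\{R_{e_j}\mid e\in s^{-1}(v^i),\,w(e)\ge j\}$, is indexed exactly by the finite set $X^i_j$, which is nonempty since $j\le w(v^i)=\max\{w(e)\mid e\in s^{-1}(v^i)\}$ guarantees an edge of weight at least $j$. Writing $N=|X^i_j|\ge 1$ and $X^i_j=\{e^{i,1},\dots,e^{i,N}\}$, the intervals $R_{e^{i,k}_j}=[\,i-1+\tfrac{k-1}{N},\,i-1+\tfrac{k}{N}\,)$ are consecutive and pairwise disjoint with union $[i-1,i)=D_{v^i}$, so they partition $D_{v^i}$. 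Here I would flag the one subtle point: a fixed edge $e$ belongs to $X^{s(e)}_j$ precisely when $w(e)\ge j$, so $R_{e_j}$ is defined exactly on the legal range $1\le j\le w(e)$, and for different tags the same edge contributes to partitions of $D_{s(e)}$ into different numbers of pieces; since $R_{e_j}$ is a distinct set for each pair $(e,j)$, there is no clash.

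Finally, for condition (3) I would fix $e\in E^1$ with $v^i=r(e)$ and observe that the intervals $D_{e_j}=[\,i-1+\tfrac{j-1}{w(e)},\,i-1+\tfrac{j}{w(e)}\,)$, $1\le j\le w(e)$, are again consecutive and disjoint with union $D_{r(e)}$, hence partition it. It then remains only to exhibit bijections $g_{e_j}\colon R_{e_j}\to D_{e_j}$ realising $D_{e_j}=g_{e_j}(R_{e_j})$: as $R_{e_j}$ and $D_{e_j}$ are half-open intervals of positive length, the affine map carrying $[a,b)=R_{e_j}$ onto $[c,d)=D_{e_j}$ is a bijection, and viewing it as a map into $D_{r(e)}$ (using $D_{e_j}\subseteq D_{r(e)}$) gives the injection demanded by the definition. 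I expect no genuine obstacle: the statement is essentially a tautology once these three interval computations are written out, the only care needed being the nonemptiness that guarantees the affine bijections exist (here $|X^{s(e)}_j|\ge 1$ and $w(e)\ge 1$) and the tracking of the tag index $j$ in conditions (2) and (3). Assembling the three verifications completes the proof.
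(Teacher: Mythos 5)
Your verification is correct and is exactly the routine check the paper has in mind: the paper simply declares the theorem ``now obvious'' after the construction, and your write-up supplies the three interval computations (disjointness and union for conditions (1)--(3), plus the affine bijections $g_{e_j}$) that make that claim explicit. No gaps; the nonemptiness observations you flag are the only points requiring any care, and you handle them correctly.
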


\subsection{Branching systems and representation graphs}

Now we will show that every representation graph defines a branching system is a natural
way and representations induced by these structures are isomorphic in a reasonable sense.

Let $(E,w)$ be a weighted graph and $(F, \phi)$ a representation graph for $(E,w)$. 
Put $X = F^0$ and $D_v = (\phi^0)^{-1}(v)$ for each $v \in E^0$. Clearly,
$\{ D_v \mid v \in E^0\}$ is a partition of $X$. 

Fix $v \in E^0$ and a tag $1\leq i\leq w(v)$. For each $e \in s^{-1}(v)$ such that $i \le w(e)$ set
\[R_{e_i} = \{ u \in D_v \mid \text{ there exists } f \in s_F^{-1}(u) : \phi^1(f)=e_i \}.\]
The Condition (1) of Definition \ref{defwp} of a representation 
graph translates in this setting as follows:
each vertex $u$ in $D_v$ is contained in one and only one $R_{e_i}$, where
$e$ ranges over all edges of weight at least $i$ emitted by $v$. In other words,
the family $\{ R_{e_i} \mid e \in s^{-1}(v), w(e) \geq i \}$ forms a
partition for $D_v$. 

Now fix a vertex $v \in F^0$ and an edge $e \in r^{-1}(v)$. For each $1 \leq i \leq w(v)$
set 
\[D_{e_i} = \{ u \in D_v \mid \text{ there exists } f \in r_F^{-1}(u) \colon
\phi^1(f)=e_i\}.\]
The Condition (2) of Definition \ref{defwp} of a representation 
graph guarantees that the family of sets 
$\{ D_{e_i} \mid 1 \leq i \leq w(E) \}$ forms a partition for $D_v$.

Finally, fix some $e_i \in \hat{E}^1$. We need to define a bijection
$g_{e_i} \colon R_{e_i} \rightarrow D_{e_i}$. For each $u \in R_{e_i}$
there exists precisely one edge $f \in s^{-1}(u) \cap (\phi^1)^{-1}(e_i)$.
Set $g_{e_i}(u) = r(f)$. 

\begin{theorem}
	The quadruple $X = (X, \{ R_{e_i} \}, \{ D_{e_i} \}, \{ g_{e_i} \})$ defined above
	is a $(E,w)$-branching system. The right $L_K(E,w)$-module induced by $X$
	on $M_0$ is isomorphic to the module defined by $(F,\phi)$.
	\begin{proof}
		It is clear that $X$ is a branching system. Let $\pi$ denote the 
		representation on $K^{F^0}$ induced by the representation graph.
		The module $M_0$ has a $K$-basis of delta-functions:
		\[
			\delta_x \colon y \mapsto \begin{cases}
				1 & \text{ if } y=x \\
				0 & \text{ otherwise},
			\end{cases}
		\]
		which correspond bijectively to elements of $X$. For the sake of simplicity,
		we will identify x with $\delta_x$. In order to show that
		the module structure induced on $M_0$ by $X$ is isomorphic
		to the module structure defined by $\pi$ it is enough to show that
		\begin{align*}
			\pi(v)(x) &= x.v \\
			\pi(e_i)(x) &= x.e_i \text{ and }\\
			\pi(e^*_i)(x) &= x.e_i^*,
		\end{align*}
		for each $x \in X = F^0$, each $v \in E^0$ and each $e_i \in \hat{E}^1$.
		
		By definition of $\pi$, $\pi(v)(x) = x$ iff $x \in D_v$ and $0$ otherwise.
		On the other hand, $x.v = \chi_{D_v} \cdot \delta_x = \delta_x = x$ iff $x \in D_v$
		and 0 otherwise.
		
		Further, let $e_i \in \hat{E}^1$. Then $\pi(e_i)(x) = r(f)$, if there exists a
		(unique!) edge $f$ in $s^{-1}(x)$ such that $\phi^1(f) = e_i$, and 0 otherwise.
		Similarly,
		$x.e_i = \chi_{D_{e_i}} \cdot (\delta_x \circ g^{-1}_{e_i})$.
		Note that the restriction of $\delta_x \circ g^{-1}_{e_i}$ to $D_{e_i}$ coincides
		with that of $\delta_{g_{e_i}(x)}$. If $x \in R_{e_i}$ then
		there exits a (again, unique) edge $f$ mapped to $e_i$ under $\phi^1$
		and $g_{e_i}$ takes $x$ to $r(f) \in D_{e_i}$ Therefore $x.e_i = r(f)$.
		If an edge $f$ as above does not exist, then $x$ emits a \textit{different}
		edge with tag $i$ and ($x \notin R_{e_i}$). Therefore the supports of
		$\chi_{D_{e_i}}$ and $\delta_x$ are disjoint and $x.e_i = 0$.
		
		In the same manner one checks that $\pi(e_i^*)(x)=x.e_i^*$ for each $x \in X$.
	\end{proof}
\end{theorem}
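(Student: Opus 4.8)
The plan is to split the statement into its two assertions and dispatch them separately: first that the quadruple $X=(X,\{R_{e_i}\},\{D_{e_i}\},\{g_{e_i}\})$ satisfies the three axioms of a branching system, and then that the $K$-linear bijection sending a vertex $x\in F^0$ to the delta-function $\delta_x\in M_0$ is an isomorphism of right $L_K(E)$-modules. The first assertion is essentially a dictionary translation of the defining conditions of a representation graph (Definition~\ref{defwp}) into the language of partitions, so almost all the genuine content sits in the module comparison.

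For the branching-system axioms I would argue as follows. The family $\{D_v\mid v\in E^0\}$ partitions $X=F^0$ simply because $\phi^0\colon F^0\to E^0$ is a single-valued, everywhere-defined map, so its fibres partition the source. Fixing $v\in E^0$ and a tag $1\le i\le w(v)$, Condition~(1) of Definition~\ref{defwp} says that every $u\in D_v$ emits exactly one edge $f$ with $\tg(\phi^1(f))=i$; assigning to $u$ the structure edge $\mot(\phi^1(f))$ of that edge shows that the sets $\{R_{e_i}\mid e\in s^{-1}(v),\,w(e)\ge i\}$ are pairwise disjoint and cover $D_v$. Dually, fixing $e\in E^1$ and $u\in D_{r(e)}$, Condition~(2) of Definition~\ref{defwp} produces exactly one edge $f\in r_F^{-1}(u)$ with $\mot(\phi^1(f))=e$, and the tag of this $f$ is the unique index $j$ with $u\in D_{e_j}$; hence $\{D_{e_j}\mid 1\le j\le w(e)\}$ partitions $D_{r(e)}$. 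This establishes the three required partition conditions.

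For the module isomorphism I would identify $x\in X=F^0$ with $\delta_x$, so that the underlying $K$-vector spaces of $V_F$ and $M_0$ are matched basis-to-basis; by the universal property of $L_K(E)$ it then suffices to check that this identification intertwines the two actions on basis elements for each generator $v$, $e_i$ and $e_i^*$. For $v$ both actions return $x$ when $x\in D_v$ and $0$ otherwise. For $e_i$ I would unwind $x.e_i=\chi_{D_{e_i}}\cdot(\delta_x\circ g_{e_i}^{-1})$: when $x\in R_{e_i}$ this collapses to $\delta_{g_{e_i}(x)}$, and since $g_{e_i}(x)=r_F(f)$ for the unique $f\in s_F^{-1}(x)$ with $\phi^1(f)=e_i$, this equals $\pi(e_i)(x)=r_F(f)$; when $x\notin R_{e_i}$ the partition property forces the tag-$i$ edge emitted by $x$ to carry a different structure edge, so no $f$ with $\phi^1(f)=e_i$ exists and both sides vanish. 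The ghost case $e_i^*$ is the mirror image: writing $x.e_i^*=\chi_{R_{e_i}}\cdot(\delta_x\circ g_{e_i})$, one checks that $x\in D_{e_i}$ exactly when $x$ receives an edge $f$ with $\phi^1(f)=e_i$, in which case $g_{e_i}^{-1}(x)=s_F(f)$ and both actions return $s_F(f)$, while otherwise both are $0$.

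The only delicate point, and the step I would be most careful about, is the $e_i^*$ computation: one must correctly invert $g_{e_i}$ and confirm that the non-membership condition $x\notin D_{e_i}$ on the branching-system side coincides exactly with the absence of an incoming edge $f\in r_F^{-1}(x)$ with $\phi^1(f)=e_i$ on the representation-graph side. This matching of the vanishing cases is precisely where the uniqueness clauses in Definition~\ref{defwp}(1)--(2) are used, but once the dictionary of the previous paragraph is in place the remaining verification is routine.
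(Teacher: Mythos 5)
Your proposal is correct and follows essentially the same route as the paper: the partition axioms are read off directly from Conditions (1) and (2) of Definition~\ref{defwp} (the paper establishes these in the paragraphs preceding the theorem and the proof itself dismisses them as clear), and the module isomorphism is verified by identifying each vertex $x\in F^0$ with $\delta_x$ and checking that the two actions agree on basis elements for each generator $v$, $e_i$, $e_i^*$, with the same case analysis on whether $x$ lies in $R_{e_i}$ (resp.\ $D_{e_i}$). No gaps.
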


We will show next that given a $(E,w)$-branching system $X$ it is possible do describe a 
representation graph that defines the same representation as $X$ induces on $M_0$.

\begin{theorem}
	\label{t.easyisrg}
	Let $(E,w)$ be a weighted graph and $V$ be a vector space over $K$ 
	equipped with the structure of a right $L_K(E)$-module.
	Suppose there exists a $K$-basis $B$ of $V$ such that 
	\begin{enumerate}[\upshape(i)]
		\item for each $a \in B$ and each $e \in \hat{E}_d^1$ holds $a.e \in B \cup \{ 0\}$;
		\smallskip
		\item for each $a \in B$ and each $v \in E^0$ holds $a.v \in \{a, 0\}$;
		\smallskip
		\item for each $a \in B$ $a. L_K(E) \neq \{ 0\}$;
		\smallskip
		\item either $\operatorname{char}(K) = 0$ or for each $a \in B$ holds:
			$a.e_i^*f_i = 0$ for each $e \neq f$ and $a.e_ie_j^* = 0$ for each $i \neq j$.
	\end{enumerate}
	Then there exists a representation graph that defines an $L_K(E)$-module isomorphic to $V$.
	\begin{proof}
		First, we will show that each $a \in B$ is fixed by precisely 
		one $v \in E^0$ and annihilated by the rest. We will refer to this
		as \textit{v-property}. By the assumption (iii),
		there exists an element $x \in L_K(E)$ such that 
		$a.x \neq 0$. Without loss of generality we assume that
		$x$ is a standard generator of $L_K(E)$, 
		i.e $x \in E^0 \cup \hat{E}_d^1$. Suppose $x$ is an edge.
		By the property (2) in Definition \ref{def3}
		one has $0 \neq a.x = a.s(x)x$. Therefore $a.s(x) \neq 0$. Similarly, if
		$x$ is a ghost edge then $0 \neq a.x = a.r(x)x$. Therefore $a.r(x) \neq 0$.
		Summing up, $a.v \neq 0$ for some $v \in E^0$. By assumption (i), $a.v=a$.
		Suppose $u \neq v \in E^0$. By the property (1) in Definition \ref{def3},
		$a.u=a.vu=a.0=0$. Therefore each vertex except for $v$ annihilates $a$.
		With this in mind set $F^0 = B$ and for each $a \in B$ set $\phi^0(a) = v$,
		where $v$ is the only vertex in $E^0$ such that $a.v\neq 0$.
		
		Note that we have proven a useful fact on the way: 
		if $a.e = b$ for some edge $e$ and $a,b \in B$ then $a.s(e) =a$ and $b.r(e) = a.er(e) = a.e=b$. 
		Similarly, if $c.e^*=d$ for some ghost edge $e^*$ and $c,d \in B$ then
		$c.r(e) = c$ and $d.s(e)=d$. We shall refer to this as the \textit{sr-property}.

		Next, we construct $F^1$ and $\phi^1$. Start with $F^1=\emptyset$. For each
		$a \in B$ and $e_i \in \hat{E}^1$ such that $a.e_i \neq 0$ add an edge
		$g_{a,e_i}$ from $a$ to $a.e_i$ (cf. assumption (i)) to $F^1$ and set
		$\phi^1(g_{a,e_i}) = e_i$. For convenience, also set $\tg(g_{a,e_i})=i$,
		$\mot(g_{a,e_i}) = e$ and $w(a) = w(\phi^0(a))$. 
		Therefore, $\tg(g) = \tg(\phi^1(g))$ and $\mot(g)=\mot(\phi^1(g))$ for each $g \in F^1$. 
		In order to show that $(F,\phi)$ is a representation graph for $(E,w)$
		it is enough to show that each $a \in F^0$ emits precisely one edge
		with tag $i$ for each $1 \leq i \le w(a)$ and receives precisely one edge
		with the structured edge $e$ for each $e \in r_E^{-1}(\phi^0(a))$.
		
		We will show existence of such edges first. Suppose there is a vertex $a$
		with $\phi^0(a)=v$ and an index $1 \leq i \leq w(v)$ such that
		for each $e \in s^{-1}_E(v)$ we have $a.e_i = 0$. But this contradicts the property
		(4) in Definition \ref{def3}. Indeed, in this case
		\begin{align*}
			a=a.v = a.\left( \sum_{e \in s^{-1}_E(v)} e_i e_i^*\right)
			= \sum_{e \in s^{-1}_E(v)} (a.e_i).e_i^* = 0,
		\end{align*}
		where the first equality is by the v-property. Therefore $a$ emits an edge
		with tag $i$ for each $1 \le i \le w(v)$ as required. The symmetric property,
		namely that $a$ receives an edge $f$ such that $\mot(f)=e$
		for each $e \in r_E^{-1}(a)$, is shown exactly in the same way using 
		the property (3) in Definition \ref{def3}.
		
		Next we will show that the first alternative in assumption (iv) yields the
		second one, which in fact tell us that for each relation of form
		(3) or (4) in Definition \ref{def3} with zero right-hand side already each summand
		acts like zero on each $a \in B$. Indeed, pick any such relation, for example
		$\sum_{e \in s_E^{-1} (v)}\limits e_i e_j^* = 0$, where $i \neq j$. Choose any $a \in B$.
		Then $0 = a.0 = \sum_{e \in s_E^{-1}( v)} a.e_i e_j^*$. Note that \textit{each}
		summand on the right-hand side of the last expression is either zero or a basis
		element. When $\operatorname{char}(K)=0$ a sum of basis elements is never zero,
		thus all summands must be zero. The relation (3) in Definition \ref{def3}
		is treated similarly.
		
		Finally, we will show that each $a \in F^0$ cannot emit two edges with the same
		tag or receive two edges with the same structured edge. Fix some $a \in B$
		and let $v = \phi^0(a)$. By the property (4) in Definition \ref{def3},
		\begin{equation}
			\label{eq:t.easyisrg:1}
			a = a.v = a.\left( \sum_{g \in s_E^{-1}(v)} g_i g_i^* \right)
				= \sum_{g \in s_E^{-1}(v)} a.g_i g_i^*
		\end{equation}
		The right-hand side of \eqref{eq:t.easyisrg:1} is a sum of basis elements and zeros
		and thus must contain the basis element $a$. Thus, $a = a.g_i g_i^*$ for some $g$.
		By the assumption (iv) we get 
		\[
			a.f_i = a.g_i g_i^* f_i = (a.g_i).(g_i^* f_i) = 0
		\]
		for each $f \neq g$. Therefore each vertex in $F^0$ emits precisely one edge
		with each tag.
		
		Similarly, for a fixed $e \in r_E^{-1}(v)$ by the property (iii) in Definition \ref{def3}
		we have
		\begin{equation}
			\label{eq:t.easyisrg:2}
			a = a.v = a.\left(\sum_{1 \leq i \leq w(e)} e_i^* e_i \right) = 
			\sum_{1 \leq i \leq w(e)} a.e_i^* e_i.
		\end{equation}
		As in the previous case, the right-hand side of \eqref{eq:t.easyisrg:2}
		is a sum of basis elements and zeros and thus contains the term $a$. That is
		$a = a.e_i^* e_i$ for some $1 \leq i \leq w(e)$. Then by the assumption (iv),
		\[
			a.e_j^* = a.e_i^*e_ie_j^* = (a.e_i^*).(e_ie_j^*)=0.
		\] Summing up, $a.e_i^* \neq 0$ for some fixed $i$ and $a.e_j^* = 0$ for any $j \neq i$.
		Now suppose $b.e_j = a$ for some $b \in B$. Then $(b.e_j).e_i^* = a.e_i^* \neq 0$.
		On the other hand if $j \neq i$ then by assumption (iv) one has
		$b.(e_j e_i^*) = 0$. Therefore $j$ must be equal to $i$. Thus, each vertex in $F^0$
		cannot receive two edges with the same structured edge.
		
		Summing up $(F,\phi)$ is a representation graph for $(E,w)$. It is clear from the construction
		of $(F,\phi)$ that the $L_K(E)$-module defined by $(F,\phi)$ is isomorphic to the $L_K(E)$-module $V$.
	\end{proof}
\end{theorem}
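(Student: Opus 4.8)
The plan is to realize $V$ as some $V_F$ by building the representation graph literally on the given basis: take $F^0 = B$, and let the module action dictate $\phi$ and the edge set of $F$. The verification that $(F,\phi)$ satisfies Definition~\ref{defwp} will be driven entirely by the Leavitt relations (1)--(4) of Definition~\ref{def3}.

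First I would determine the vertex map $\phi^0$. For $a \in B$, hypothesis (iii) gives a standard generator $x$ with $a.x \neq 0$; relation (2) then shows $a$ is fixed by $s(x)$ or $r(x)$ according as $x$ is a real or ghost edge, and by (ii) this forces $a.v = a$ for some vertex $v$, while relation (1), namely $uv = \delta_{uv}u$, shows every other vertex annihilates $a$. Thus each basis element is fixed by a unique vertex, and I set $\phi^0(a) = v$. Next, for each $a \in B$ and each $e_i \in \hat E^1$ with $a.e_i \neq 0$ (so $a.e_i \in B$ by (i)), I adjoin an edge from $a$ to $a.e_i$ with $\phi^1$-image $e_i$, tag $\tg = i$ and structure edge $\mot = e$. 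A one-line check with relation (2) confirms source/range compatibility, so $\phi$ is a graph homomorphism.

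The heart of the argument is verifying the two representation-graph axioms. For existence, I would apply relation (4) at $v = \phi^0(a)$ to obtain $a = a.v = \sum_{e \in s^{-1}(v)} (a.e_i).e_i^*$, so if no tag-$i$ edge were emitted the right-hand side would vanish, forcing $a = 0$; hence a tag-$i$ edge exists for each $1 \le i \le w(v)$, and symmetrically relation (3) produces an incoming edge for each structure edge $e \in r^{-1}(v)$. For uniqueness I rely on hypothesis (iv). In positive characteristic it is imposed outright; in characteristic $0$ I would first derive it, by applying an identity such as $\sum_{e \in s^{-1}(v)} e_i e_j^* = 0$ (with $i \neq j$) to $a$: each summand lies in $B \cup \{0\}$ by (i), and a sum of basis vectors with nonnegative integer coefficients can vanish only if every summand is zero, which is exactly (iv). Granting (iv), relation (4) rewrites as $a = \sum_{g \in s^{-1}(v)} a.g_i g_i^*$, a sum of basis vectors and zeros equal to $a$, so some $g$ satisfies $a.g_i g_i^* = a$; then for any $f \neq g$ one gets $a.f_i = a.g_i g_i^* f_i = (a.g_i).(g_i^* f_i) = 0$ by (iv), ruling out a second tag-$i$ edge. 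Uniqueness of the incoming edge with a prescribed structure edge follows identically from relation (3).

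Having checked both conditions, $(F,\phi)$ is a representation graph for $(E,w)$, and by construction the endomorphisms defining $V_F$ satisfy $\pi(v)(a) = a.v$, $\pi(e_i)(a) = a.e_i$ and $\pi(e_i^*)(a) = a.e_i^*$ on the basis $B = F^0$, giving an $L_K(E)$-module isomorphism $V_F \cong V$. I expect the main obstacle to be the characteristic-$0$ reduction to hypothesis (iv): one must argue that a single scalar Leavitt relation forces each individual monomial summand to act as zero on every basis vector, and it is precisely the linear independence of the basis (the impossibility of a nontrivial positive-coefficient relation among its elements) that makes this work, and for which the explicit hypothesis (iv) substitutes in positive characteristic.
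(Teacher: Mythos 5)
Your proposal is correct and follows essentially the same route as the paper's own proof: the same $v$-property argument via hypothesis (iii) and relations (1)--(2), the same edge construction $a \to a.e_i$, existence of tagged/structured edges via relations (4) and (3), the characteristic-zero derivation of hypothesis (iv) from the vanishing Leavitt relations together with linear independence of $B$, and uniqueness of emitted and received edges via (iv). No substantive differences to report.
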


Note that the assumption on $\operatorname{char}(K)$ in Theorem \ref{t.easyisrg} may be weakened.
It is clear from the proof of the theorem that it suffices that $\operatorname{char}(K)$ exceeds the number of terms of each relation of form (3) or (4) in Definition \ref{def3}. Moreover, a slight
modification of the proof works when $\operatorname{char}(K)$
exceeds each relation of \textit{one} of the forms (iii) or (iv). Thus, assumption (iv)
of Theorem \ref{t.easyisrg} is fulfilled for any unweighted graph $E$ or, for example,
any weighted chain. 

It is clear that any representation of $L_K(E)$ on $M_0$ induced by a branching system
satisfies the assumptions (i)---(iii) of Theorem \ref{t.easyisrg} with respect to the basis of
delta-functions. Further, the proof of Theorem \ref{t.br1} shows in particular that
property (iv) is also satisfied for such representations. Further, the representation 
induced by a branching system on $M_0$ is a subrepresentation of the one induced on $M$.
Summing up, we get the following corollary.

\begin{corollary}
	Any representation of a weighted Leavitt path algebra induced by a branching system
	$X$ on the module $M$ of functions $X \rightarrow K$ contains a subrepresentation
	isomorphic to a representation given by a representation graph. This representation is 
	precisely the one induced by the same branching system on the subspace $M_0$ of functions 
	with finite support.
\end{corollary}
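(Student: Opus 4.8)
The plan is to apply Theorem~\ref{t.easyisrg} to the submodule $M_0 \subseteq M$, taking as basis $B$ the delta-functions $\{\delta_x \mid x \in X\}$. First I would check that $M_0$ is genuinely a subrepresentation of $M$. By the formulas~\eqref{eq:brs:action}, together with the fact that each $g_{e_i}$ is a bijection $R_{e_i} \to D_{e_i}$, each standard generator $v, e_i, e_i^*$ sends a delta-function to a delta-function or to $0$; since the action is $K$-linear, the span $M_0$ of the delta-functions is closed under $L_K(E)$ and finite support is preserved. Thus $M_0$ is an $L_K(E)$-submodule of $M$.

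Next I would verify hypotheses (i)--(iv) of Theorem~\ref{t.easyisrg} for $B$. Condition (i) is precisely the observation just made: $\delta_x.e_i = \delta_{g_{e_i}(x)}$ when $x \in R_{e_i}$ and $0$ otherwise, and dually for $e_i^*$ using $D_{e_i}$, so $\delta_x.e \in B \cup \{0\}$ for every $e \in \hat{E}_d^1$. For (ii), since $\{D_v\}_{v \in E^0}$ partitions $X$, each $x$ lies in exactly one $D_v$; hence $\delta_x.v = \chi_{D_v}\cdot\delta_x = \delta_x$ for that $v$ and $\delta_x.u = 0$ for $u \neq v$, so $\delta_x.v \in \{\delta_x, 0\}$. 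Condition (iii) is then immediate: for the unique $v$ with $x \in D_v$ one has $\delta_x.v = \delta_x \neq 0$, so $\delta_x.L_K(E) \neq \{0\}$.

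The only substantive point, and the step I expect to be the main obstacle, is condition (iv). I would extract it directly from the computation inside the proof of Theorem~\ref{t.br1}: there it is shown that for $e \neq f$ the sets $g_{f_i}^{-1}(D_{f_i}) = R_{f_i}$ and $R_{e_i}$ are disjoint, whence $\delta_x.e_i^* f_i = 0$ for every $x$; and that for $i \neq j$ the sets $D_{e_i}$ and $D_{e_j}$ are disjoint, whence $\delta_x.e_i e_j^* = 0$ for every $x$. Crucially this vanishing is \emph{termwise} on the delta-function basis and is independent of $\operatorname{char}(K)$, so the second alternative of hypothesis (iv) holds unconditionally. The subtlety to flag is exactly that one must invoke the disjointness axioms of the branching system rather than a hypothesis on the characteristic; this is what makes the corollary apply to arbitrary $K$.

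With (i)--(iv) established, Theorem~\ref{t.easyisrg} produces a representation graph $(F,\phi)$ together with an $L_K(E)$-module isomorphism $V_F \cong M_0$. Combining this with the fact, verified in the first step, that $M_0$ is a subrepresentation of $M$ yields precisely the assertion of the corollary: the representation induced by $X$ on $M$ contains the subrepresentation $M_0$, which is isomorphic to the module $V_F$ of a representation graph.
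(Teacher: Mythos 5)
Your proposal is correct and follows essentially the same route as the paper: the authors likewise apply Theorem~\ref{t.easyisrg} to $M_0$ with the basis of delta-functions, note that (i)--(iii) are immediate from the formulas \eqref{eq:brs:action}, and obtain (iv) unconditionally from the termwise vanishing established in the proof of Theorem~\ref{t.br1} via the disjointness axioms of the branching system. Your explicit flagging of why the characteristic hypothesis is not needed matches the paper's (more tersely stated) reasoning exactly.
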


It is unclear to the authors, if a basis satisfying the assumptions of Theorem \ref{t.easyisrg}
can be chosen for the \textit{whole} $L_K(E)$-module $M$ defined by a branching system.

\subsection{Exceptional characteristic}

Here we provide an example that shows that the assumption (iv) in Theorem \ref{t.easyisrg} is
nonredundant. We will construct a representation that satisfies the assumptions (i)---(iii)
thereof, but not (iv) and thus is not defined by any representation graph or a branching system. 
As mentioned before, such an example does not exist for a graph of weight 1.

Let $K = \mathbb{F}_2$. Consider the weighted graph 
\[
	E = \xymatrix{ v\ar@(dr,ur)_{\mathtt{f_1,f_2}}\ar@(dl,ul)^{\mathtt{e_1, e_2}}}.
\]
Introduce the structure of a right $L_K(E)$-module on $K^1$ by specifying the
action of standard generators:
\begin{align*}
	1.e_2 = 1.f_1^* = 0, \quad
	1.v = 1.e_1 = 1.e_1^* = 1.e_2^* = 1.f_1 = 1.f_2 = 1.f_2^* = 1.
\end{align*}
The easiest way to check that this is a well defined 
representation is to check all the relations in Definition \ref{def3}. The relations 
(i) and (ii) are trivially satisfied and the relations (iii) and (iv) can be checked with
a straightforward computation:
\begin{align*}
	1.(e_1^* e_1 + e_2^* e_2) &= 1+0 = 1, &
	1.(e_1 e_1^* + f_1 f_1^*) &= 1+0 = 1, \\
	1.(f_1^* f_1 + f_2^* f_2) &= 0+1 = 1, &
	1.(e_2 e_2^* + f_2 f_2^*) &= 0+1 = 1, \\
	1.(e_1^* f_1 + e_2^* f_2) &= 1+1 = 0, &
	1.(e_1 e_2^* + f_1 f_2^*) &= 1+1 = 0, \\
	1.(f_1^* e_1 + f_2^* e_2) &= 0+0 = 0, &
	1.(e_2 e_1^* + f_2 f_1^*) &= 0+0 = 0.
\end{align*}

\appendix   

\section{Representation graphs for quivers with relations} \label{appendaa}

In this Appendix we give a general construction of representation graphs for a directed graph with relations (Definition~\ref{def0.1}) which in turn gives rise to (simple) modules for path algebras with relations. The case of weighted graphs and thus weighted Leavitt path algebras falls into this general construction. However, the trade off is that this general construction does not give the elegant and easy to use representation graphs that we obtained in the case of (weighted) graphs.

Let $E$ be a directed graph.  Consider $E$ as a category whose objects are the vertices and for two vertices $v,w$, the morphism set $\Hom_E(v,w)$ are the paths from $v$ to $w$. A covariant functor from $E$ to $\vc K$, the category of vector spaces over a field $K$, is called a \emph{representation} of $E$. By $\Rep(E)$ we denote the functor category whose objects are the representations of $E$ and morphisms are the natural transformations. A \emph{relation}  in $E$ is a formal sum, $\sum_i k_i p_i$, where $k_i\in K\backslash \{0\}$ and $p_i \in \Hom_E(v,w)$, for fixed vertices $v,w$. Let $r$ be a set of relations in $E$. Then $\Rep(E,r)$ is a full subcategory of $\Rep(E)$ which satisfies the relations in $r$, i.e., representations $\rho: E \rightarrow \vc K$ such that 
$\sum_i k_i \rho(p_i)=0$, where $\sum_i k_i p_i$ is a relation in $r$. Let $\langle r \rangle$ be the two sided ideal of the path algebra $KE$ generated by the set of relations $r$. Set  \begin{equation}\label{genehfghfyf}
A_K(E,r):= KE/\langle r \rangle.  
\end{equation} 
Then, by a result of Green~\cite[Theorem~1.1]{green}, for a finite graph $E$, there exists an exact equivalent functor 
\begin{equation*}\label{hng32}
 \Modd \, A_K(E,r) \longrightarrow \Rep(E,r),
\end{equation*}
where $\Modd \, A_K(E,r)$ is the category of right $A_K(E,r)$-modules. 

We are in a position to define the representation graphs for a given graph in this setting.

\begin{definition} Let $E$ be a directed graph. 
A {\it representation graph} for $E$ is a pair $(F,\phi)$, where $F$ is a directed graph and $\phi:F\rightarrow E$ is a homomorphism such that for any $e\in E^1$ and $u\in F^0$ there is at most one $f\in F^1$ such that $\phi(f)=e$ and $s(f)=u$.
\end{definition}

Hence a representation graph for $E$ is an immersion of the graph $E$. The following lemma is easy to check.
\begin{lemma}\label{lem0.1} Let $E$ be a graph and $\phi:F\rightarrow E$ a representation for $E$. 
If $q\neq q'\in \Path(F)$ and $\phi(q)=\phi(q')$, then $s(q)\neq s(q')$.
\end{lemma}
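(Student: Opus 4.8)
The plan is to establish the contrapositive: if $q,q'\in\Path(F)$ satisfy $s(q)=s(q')$ and $\phi(q)=\phi(q')$, then $q=q'$. This is exactly the assertion that $\phi$, regarded as a functor between the path categories of $F$ and $E$, is star injective, and it is the unweighted counterpart of Lemma~\ref{lemwell}, so I would mirror that proof by induction on path length. The first observation is that since $\phi$ is a homomorphism of directed graphs, it sends each edge to an edge and each vertex to a vertex; in particular it preserves the length of paths. Hence the hypothesis $\phi(q)=\phi(q')$ forces $q$ and $q'$ to have the same length $n$, and I would induct on $n$.

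For the base case $n=0$, both $q$ and $q'$ are trivial paths (vertices), so $q=s(q)=s(q')=q'$. For the inductive step, write $q=f_1\cdots f_n$ and $q'=f_1'\cdots f_n'$ with $f_i,f_i'\in F^1$. Since $\phi(q)=\phi(f_1)\cdots\phi(f_n)$ and $\phi(q')=\phi(f_1')\cdots\phi(f_n')$ agree as paths, comparing them letter by letter gives $\phi(f_i)=\phi(f_i')$ for each $i$; set $e:=\phi(f_1)=\phi(f_1')$. Because $s(f_1)=s(q)=s(q')=s(f_1')=:u$, the defining immersion property of a representation graph for $E$ — namely that at most one edge emitted by $u$ maps to $e$ under $\phi$ — yields $f_1=f_1'$. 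Consequently $r(f_1)=r(f_1')$, so the subpaths $f_2\cdots f_n$ and $f_2'\cdots f_n'$ share the same source and have equal image under $\phi$; by the inductive hypothesis they coincide, whence $q=q'$.

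There is essentially no serious obstacle here: the argument is a routine induction, and the only points requiring care are the length-preservation remark (which licenses the letter-by-letter comparison) and the bookkeeping ensuring the common source propagates along the paths. The crux is the single application of the immersion condition in the inductive step, exactly as Conditions~(1) and~(2) of Definition~\ref{defwp} were invoked in Lemma~\ref{lemwell}; here, however, only the unweighted clause ``at most one edge out of $u$ lying over $e$'' is needed, which makes the present statement slightly weaker and its proof correspondingly shorter.
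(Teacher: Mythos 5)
Your argument is correct: the paper states this lemma without proof (``easy to check''), and your induction on path length via the contrapositive, using length-preservation to justify the letter-by-letter comparison and then the immersion condition once per step, is exactly the routine argument the authors intend, mirroring the source-based half of Lemma~\ref{lemwell}. Nothing is missing.
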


Let $(F,\phi)$ be a representation graph for $E$. Define a representation $\rho=\rho_{(F,\phi)}$ of $E$ by 
\begin{equation}\label{ghghgdfd234}
\rho(v):=\sum_{u\in\phi^{-1}(v)}Ku
\end{equation}
for any $v\in \Ob(E)=E^0$, and 
\[\rho(p)(u):=\begin{cases}r(q),&\text{ if }\exists q\in \Path(F): \phi(q)=p\land s(q)=u,\\0,&\text{ otherwise},\end{cases}\]
for any $p\in \Hom_E(v,w)$ and $u\in\phi^{-1}(v)$. Note that $\rho(p)$ is well-defined by Lemma \ref{lem0.1}.

Clearly if $\rho$ is a representation of $E$ defined by a representation graph $(F,\phi)$, then there are linear bases $B_v\subseteq \rho(v)~(v\in \Ob(E))$ such that \eqref{stumih} and \eqref{stumih2} below are satisfied.

\begin{align}
&B_v\cap B_w=\emptyset\text{ for any } v\neq w\in \Ob(E).\label{stumih}
\\
& \text{For any } p\in \Hom_E(v,w) \text{ and } x\in B_{v} \text{ either }\rho(p)(x)=0 \text{ or } \rho(p)(x)\in \bigcup\limits_{u\in \Ob(E)}B_{u}.
\label{stumih2}
\end{align}
It follows from Theorem \ref{thm0.1} below that the representations $\rho_{(F,\phi)}$ cover precisely the representations $\rho$ of $E$ for which there are linear bases $B_v\subseteq \rho(v)~(v\in \Ob(E))$ such that (\ref{stumih}) and (\ref{stumih2}) are satisfied.

\begin{theorem}\label{thm0.1}
Let $\rho$ be a representation of $E$ such that there are linear bases $B_v\subseteq \rho(v)~(v\in \Ob(E))$ for which (\ref{stumih}) and (\ref{stumih2}) are satisfied. Then there is a representation graph $(F,\phi)$ for $E$ such that $\rho$ is isomorphic to $\rho_{(F,\phi)}$.
\end{theorem}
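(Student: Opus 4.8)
The plan is to build $(F,\phi)$ directly out of the distinguished bases $B_v$ and then exhibit a natural isomorphism $\rho\cong\rho_{(F,\phi)}$ that is the identity on basis vectors.

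\emph{Construction.} First I would set $F^0=\bigsqcup_{v\in\Ob(E)}B_v$, which is a genuine disjoint union by \eqref{stumih}, and define $\phi^0$ by sending every $x\in B_v$ to $v$. For the edges, I would proceed as follows: for each $e\in E^1$ with $s(e)=v$, $r(e)=w$, and each $x\in B_v$ for which $\rho(e)(x)\neq 0$, condition \eqref{stumih2} guarantees that $\rho(e)(x)$ is a basis vector $y$, and since $\rho(e)(x)\in\rho(w)$ and the $B_u$ are disjoint, in fact $y\in B_w$; I then add a single edge $f_{x,e}$ to $F^1$ with $s_F(f_{x,e})=x$, $r_F(f_{x,e})=y$, and put $\phi^1(f_{x,e})=e$. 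Because exactly one edge is created for each pair $(e,x)$ with $\rho(e)(x)\neq 0$ and none otherwise, for every $e\in E^1$ and $u\in F^0$ there is at most one $f\in F^1$ with $\phi^1(f)=e$ and $s_F(f)=u$. Hence $(F,\phi)$ is a representation graph for $E$, and $\phi^{-1}(v)=B_v$ by construction.

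\emph{The candidate isomorphism.} Since $\phi^{-1}(v)=B_v$, the space $\rho_{(F,\phi)}(v)=\sum_{u\in\phi^{-1}(v)}Ku$ is the vector space on the basis $B_v$, and $B_v$ is also a basis of $\rho(v)$. I would therefore let $\theta_v\colon\rho(v)\to\rho_{(F,\phi)}(v)$ be the linear isomorphism fixing each $x\in B_v$. To see that $\theta=(\theta_v)_v$ is a morphism in $\Rep(E)$, hence an isomorphism, it suffices to verify naturality $\theta_w\circ\rho(p)=\rho_{(F,\phi)}(p)\circ\theta_v$ for every path $p\colon v\to w$; as both composites are linear and $\theta$ fixes basis vectors, this reduces to showing $\rho(p)(x)=\rho_{(F,\phi)}(p)(x)$ for each $x\in B_v$.

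\emph{The key claim.} The heart of the proof is the following statement, which I would establish by induction on the length of $p$: for every path $p$ and every $x\in B_{s(p)}$, there exists a path $q\in\Path(F)$ with $\phi(q)=p$ and $s_F(q)=x$ if and only if $\rho(p)(x)\neq 0$, and in that case $r_F(q)=\rho(p)(x)$. Such a $q$ is automatically unique by Lemma~\ref{lem0.1}, so $\rho_{(F,\phi)}(p)(x)$ is well defined and the claim gives precisely $\rho_{(F,\phi)}(p)(x)=\rho(p)(x)$. The length-$0$ case is the trivial path, where $\rho(p)=\id$ and $q=x$. The length-$1$ case is immediate from the construction of $F^1$: the edge $f_{x,e}$ exists exactly when $\rho(e)(x)\neq 0$, with $r_F(f_{x,e})=\rho(e)(x)$, and $\phi$ preserves lengths so any lift of an edge is an edge. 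For the inductive step I would write $p=e\,p''$ with $e\in E^1$ and use functoriality of $\rho$, namely $\rho(p)(x)=\rho(p'')\big(\rho(e)(x)\big)$. If $\rho(e)(x)=0$ then $\rho(p)(x)=0$ and no lift of $p$ at $x$ can exist, since it would have to begin with a lift of $e$ at $x$; if $\rho(e)(x)=x'\in B_{s(p'')}$, then the unique edge $f_{x,e}$ lifts $e$, and concatenating it with the lift of $p''$ at $x'$ supplied by the induction hypothesis yields both the existence equivalence and the equality $r_F(q)=\rho(p'')(x')=\rho(p)(x)$, while the uniqueness from Lemma~\ref{lem0.1} rules out any other lift when $\rho(p)(x)=0$.

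Putting these together, $\theta$ is a natural isomorphism and $\rho\cong\rho_{(F,\phi)}$. I expect the main obstacle to be the bookkeeping in the inductive step: one must respect the categorical composition convention (so that a first-edge decomposition $p=e\,p''$ really gives $\rho(p)=\rho(p'')\circ\rho(e)$), and must invoke the ``at most one lift'' property of an immersion recorded in Lemma~\ref{lem0.1} at each stage, both to make $\rho_{(F,\phi)}(p)$ well defined and to exclude spurious lifts when $\rho(p)(x)=0$.
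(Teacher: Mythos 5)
Your construction of $(F,\phi)$ — vertices the disjoint union of the bases $B_v$, one edge $f_{x,e}$ for each pair with $\rho(e)(x)\neq 0$, and the isomorphism fixing basis vectors — is exactly the construction in the paper's proof; the paper leaves the naturality check to the reader, which you carry out correctly by induction on path length. The proposal is correct and takes essentially the same approach.
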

\begin{proof}
Define a representation graph $(F,\phi)$ by 
\begin{align*}
F^0&=\bigcup\limits_{u\in \Ob(E)}B_{u},\\
F^1&=\{f_{e,x}\mid e\in E^1,x\in B_{s(e)},\rho(e)(x)\neq 0\},\\
s(f_{e,x})&=x,\\
r(f_{e,x})&=\rho(e)(x),\\
\phi^0(x)&=v \text{ if }x\in B_v,\\
\phi^1(f_{e,x})&=e.
\end{align*}
Set $\rho':=\rho_{(F,\phi)}$. Recall that for any $v\in \Ob(E)$ we have $\rho'(v)=\sum_{x\in\phi^{-1}(v)}Kx=\sum_{x\in B_v}Kx$. Since $B_v$ is a basis for $\rho(v)$, there is an isomorphism $\eta_v:\rho'(v)\to \rho(v)$ such that $\eta_v(x)=x$ for any $x\in B_v$. We leave it to the reader to check that for any $p\in \Hom_E(v,w)$ the diagram
\[\xymatrix{\rho'(v)\ar[d]_{\rho'(e)}\ar[r]^{\eta_v}&\rho(v)\ar[d]^{\rho(e)}\\\rho'(w)\ar[r]^{\eta_w}&\rho(w)}\]
commutes, i.e. that $\eta:\rho'\to \rho$ is an isomorphism of functors.
\end{proof}

Next we consider graphs with relations. Let $E$ be a directed graph
and $r$ a set of relations in $E$. Call a path in $E$ {\it trivial} if it has positive length (i.e. it is not a vertex) and {\it nontrivial} otherwise. We assume that a coefficient $k_i\in K$ in a relation $\sum_ik_ip_i\in r$ equals $1$ if $p_i$ is nontrivial and $-1$ if $p_i$ is trivial (note that the defining relations of weighted and unweighted Leavitt path algebras satisfy this condition). Hence a relation $r$ is either of type
\[\sum\limits_{i=1}^np_i\tag{A}\]
where $n\in \N$, $k_1,\dots,k_n\in K^{\times}$ and $p_1,\dots,p_n\in\Hom_E(v,w)$ are pairwise distinct nontrivial paths, or of type 
\[\sum\limits_{i=1}^np_i-v\tag{B}\]
where $n\in \N$, $k_1,\dots,k_n\in K^{\times}$ and $p_1,\dots,p_n\in\Hom_E(v,v)$ are pairwise distinct nontrivial paths. 

Recall that a representation of $(E,r)$ is a functor $\rho: E \rightarrow \vc K$ such that 
$\sum_i \rho(p_i)=0$ for any relation $\sum_i p_i$ of type (A) and $\sum_i \rho(p_i)=\rho(v)$ for any relation $\sum\limits_{i=1}^np_i-v$ of type (B).

\begin{definition}\label{def0.1}
A representation graph $(F,\phi)$ for $E$ is called a {\it representation graph for $(E,r)$} if the following hold. 
\begin{enumerate}
\item If a path $p$ appears in a relation of type $(A)$, then there is no $q\in \Path(F)$ such that $\phi(q)=p$.
\smallskip

\item If $\sum\limits_{i=1}^np_i-v$ is a relation of type (B), then for any $u\in\phi^{-1}(v)$ there is precisely one $1\leq j\leq n$ such that there exists a $q\in \Path(F)$ for which $s(q)=u$ and $\phi(q)=p_j$. Moreover, $r(q)=u$.
\end{enumerate}
\end{definition}

Let $(F,\phi)$ be a representation graph for $(E,r)$. Then clearly the representation $\rho=\rho_{(F,\phi)}$ of $E$ defined in (\ref{ghghgdfd234}) is a representation for $(E,r)$. Moreover, there are linear bases $B_v\subseteq \rho(v)~(v\in \Ob(E))$ such that (\ref{stumih}),(\ref{stumih2}) and (\ref{stumih3}),(\ref{stumih4}) below are satisfied.

\begin{align}
&\text{If a path } p \text{ appears in a relation of type (A), then }\rho(p)=0. \label{stumih3}\\
&\text{If } \sum\limits_{i=1}^np_i-v  \text{ is a relation of type (B), then for any } x\in B_{v} \text{ there is a }1\leq j\leq n \text{ such that } \notag\\
&\rho(p_j)(x)=x \text{ and } \rho(p_i)(x)=0\text{ for any }i\neq j. \label{stumih4}
\end{align}
It follows from Theorem \ref{thm0.2} below that the representations $\rho_{(F,\phi)}$ where $(F,\phi)$ is a representation graph for $(E,r)$ cover precisely the representations $\rho$ of $(E,r)$ for which there are linear bases $B_v\subseteq \rho(v)~(v\in \Ob(E))$ such that (\ref{stumih})-(\ref{stumih4}) are satisfied.

\begin{theorem}\label{thm0.2}
Let $\rho$ be a representation of $(E,r)$ and $B_v\subseteq \rho(v)~(v\in \Ob(E))$ linear bases such that (\ref{stumih}),(\ref{stumih2}),(\ref{stumih3}) and (\ref{stumih4}) are satisfied. Then there is a representation graph $(F,\phi)$ for $(E,r)$ such that $\rho$ is isomorphic to $\rho_{(F,\phi)}$.
\end{theorem}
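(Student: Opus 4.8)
The plan is to reuse verbatim the representation graph $(F,\phi)$ constructed in the proof of Theorem~\ref{thm0.1}, and then to verify that, thanks to the two extra hypotheses (\ref{stumih3}) and (\ref{stumih4}), this $(F,\phi)$ also satisfies the two additional conditions of Definition~\ref{def0.1}. Concretely, I set $F^0=\bigcup_{u\in\Ob(E)}B_u$, put an edge $f_{e,x}$ from $x$ to $\rho(e)(x)$ whenever $e\in E^1$, $x\in B_{s(e)}$ and $\rho(e)(x)\neq 0$, and define $\phi^0(x)=v$ for $x\in B_v$, $\phi^1(f_{e,x})=e$. Theorem~\ref{thm0.1} already guarantees that $(F,\phi)$ is a representation graph for $E$ (so Lemma~\ref{lem0.1} applies), that $\rho\cong\rho_{(F,\phi)}$, and that the isomorphism $\eta:\rho_{(F,\phi)}\to\rho$ of functors fixes every basis vector.

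The first and key step is to upgrade the edge-level description of the construction to a path-level dictionary, valid for an arbitrary path $p$ rather than only a single edge. Since $\eta$ is an isomorphism of representations fixing the bases $B_v$, functoriality gives $\eta_w\circ\rho_{(F,\phi)}(p)=\rho(p)\circ\eta_v$, so for $x\in B_v$ and $p\in\Hom_E(v,w)$ one has $\rho(p)(x)=\eta_w(\rho_{(F,\phi)}(p)(x))$. By the defining formula for $\rho_{(F,\phi)}$ together with the uniqueness from Lemma~\ref{lem0.1}, the right-hand side equals $r(q)$ when there is a (necessarily unique) path $q\in\Path(F)$ with $s(q)=x$ and $\phi(q)=p$, and equals $0$ otherwise; moreover $r(q)\in F^0=\bigcup_u B_u$ is a basis vector, hence nonzero, and is fixed by $\eta_w$. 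This yields the dictionary: a path $q\in\Path(F)$ with $s(q)=x$ and $\phi(q)=p$ exists if and only if $\rho(p)(x)\neq 0$, and in that case $r(q)=\rho(p)(x)$.

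With this dictionary the two remaining conditions become immediate translations. For condition~(1), if a path $p$ occurs in a relation of type (A), then (\ref{stumih3}) gives $\rho(p)=0$, so $\rho(p)(x)=0$ for every $x$, whence by the dictionary no $q\in\Path(F)$ satisfies $\phi(q)=p$. For condition~(2), fix a relation $\sum_{i=1}^n p_i-v$ of type (B) and a vertex $u\in\phi^{-1}(v)=B_v$; by (\ref{stumih4}) there is exactly one index $j$ with $\rho(p_j)(u)=u$ and $\rho(p_i)(u)=0$ for $i\neq j$. The dictionary converts this into: there is exactly one $j$ admitting a path $q\in\Path(F)$ with $s(q)=u$ and $\phi(q)=p_j$, and for that $j$ one has $r(q)=\rho(p_j)(u)=u$. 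Hence both conditions of Definition~\ref{def0.1} hold, $(F,\phi)$ is a representation graph for $(E,r)$, and $\rho\cong\rho_{(F,\phi)}$ by Theorem~\ref{thm0.1}. The only delicate point is the second step, namely making rigorous the passage from edges to arbitrary paths; but this is exactly what is supplied by $\eta$ being a functor isomorphism that fixes the chosen bases, so once that dictionary is in place the verification of the relation-axioms is routine.
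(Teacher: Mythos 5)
Your proposal is correct and takes essentially the same route as the paper: the paper's proof simply reuses the graph $(F,\phi)$ from Theorem~\ref{thm0.1} and asserts that one checks easily that it is a representation graph for $(E,r)$. Your path-level dictionary (a lift $q$ of $p$ at $x$ exists iff $\rho(p)(x)\neq 0$, in which case $r(q)=\rho(p)(x)$) is precisely the verification the paper leaves to the reader, and your use of (\ref{stumih3}) and (\ref{stumih4}) to deduce the two conditions of Definition~\ref{def0.1} is sound.
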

\begin{proof}
Let $(F,\phi)$ be the representation graph for $E$ defined in the proof of Theorem \ref{thm0.1}. One checks easily that $(F,\phi)$ is a representation graph for $(E,r)$.
In the proof of Theorem \ref{thm0.1} it is shown that $\rho$ is isomorphic to $\rho_{(F,\phi)}$.
\end{proof}

\subsection{Irreducible representation graphs of a graph with relations}
Let $E$ be a graph with a set of relations $r$. Furthermore, let $(F,\phi)$ be a representation graph for $(E,r)$ and $\rho$ be the representation of $(E,r)$ defined by $(F,\phi)$. Let $V_F$ be the $K$-vector space with basis $F^0$. Define the algebra $A:=KE/\langle r \rangle$. Then $V_F$ becomes a right $A$ -module by defining
\[u.p=\begin{cases}r(q),&\text{ if }\exists q\in \Path(F) \text{ such that } \phi(q)=p \text{ and } s(q)=u,\\0,&\text{ otherwise},\end{cases}\] 
for any $p\in \Hom_E(v,w)$ and $u\in F^0$. 

In order to prove the main theorem of this subsection (Theorem~\ref{thm0.3}), we need the following lemma which is also used in the main text in the proof of Theorem~\ref{thmirr}. 

\begin{lemma}\label{lembasis}
Let $W$ be a $K$-vector space and $B$ a linearly independent subset of  $W$. Let $k_i\in K$ and $u_i,v_i\in B$, where $1\leq i \leq n$. Then $\sum_{s=1}^nk_s(u_s-v_s)\not\in B$.
\end{lemma}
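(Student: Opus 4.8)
The plan is to exploit the \emph{sum-of-coefficients} (augmentation) functional attached to the linearly independent set $B$. Since $B$ is linearly independent, every element $w$ of the subspace $\operatorname{span}_K(B)$ has a \emph{unique} expression $w=\sum_{b\in B}c_b\,b$ with only finitely many $c_b\in K$ nonzero; consequently the map
\[
\epsilon:\operatorname{span}_K(B)\longrightarrow K,\qquad \epsilon\Big(\sum_{b\in B}c_b\,b\Big)=\sum_{b\in B}c_b,
\]
is well defined and $K$-linear. This is the only place where the linear independence of $B$ is genuinely used, and verifying its well-definedness is the crux (indeed the sole technical point) of the argument.

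Next I would evaluate $\epsilon$ on the element of interest. For each index $s$ one has $\epsilon(u_s-v_s)=1-1=0$, and this also covers the degenerate case $u_s=v_s$, where $u_s-v_s=0$ and $\epsilon(0)=0$. By $K$-linearity it follows that
\[
\epsilon\Big(\sum_{s=1}^n k_s(u_s-v_s)\Big)=\sum_{s=1}^n k_s\,\epsilon(u_s-v_s)=0.
\]

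Finally I would conclude by contradiction. Suppose $\sum_{s=1}^n k_s(u_s-v_s)=b$ for some $b\in B$. Applying $\epsilon$ and using that $b$ is a single basis vector (so its unique expansion has sole coefficient $1$) yields $0=\epsilon(b)=1$, which is impossible because $1\neq 0$ in the field $K$. Hence no such $b$ exists, that is, $\sum_{s=1}^n k_s(u_s-v_s)\notin B$.

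I expect no serious obstacle here: the statement is a clean consequence of the fact that the augmentation of a difference of basis vectors vanishes while the augmentation of any single basis vector equals $1$. The only subtlety to be careful about is that $\epsilon$ is well defined precisely because $B$ is linearly independent, and that the final contradiction relies on $1\neq 0$ in $K$; an equivalent formulation, should one prefer to avoid introducing $\epsilon$ explicitly, is simply to collect the net coefficient of each $b\in B$ in $\sum_s k_s(u_s-v_s)$ and observe that these coefficients sum to $0$, whereas the coefficients of $b\in B$ sum to $1$.
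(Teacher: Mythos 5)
Your proof is correct. The underlying idea is the same as the paper's --- both arguments come down to the observation that the sum of the $B$-coordinates of $\sum_s k_s(u_s-v_s)$ is $0$, whereas the sum of the $B$-coordinates of any element of $B$ is $1\neq 0$ --- but your execution is genuinely cleaner. The paper first discards the indices with $u_s=v_s$, regroups the expression over the distinct vertices $w_1,\dots,w_m$ as $\sum_{i<j}l_{ij}(w_i-w_j)$, and then proves by an explicit induction on $m$ that the total coefficient $\sum_i\bigl(\sum_{j>i}l_{ij}-\sum_{j<i}l_{ji}\bigr)$ vanishes. You replace all of that bookkeeping with the augmentation functional $\epsilon$ on $\operatorname{span}_K(B)$, whose well-definedness is exactly linear independence of $B$ and whose $K$-linearity makes the vanishing of the total coefficient immediate from $\epsilon(u_s-v_s)=0$. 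You also correctly flag the two points where care is needed: well-definedness of $\epsilon$ (this is where linear independence enters) and $1\neq 0$ in $K$ (automatic in a field; note also that $0\notin B$ since $B$ is linearly independent, so $\epsilon(b)=1$ for every $b\in B$ without exception). What your route buys is brevity and transparency; what the paper's route buys is nothing extra here --- its induction is just an unpackaged verification of the linearity you invoke abstractly.
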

\begin{proof}
Clearly we may assume that $u_s\neq v_s$ for any $1\leq s\leq n$. Moreover, we may assume that $n\geq 2$. Let $w_1,\dots,w_m$ be the distinct elements of the set $\{u_s,v_s\mid 1\leq s\leq n\}$. Clearly there are $l_{ij}\in K~(1\leq i<j\leq m)$ such that 
\begin{equation}\label{gdhhh1}
\sum\limits_{s=1}^nk_s(u_s-v_s)=\sum\limits_{1\leq i<j\leq m}l_{ij}(w_i-w_j).
\end{equation}
One checks easily that 
\begin{equation}\label{gdhhh2}
\sum\limits_{1\leq i<j\leq m}l_{ij}(w_i-w_j)=\sum\limits_{1\leq i\leq m}(\sum\limits_{i<j\leq m}l_{ij}-\sum\limits_{1\leq j<i}l_{ji})w_i.
\end{equation}
We prove by induction on $m$ that 
\begin{equation}\label{gdhgdthe}
\sum\limits_{1\leq i\leq m}(\sum\limits_{i<j\leq m}l_{ij}-\sum\limits_{1\leq j<i}l_{ji})=0.
\end{equation}

Case $m=2$: Clearly $\sum\limits_{1\leq i\leq 2}(\sum\limits_{i<j\leq 2}l_{ij}-\sum\limits_{1\leq j<i}l_{ji})=l_{12}-l_{12}=0$ as desired.\\

\medskip

Case $m\rightarrow m+1$: Clearly
\begin{align*}
&\sum\limits_{1\leq i\leq m+1}(\sum\limits_{i<j\leq m+1}l_{ij}-\sum\limits_{1\leq j<i}l_{ji})\\
=&\sum\limits_{1\leq i\leq m}(\sum\limits_{i<j\leq m+1}l_{ij}-\sum\limits_{1\leq j<i}l_{ji})- \sum\limits_{1\leq j<m+1}l_{j,m+1}\\
=&\sum\limits_{1\leq i\leq m}(\sum\limits_{i<j\leq m}l_{ij}-\sum\limits_{1\leq j<i}l_{ji})+\sum\limits_{1\leq i\leq m}l_{i,m+1}- \sum\limits_{1\leq j<m+1}l_{j,m+1}\\
=&\sum\limits_{1\leq i\leq m}(\sum\limits_{i<j\leq m}l_{ij}-\sum\limits_{1\leq j<i}l_{ji})=0
\end{align*}
by the induction assumption. Hence (\ref{gdhgdthe}) holds true. Now suppose that $\sum\limits_{s=1}^nk_s(u_s-v_s)\in B$. Then, in view of (\ref{gdhhh1}) and (\ref{gdhhh2}), precisely one of the coefficients \[\sum\limits_{i<j\leq m}l_{ij}-\sum\limits_{1\leq j<i}l_{ji}~(1\leq i\leq m)\] equals $1$ and the remaining coefficients are $0$. Hence the sum of the coefficients equals $1$ which contradicts (\ref{gdhgdthe}). This completes the proof. 
\end{proof}

Recall that $q\neq q'\in \Path(F)$ and $\phi(q)=\phi(q')$ implies $s(q)\neq s(q')$ by Lemma \ref{lem0.1}. We need a definition. 
\begin{definition}
Let $E$ be a graph with the set of relations $r$ and $(F,\phi)$ a representation for $(E,r)$. We say $(F,\phi)$ is \emph{ well-behaved} if $q\neq q'\in \Path(F)$ and $\phi(q)=\phi(q')$ implies $r(q)\neq r(q')$. We call $F$ {\it strongly connected} if none of the sets ${}_{v}\!\Path_w(F)~(v,w\in F^0)$ is empty.
\end{definition}

\begin{theorem}\label{thm0.3} Let $E$ be a graph with the set of relations $r$ and let $A=KE/\langle r\rangle$ be the $K$-algebra associated to  $(E,r)$. Further suppose $(F,\phi)$ is a well-behaved representation for $(E,r)$. Then the following are equivalent.
\begin{enumerate}[\upshape(i)]
\item The $A$-module $V_F$ is simple.
\smallskip
\item $F$ is strongly connected and for any $x\in V_F\setminus\{0\}$ there is an $a\in A$ and a $v\in F^0$ such that $x.a=v$.
\smallskip

\item $F$ is strongly connected and for any $x\in V_F\setminus\{0\}$ there is a $k\in K$, a $p\in \Path(E)$ and a $v\in F^0$ such that $x.kp=v$.
\smallskip

\item $F$ is strongly connected and $\phi({}_u\!\Path(F))\neq \phi({}_v\!\Path(F))$, for any $u\neq v\in F^0$.
\end{enumerate}
\end{theorem}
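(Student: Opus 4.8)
Theorem \ref{thm0.3} is the analogue, for path algebras with relations, of the earlier Theorem~\ref{thmirr} characterizing when $V_F$ is simple. So my plan is to mirror the cyclic structure of that proof, (i) $\Rightarrow$ (iv) $\Rightarrow$ (iii) $\Rightarrow$ (ii) $\Rightarrow$ (i), adapting each implication to the present setting where $F$ is a well-behaved representation graph for $(E,r)$, the algebra is $A = KE/\langle r\rangle$, and paths live in $\Path(F)$ (not the doubled graph $\Path(F_d)$). The key technical inputs I expect to use are Lemma~\ref{lem0.1} (distinct paths with the same image have distinct sources) together with the well-behaved hypothesis (distinct paths with the same image have distinct ranges), and crucially Lemma~\ref{lembasis}, which says a nontrivial $K$-combination of differences $\sum_s k_s(u_s - v_s)$ of basis elements can never itself be a single basis element.

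Let me sketch the implications. For (i) $\Rightarrow$ (iv): if some ${}_v\!\Path_w(F)$ were empty I would split off the submodule spanned by the vertices reachable from a fixed vertex, contradicting simplicity, giving strong connectedness; then, assuming $\phi({}_u\!\Path(F)) = \phi({}_v\!\Path(F))$ for distinct $u,v$, I form the submodule $(u-v)A$, which by simplicity is all of $V_F$, so $(u-v)a = v$ for some $a$; writing $a = \sum_s k_s p_s$ with distinct paths $p_s$ and discarding terms that annihilate $u-v$, the well-behaved hypothesis guarantees $(u-v)p_s = u_s - v_s$ with $u_s \neq v_s$, so $v = \sum_s k_s(u_s - v_s)$, contradicting Lemma~\ref{lembasis}. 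For (iv) $\Rightarrow$ (iii): given $x = \sum_{s=1}^n k_s v_s \neq 0$, if $n=1$ a suitable vertex idempotent extracts a single basis vector; if $n > 1$, the condition $\phi({}_{v_1}\!\Path(F)) \neq \phi({}_{v_2}\!\Path(F))$ lets me pick a path separating $v_1$ from $v_2$, strictly reducing the number of nonzero terms, and I iterate down to a scalar multiple of one vertex. The implication (iii) $\Rightarrow$ (ii) is trivial, and (ii) $\Rightarrow$ (i) uses strong connectedness: any nonzero $x$ yields some vertex $v = x\cdot a$ inside a given submodule $U$, and strong connectedness plus the path action sweep every other vertex into $U$, forcing $U = V_F$.

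The subtlety I anticipate, and the place where I must be careful, is in the bookkeeping of (i) $\Rightarrow$ (iv). In the unweighted/doubled-graph proof of Theorem~\ref{thmirr} one worked in $\Path(\hat E_d)$ and had both Lemma~\ref{lemwell}-type injectivity on the left and on the right; here the action is by paths of $\Path(E)$ (no ghost edges), so I must confirm that $(u-v)p_s$ being nonzero really forces \emph{both} $u p_s$ and $v p_s$ nonzero and distinct. This is exactly where well-behavedness enters: if $u p_s = u_s$ and $v p_s = v_s$ with $u_s = v_s$, then two distinct paths (the lifts of $p_s$ at $u$ and at $v$) would share the same range, contradicting the well-behaved condition — so $u_s \neq v_s$ and the hypotheses of Lemma~\ref{lembasis} apply. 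Once this point is nailed down, the remaining implications are routine adaptations of the arguments already given for Theorem~\ref{thmirr}, with $\Path(F)$ in place of $\Path(F_d)$ and strong connectedness in place of ordinary connectedness.
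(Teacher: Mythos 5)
Your proposal is correct and follows essentially the same route as the paper's proof: the same cyclic chain (i)$\Rightarrow$(iv)$\Rightarrow$(iii)$\Rightarrow$(ii)$\Rightarrow$(i), the same use of $(u-v)A$ together with Lemma~\ref{lembasis} for (i)$\Rightarrow$(iv), and the same term-reduction argument for (iv)$\Rightarrow$(iii). The subtlety you flag about well-behavedness forcing $u.p_s$ and $v.p_s$ to be distinct is exactly the point the paper relies on.
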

\begin{proof} 
(i)$\Longrightarrow$ (iv) Suppose there are $v,w\in F^0$ such that ${}_{v}\!\Path_w(F)=\emptyset$. Then clearly $w\not\in v.A$. Hence $v.A$ is a proper submodule of $V_F$ and therefore $V_F$ is not simple. Thus $F$ must be strongly connected. Now assume there are $u\neq v\in F^0$ such that $\phi({}_u\!\Path(F))= \phi({}_v\!\Path(F))$. Consider the submodule $(u-v)A\subseteq V_F$. Since $V_F$ is simple by assumption, we have $(u-v)A=V_F$. Hence there is an $a\in A$ such that $(u-v).a=v$. Clearly there is an $n\geq 1$, $k_1,\dots,k_n\in K^\times$ and pairwise distinct $p_1,\dots,p_n\in \Path(E)$ such that $a=\sum_{s=1}^nk_sp_s$. We may assume that $(u-v).p_s\neq 0$ for any $1\leq s\leq n$. Hence $p_s\in \phi({}_u\!\Path(F))= \phi({}_v\!\Path(F))$ for any $s$. Since $(F,\phi)$ is well-behaved, we have $(u-v).p_s=u_s-v_s$ for some distinct $u_s,v_s\in F^0$. Hence 
\[v=(u-v).a=(u-v).(\sum_{s=1}^nk_sp_s)=\sum_{s=1}^nk_s(u_s-v_s)\]
which contradicts Lemma~\ref{lembasis}.

\medskip 
(iv)$\Longrightarrow$ (iii). Let $x=V_F\setminus \{0\}$. Then there is an $n\geq 1$, pairwise disjoint $v_1,\dots, v_n\in F^0$ and $k_1,\dots,k_n\in K^\times$ such that $x=\sum_{s=1}^nk_sv_s$. If $n=1$, then $x.k_1^{-1}\phi(v_1)= v_1$. Suppose now that $n>1$. Since by assumption (iv) holds, we may assume that there is a $p_1\in \phi({}_{v_1}\!\Path(F))$ such that $p_1\not\in\phi({}_{v_2}\!\Path(F))$. Since $(F,\phi)$ is well-behaved, $x.p_1\neq 0$ is a linear combination of at most $n-1$ vertices from $F^0$. Proceeding like that we obtain paths $p_1,\dots,p_m$ such that $x.p_1\dots p_m=kv$ for some $k\in K^\times$ and $v\in F^0$. Hence $x.p_1\dots p_mk^{-1}=v$.

\medskip 

(iii)$\Longrightarrow$ (ii). Trivial.

\medskip 

(ii)$\Longrightarrow$ (i). Let $U\subseteq V_F$ be a nonzero $A$-submodule and $x\in U\setminus\{0\}$. Since by assumption (ii) is satisfied, there is an $a\in A$ and a $v\in F^0$ such that $v=x.a\in U$. Let now $v'$ be an arbitrary vertex in $F^0$. Since by assumption $F$ is strongly connected, there is a $p\in {}_{v}\!\Path_{v'}(F)$. Hence $v'=v.\phi(p)\in U$. Hence $U$ contains $F^0$ and thus $U=V_F$.
\end{proof}

\section*{Acknowledgments}
We thank John Meakin who brought Stallings' paper~\cite{stallings} to our attention. The initial discussion on this work started in 2015 in the University of Bielefeld, where Hazrat was a Humboldt Fellow and Shchegolev was a PhD student supported by the Deutscher Akademischer Austauschdienst (DAAD). The subsequent work was done while Preusser was a PostDoc fellow at the St. Petersburg State University supported by the Russian Science Foundation grant 19-71-30002.

\end{document}